\documentclass{amsart}

\usepackage{amsmath}
\usepackage{amsxtra}
\usepackage{amscd}
\usepackage{amsthm}
\usepackage{amsfonts}
\usepackage{amssymb}
\usepackage{mathtools}
\usepackage{eucal}
\usepackage{graphicx}
\usepackage{mathrsfs}
\usepackage{color}
\usepackage{caption}
\usepackage{enumerate}
\usepackage{faktor}
\usepackage{bbm}
\usepackage{hyperref}
\usepackage[T1]{fontenc} 
\usepackage[utf8]{inputenc} 
\usepackage{tikz}
\usepackage{tikz-cd}
\usepackage[margin=1in]{geometry}
\usepackage{xcolor}
\usepackage{enumitem}

\hypersetup{
  colorlinks = true,
  urlcolor = cyan,
}

\theoremstyle{plain}
\newtheorem{theorem}{Theorem}[section]
\newtheorem{lemma}[theorem]{Lemma}
\newtheorem{proposition}[theorem]{Proposition}

\newtheorem{claim}[theorem]{Claim}

\theoremstyle{definition}
\newtheorem{definition}[theorem]{Definition}
\newtheorem{conjecture}[theorem]{Conjecture}

\theoremstyle{remark}
\newtheorem{remark}[theorem]{Remark}

\numberwithin{equation}{section}

\newcommand{\E}{\mathbb{E}}
\newcommand{\F}{\mathbb{F}}
\newcommand{\N}{\mathbb{N}}
\newcommand{\Q}{\mathbb{Q}}

\newcommand{\Z}{\mathbb{Z}}
\renewcommand{\P}{\mathbb{P}}
\newcommand{\ol}{\overline}

\newcommand{\Aut}{{\mathrm{Aut}}}
\newcommand{\Hom}{{\mathrm{Hom}}}
\DeclareMathOperator{\GL}{GL}
\DeclareMathOperator{\cok}{cok}
\DeclareMathOperator{\Sur}{Sur}
\DeclareMathOperator{\Cl}{Cl}
\DeclareMathOperator{\M}{M}

\newcommand{\verts}[1]{\mathopen{}\left\lvert#1\right\rvert\mathclose{}}
\newcommand{\floor}[1]{\mathopen{}\left\lfloor#1\right\rfloor\mathclose{}}
\newcommand{\ceil}[1]{\mathopen{}\left\lceil#1\right\rceil\mathclose{}}

\makeatletter
\AtBeginDocument
 {
    \def\@thm#1#2#3{%
      \ifhmode
        \unskip\unskip\par
      \fi
      \normalfont
      \trivlist
      \let\thmheadnl\relax
      \let\thm@swap\@gobble
      \let\thm@indent\indent 
      \thm@headfont{\scshape}
      \thm@notefont{\fontseries\mddefault\upshape}%
      \thm@headpunct{.}
      \thm@headsep 5\p@ plus\p@ minus\p@\relax
      \thm@space@setup
      #1
      \@topsep \thm@preskip               
      \@topsepadd \thm@postskip           
      \def\dth@counter{#2}%
      \ifx\@empty\dth@counter
        \def\@tempa{%
          \@oparg{\@begintheorem{#3}{}}[]%
        }%
      \else
        \H@refstepcounter{#2}%
        \hyper@makecurrent{#2}%
        \let\Hy@dth@currentHref\@currentHref
        \AddToHookNext{para/begin}{\MakeLinkTarget*{\Hy@dth@currentHref}}%
        \def\@tempa{%
          \@oparg{\@begintheorem{#3}{\csname the#2\endcsname}}[]%
        }%
      \fi
      \@tempa
    }%
  \dth@everypar={%
    \@minipagefalse \global\@newlistfalse
    \@noparitemfalse
    \if@inlabel
      \global\@inlabelfalse
      \begingroup \setbox\z@\lastbox
       \ifvoid\z@ \kern-\itemindent \fi
      \endgroup
      \unhbox\@labels
    \fi
    \if@nobreak \@nobreakfalse \clubpenalty\@M
    \else \clubpenalty\@clubpenalty \everypar{}%
    \fi
  }%
}

\newcommand{\starpar}[1]{%
  \par\vspace{\abovedisplayskip}%
  \noindent
  \makebox[1cm][l]{$(\star)$}%
  \parbox[t]{\dimexpr\linewidth-2cm}{#1}%
  \par\vspace{\belowdisplayskip}%
}

\title{Universality for cokernels of partially random integral matrices}

\author{Isaac Rajagopal}
\address{Massachusetts Institute of Technology}
\email{isaacraj@mit.edu}

\date{\today}

\keywords{Cohen--Lenstra distribution, cokernels, random matrices, universality, band matrices, $p$-adic numbers}

\begin{document}

\begin{abstract}
    Given any $\varepsilon > 0$, let $M(n)$ be a random $n \times (n+u)$ matrix over $\mathbb{Z}_p$, with all entries independent and $\varepsilon$-balanced (lying in each residue class mod $p$ with probability at most $1-\varepsilon$). Wood \cite{Wood15} proved that as $n \to \infty$ the distribution of $\mathrm{cok}(M(n))$ approaches Cohen and Lenstra's conjectured distribution of class groups. Given $\alpha,\beta >0$ such that $\alpha + \beta <1$, we prove that the distribution of $\mathrm{cok}(M(n))$ still approaches the Cohen--Lenstra distribution even if we weaken the hypothesis by allowing up to $\alpha n$ entries per column and up to $\beta n$ entries per row of $M(n)$ to not be $\varepsilon$-balanced. We also weaken the independence condition by allowing certain types of dependence between the entries of each column. In addition, we prove that, for any $\delta > 0$, the cokernels of random band matrices of width $\log(n)^{1+\delta}$ with $\varepsilon$-balanced entries in the band and arbitrary entries outside of it will also approach the Cohen--Lenstra distribution, which answers a question of Kang--Lee--Yu.
\end{abstract}

\maketitle

\section{Introduction}
The Cohen--Lenstra heuristics \cite{CL84} are conjectures which describe the distribution of Sylow $p$-subgroups of class groups of quadratic number fields. For a finite abelian group $G$, let $G_p$ denote its Sylow $p$-subgroup. 
\begin{conjecture}[Cohen--Lenstra \cite{CL84}]\label{conjCL}
    Let $S_X^+$ (resp. $S_X^-$) denote the set of positive (resp. negative) fundamental discriminants $D$ with $|D| < X$. Let $p$ be an odd prime and $B$ a finite abelian $p$-group. Then \[\lim_{X \to \infty} \frac{\#\{D \in S_X^{\pm}: \Cl(\Q(\sqrt{D}))_p \simeq B\}}{|S_X^\pm|} = \frac{\prod_{k=1}^\infty (1-p^{-k-u})}{|B|^{u}|\Aut(B)|},\] where $u = 0$ if we consider $S_X^-$ and $u =1$ if we consider $S_X^+$.
\end{conjecture}

This question naturally leads to the study of cokernels of random matrices over $\Z_p$, because $\Cl(\Q(\sqrt{D}))_p$ can be written as a cokernel $\cok(R_D) \coloneq \Z_p^n/R_D\Z_p^{n+u}$ for some matrix $R_D \in \M_{n \times (n+u)}(\Z_p)$ (cf. \cite{Wood15}). Friedman and Washington \cite{FW89} proved that, if $H(n)$ is a random matrix valued in $\M_{n \times n}(\Z_p)$ with independent random entries distributed according to the Haar measure on $\Z_p$, then as $n \to \infty$ the distribution of $\cok(H(n))$ converges to the distribution in Conjecture~\ref{conjCL} with $u = 0$. Wood \cite{Wood15} showed that a much weaker condition on the distribution of each entry, called being $\varepsilon$-balanced, was sufficient.

\begin{definition}
    Let $p$ be a prime, $\varepsilon >0$ a real number, and $X$ a random variable valued in $\Z_p$ or a finite quotient of $\Z_p$. Then $X$ is \emph{$\varepsilon$-balanced} if $\P(X \equiv t \pmod{p}) \leq 1- \varepsilon$ for all $t \in \Z/p\Z$. Otherwise, $X$ is \emph{$\varepsilon$-degenerate}.
\end{definition}

\begin{theorem}{\cite{Wood15}}\label{thmWood}
    Let $p$ be a prime, $u \geq 0$ an integer, $\varepsilon > 0$ a real number, and $B$ a finite abelian $p$-group. For each positive integer $n$, let $M(n)$ be a random matrix valued in $\M_{n \times (n+u)}(\Z_p)$ with independent entries $M(n)_{ij}$ which are all $\varepsilon$-balanced. Then \[\lim_{n \to \infty} \P(\cok(M(n)) \simeq B) = \frac{\prod_{k=1}^\infty (1-p^{-k-u})}{|B|^{u}|\Aut(B)|}.\]
\end{theorem}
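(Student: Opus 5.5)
We prove this by the moment method. For a finite abelian $p$-group $G$, the $G$-moment of $\cok(M(n))$ is $\E\,\#\Sur(\cok(M(n)),G)$. We will show that it converges to $|G|^{-u}$ for every $G$. These are the moments of the Cohen--Lenstra distribution with parameter $u$. Since they grow only like $|G|^{-u}\le 1$, they determine the distribution uniquely, and convergence of moments implies convergence of the distribution. This is the robust moment theorem of Wood (the moments $O(|\wedge^2 G|)$ case); for $u\ge0$ it even follows from simpler uniqueness results. Because $\cok(M(n))\otimes \Z/a$ only depends on $M(n)$ mod $a$ with $a$ the exponent of $G$, we may work over $R=\Z/a\Z$, $a=p^e$.

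\textbf{Step 1: reduce to sums over maps.} Surjections $\cok(M)\to G$ correspond bijectively to surjections $F\colon R^n\to G$ with $F M=0$. Let $v_1,\dots,v_n$ be the standard basis of $R^n$. Then
\[\E\,\#\Sur(\cok M,G)=\sum_{F\in\Sur(R^n,G)}\P(FM=0)=\sum_F\prod_{j=1}^{n+u}\P(F M_{*j}=0).\]
The product comes from column independence. For each column, $F M_{*j}=\sum_i M_{ij}F(v_i)$. By Fourier analysis on $G$,
\[\P(FM_{*j}=0)=|G|^{-1}\sum_{\chi\in \hat G}\prod_i \E\,\chi(M_{ij}F(v_i)).\]

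\textbf{Step 2: split $F$ into codes and non-codes.} Following Wood, call $F$ a \emph{code of distance $w$} if $F$ restricted to the coordinates outside any set $\sigma$ with $|\sigma|<w$ is still surjective. Take $w=\delta n$.

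For codes, every nontrivial $\chi$ has $\chi\circ F(v_i)$ nontrivial for at least $w$ indices $i$. For each such $i$, $\varepsilon$-balancedness gives $|\E\,\chi(M_{ij}F(v_i))|\le 1-c\varepsilon$; this is where balancedness mod $p$ enters. Hence $\P(FM_{*j}=0)=|G|^{-1}(1+O(e^{-c\varepsilon \delta n}))$. Multiplying over the $n+u$ columns and summing over the roughly $|G|^n$ surjections gives a main term $|G|^{-u}$, with error $o(1)$ once the $|G|^n e^{-c\varepsilon\delta n}$-type losses are controlled. More precisely, the per-column error must be $O(|G|e^{-c\varepsilon\delta n})$ in a form where the product over $n$ columns of $1+$error is still $1+o(1)$; that requires $e^{-c\varepsilon\delta n}n\to0$, which holds.

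\textbf{Step 3: non-codes (the main obstacle).} For non-codes we use the depth stratification. Such an $F$ restricts on a large coordinate set to a proper subgroup $H<G$ with $[G:H]=D$. The number of such $F$ is at most about $\binom{n}{\lceil\delta n\rceil}|H|^n|G|^{\delta n}$. For each of them we need a bound of the form $\P(FM=0)\le C\,e^{-\varepsilon n}|H|^{-n}$. To get it, condition on the entries in the small coordinate set; the remaining coordinates form a code into $H$ of suitable distance. Iterating over the chain of subgroups, with $\delta$ chosen small relative to $\varepsilon$ and $|G|$, makes the total contribution $\sum_D\binom{n}{\delta n}|G|^{\delta n}D^{-n}(1-\varepsilon)^{\ldots}=o(1)$. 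This bookkeeping is the hard part: the depth of $F$ must be chosen carefully so that the combinatorial count is beaten by the probability bound, with the required $D^{-n+\ldots}$ savings.

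Combining Steps 2 and 3 gives $\E\#\Sur(\cok M(n),G)\to|G|^{-u}$ for every $G$. The moment uniqueness theorem then yields the stated limit $\prod_k(1-p^{-k-u})/(|B|^u|\Aut B|)$.
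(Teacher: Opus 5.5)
Your outline is correct and follows the paper's route: it is Wood's argument, which the paper generalizes through Lemmas~\ref{lemmacodes}, \ref{lemmadepthprobabilitycolumns}, \ref{lemmanoncodes} and Proposition~\ref{mainprop} (column independence, a Fourier bound for codes, and for non-codes a count of maps sending all but a few coordinates into a proper subgroup $H$, played against a per-column probability bound), with your iterated chain of subgroups doing the work of the paper's maximal depth. The one point you leave implicit in Step 3 is where the $e^{-\varepsilon n}$ comes from: since $F$ is surjective but $F(V_{\setminus\sigma})=H$, some $k\in\sigma$ has $F(v_k)\notin H$, so in every column the $\sigma$-part lies in $H$ with probability at most $1-\varepsilon$ (the proper-robust-image factor of Lemma~\ref{lemmadepthprobabilitycolumns}, automatic here because $\tau=\emptyset$); consequently the non-code contribution is $\sum_H\binom{n}{O(\delta n)}|G|^{O(\delta n)}(1-\varepsilon)^{n}$ with no factor $D^{-n}$, because the $D^{-n}$ in the count cancels the $D^{n}$ in the probability, and this is exactly why $\delta$ must be small relative to $\varepsilon$.
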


Unfortunately, Theorem \ref{thmWood} does not resolve Conjecture \ref{conjCL} because the matrix $R_D\in \M_{n \times (n+u)}(\Z_p)$ for a random fundamental discriminant $D$ does not necessarily have $\varepsilon$-balanced or independent entries. In this paper, we will weaken the assumptions in Theorem \ref{thmWood}, by allowing many entries $M(n)_{ij}$ to be $\varepsilon$-degenerate (See Section \ref{sectindependent}) and additionally allowing dependence between the entries in each column (See Section~\ref{sectdependent}). In Section \ref{sectindependent}, we have two main results: Theorem \ref{mainthm} describes cokernels of matrices where $51\%$ of the entries in every row and every column are $\varepsilon$-balanced and Theorem \ref{theorembandmatrix} describes cokernels of band matrices of width $\log(n)^{1+\delta}$ with $\varepsilon$-balanced entries in the band and arbitrary entries outside it, and is a universal version of \cite{M24}. Section~\ref{sectdependent} contains Theorem \ref{thmdependent}, which generalizes Theorem \ref{mainthm} by allowing dependence between the entries of each column of $M(n)$ such that the entries in each column become independent after a basis change. These theorems show that many new random matrix ensembles belong to the universality class of matrices whose cokernels approach the Cohen--Lenstra distribution.

\subsection{Matrices with some \texorpdfstring{$\varepsilon$-}{}degenerate entries and independent entries.}\label{sectindependent} We state our general theorem about matrices with independent entries where many entries are $\varepsilon$-degenerate.

\begin{figure}
    \centering
\begin{minipage}{0.3\textwidth}
    \centering
    \begin{tikzpicture}[scale=0.8]
    \useasboundingbox (-0.8, -1.2) rectangle (3.3, 3.8);
  \draw[thick]
    (-0.1, 3)
    .. controls (-0.2, 2.6) .. (-0.2, 2.3)
    -- (-0.2, 0.7)
    .. controls (-0.2, 0.4) .. (-0.1, 0.0);

  \draw[thick]
    (3.1, 3)
    .. controls (3.2, 2.6) .. (3.2, 2.3)
    -- (3.2, 0.7)
    .. controls (3.2, 0.4) .. (3.1, 0.0);

  \fill[blue!25]
    (0.0, 0) -- (0.0,2.01) -- (1.98, 2.01) -- (1.98,3) --(3,3)--(3,0) -- cycle;

  \draw[<->, >=Stealth, thin]
    (0, 3.2) -- (1.98, 3.2)
    node[midway, above] {$0.66n$};

 \draw[<->, >=Stealth, thin]
    (-0.4,2.01) -- (-0.4, 3)
    node[midway, left] {$0.33n$};

\node at (1.5, -1) {$(a)$};

\end{tikzpicture}
    \end{minipage}
\hfill
\begin{minipage}{0.3\textwidth}
    \centering
    \begin{tikzpicture}[scale=0.8]
    \useasboundingbox (-0.9, -1.2) rectangle (4.2, 3.8);
  \draw[thick]
    (-0.1, 3)
    .. controls (-0.2, 2.6) .. (-0.2, 2.3)
    -- (-0.2, 0.7)
    .. controls (-0.2, 0.4) .. (-0.1, 0.0);

  \draw[thick]
    (3.1, 3)
    .. controls (3.2, 2.6) .. (3.2, 2.3)
    -- (3.2, 0.7)
    .. controls (3.2, 0.4) .. (3.1, 0.0);

\fill[blue!25]
    (0,3) -- (0.0,1.47) -- (1.47, 1.47) -- (1.47,0) -- (3,0) -- (3,1.53) -- (1.53,1.53) -- (1.53,3) -- cycle;

  \draw[<->, >=Stealth, thin]
    (1.47, -0.2) -- (3, -0.2)
    node[midway, below] {$0.51n$};

  \draw[<->, >=Stealth, thin]
    (0, 3.2) -- (1.53, 3.2)
    node[midway, above] {$0.51n$};

 \draw[<->, >=Stealth, thin]
    (3.4, 0) -- (3.4, 1.53)
    node[midway, right] {$0.51n$};

 \draw[<->, >=Stealth, thin]
    (-0.4,1.47) -- (-0.4, 3)
    node[midway, left] {$0.51n$};

\node at (1.5, -1) {$(b)$};

\end{tikzpicture}
    \end{minipage}
\hfill
\begin{minipage}{0.3\textwidth}
    \centering
    \begin{tikzpicture}[scale=0.8]
    \useasboundingbox (-0.5, -1.2) rectangle (4.5, 3.8);
  \draw[thick]
    (-0.1, 3)
    .. controls (-0.2, 2.6) .. (-0.2, 2.3)
    -- (-0.2, 0.7)
    .. controls (-0.2, 0.4) .. (-0.1, 0.0);

  \draw[thick]
    (3.1, 3)
    .. controls (3.2, 2.6) .. (3.2, 2.3)
    -- (3.2, 0.7)
    .. controls (3.2, 0.4) .. (3.1, 0.0);

  \fill[blue!25]
    (0.0, 2.22) -- (2.22, 0.0) -- (3, 0.0) -- (3, 0.78) --
    (0.78, 3) -- (0.0, 3) -- cycle;

 \fill[blue!25]
    (0.0, 0.78) -- (0.0,0.0)--(0.78,0) -- cycle;

 \fill[blue!25]
    (3, 2.22) -- (3,3)--(2.22,3) -- cycle;

  \draw[<->, >=Stealth, thin]
    (0.0, -0.2) -- (0.78, -0.2)
    node[midway, below] {$0.26n$};

 \draw[<->, >=Stealth, thin]
    (2.22, -0.2) -- (3, -0.2)
    node[midway, below] {$0.26n$};

 \draw[<->, >=Stealth, thin]
    (3.4, 0) -- (3.4, 0.78)
    node[midway, right] {$0.26n$};

  \draw[<->, >=Stealth, thin]
    (3.4, 2.22) -- (3.4, 3)
    node[midway, right] {$0.26n$};

\node at (1.5, -1) {$(c)$};

\end{tikzpicture}
    \end{minipage}
    \caption{These are three examples of random matrices $M(n)$ valued in $\M_{n\times n}(\Z_p)$ whose cokernels approach the Cohen--Lenstra distribution by Theorem \ref{mainthm}. Blue regions represent $\varepsilon$-balanced entries and white regions represent entries with unrestricted distributions. For example, the entries in the white regions may all be fixed to zero.}\label{fig1}
\end{figure}

\begin{theorem}\label{mainthm}
    Let $p$ be a prime, $u \geq 0$ an integer, $\varepsilon > 0$ a real number, and $B$ a finite abelian $p$-group. Let $\alpha, \beta >0$ be real numbers satisfying $\alpha + \beta < 1$. For each positive integer $n$, let $M(n)$ be a random matrix valued in $\M_{n \times (n+u)}(\Z_p)$ with independent entries $M(n)_{ij}$, with at most $\alpha n$ $\varepsilon$-degenerate entries in each column and at most $\beta n$ $\varepsilon$-degenerate entries in each row. Then 
    \[\lim_{n \to \infty} \P(\cok(M(n)) \simeq B) = \frac{\prod_{k=1}^\infty (1-p^{-k-u})}{|B|^u\verts{\Aut(B)}}.\]
\end{theorem}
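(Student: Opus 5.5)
We will follow Wood's moment method. By Wood's results, the Cohen--Lenstra distribution with parameter $u$ is determined by its moments, and convergence of moments implies convergence in distribution. So it suffices to show that for every finite abelian $p$-group $G$,
\[
\lim_{n\to\infty}\E\bigl(\#\Sur(\cok(M(n)),G)\bigr)=|G|^{-u}.
\]
We may work modulo $a$, where $a$ is the exponent of $G$, and set $R=\Z/a\Z$. We write $\E\#\Sur(\cok M,G)=\sum_{F\in\Sur(R^n,G)}\P(FM=0)$. By independence of the columns, each term factors as $\prod_{j}\P(FM_{\cdot j}=0)$, where $M_{\cdot j}$ is the $j$th column. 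For each column, let $C_j\subseteq[n]$ be the set of rows where the entry is $\varepsilon$-balanced, so that $|C_j|\ge (1-\alpha)n$.

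\textbf{Codes.} As in Wood, we classify $F$ (viewed as a tuple $(F(e_1),\dots,F(e_n))\in G^n$) by how robustly it generates $G$. Say $F$ is a \emph{$\gamma$-code} if for every $\sigma\subseteq[n]$ with $|\sigma|<\gamma n$, the restriction $F|_{[n]\setminus\sigma}$ still surjects onto $G$. Choose $\gamma$ with $\beta<\gamma<1-\alpha$; this is possible because $\alpha+\beta<1$.

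If $F$ is a $\gamma$-code, then $F|_{C_j}$ is a $(\gamma-\alpha)$-code, since removing fewer than $(\gamma-\alpha)n$ rows from $C_j$ removes fewer than $\gamma n$ rows overall. Conditioning on the degenerate entries of column $j$, $FM_{\cdot j}$ equals a fixed shift plus a sum of independent $\varepsilon$-balanced contributions over a code. Wood's Fourier estimate then gives $|\P(FM_{\cdot j}=g)-|G|^{-1}|\le e^{-cn}|G|^{-1}$ uniformly in the shift. Taking the product over the $n+u$ columns, the codes contribute $(1+O(e^{-cn}))^{n+u}|G|^{-n-u}\cdot\#\{\text{codes}\}$. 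Since $\#\{\text{codes}\}=|G|^n(1+o(1))$, the codes contribute $|G|^{-u}+o(1)$.

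\textbf{Non-codes.} This is the main obstacle, and it is where the row bound $\beta$ enters. We follow Wood's depth stratification. If $F$ is not a code, there is $\sigma$ with $|\sigma|<\gamma n$ such that $F|_{[n]\setminus\sigma}$ lands in a proper subgroup $H$. Choosing $\sigma$ and $H$ maximally as in Wood, the number of such $F$ is at most $\binom{n}{|\sigma|}|H|^{n-|\sigma|}|G|^{|\sigma|}$.

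For the probability bound, we would use only the columns and rows outside $\sigma$. For each column $j$, the contribution $\sum_{i\notin\sigma}F(e_i)M_{ij}$ is taken over the index set $C_j\setminus\sigma$. The key observation is that we want this index set to still be large and to still generate $H$ robustly. It is at this point that we need a bound on degenerate entries per row rather than only per column. By a double-counting argument, most columns $j$ have $C_j$ meeting $[n]\setminus\sigma$ in a set on which $F$ is an $H$-code. The rows outside $\sigma$ number more than $(1-\gamma)n>(1-\gamma)n$, and each row has at most $\beta n$ degenerate entries. Since $\gamma>\beta$, the averaging works: a positive fraction of the columns see nearly all of the $H$-code.

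For those columns, Wood's lemma gives $\P(FM_{\cdot j}=0)\le |H|^{-1}(1+e^{-cn})$ times a factor coming from the $\sigma$-part, which we bound by $(1-\varepsilon)$-type terms or by $1$. The remaining columns are bounded trivially by $1$. We then choose the constants so that the resulting bound, of the form $\sum_{\sigma,H}\binom{n}{|\sigma|}|G|^{|\sigma|}|H|^{n-|\sigma|}\cdot |H|^{-(1-\eta)n}\cdots$, tends to $0$. This requires iterating over the subgroup lattice of $G$, by induction on the depth $[G:H]$ exactly as in Wood, with $\gamma$ small relative to the gain $\log|G:H|$.

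We expect that balancing $\gamma$ against $\beta$ and $1-\alpha$, together with handling columns whose balanced set misses part of the code, is the delicate step. We would first try to fix $\gamma=(\beta+1-\alpha)/2$ and use a second, smaller scale $\delta$ for the depth stratification, following Wood's proof.
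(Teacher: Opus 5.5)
There is a genuine gap. Your codes and robust images are \emph{global}: they are defined using all $n$ rows at a fixed linear scale $\gamma n$ with $\beta<\gamma<1-\alpha$. But $FM_{\cdot j}$ only sees $F$ through the balanced rows $C_j$ of that column, and these sets vary from column to column. A constant $\gamma$ in that window cannot make your argument work in general, for three reasons.

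\emph{The uniform estimate for codes fails.} The implication ``$F$ is a $\gamma$-code $\Rightarrow F|_{C_j}$ is a $(\gamma-\alpha)$-code'' needs $\gamma>\alpha$, which is impossible once $\alpha\ge 1/2$. Take $\alpha=0.6$, $\beta=0.3$, $G=\Z/p\Z$, and let $0.3n$ columns have balanced set exactly the first $0.4n$ rows, with all other entries $0$; this respects both degeneracy bounds. The map with $F(e_i)=0$ for $i\le 0.4n$ and $F(e_i)=1$ otherwise is a $\gamma$-code, yet $\P(FM_{\cdot j}=0)=1$ for those columns.

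\emph{The codes need not be almost all of $\Hom(V,G)$.} For large $\gamma$ the claim $\#\{\text{codes}\}=|G|^n(1+o(1))$ fails. With $p=2$, $G=\Z/2\Z$ and $\beta\ge 1/2$ (allowed, e.g.\ $\alpha=0.1$, $\beta=0.6$), you must take $\gamma>1/2$. A typical $F\in(\Z/2\Z)^n$ has only about $n/2$ nonzero coordinates, so almost every $F$ is a non-code.

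\emph{The non-code step loses exponentially.} Bounding a positive fraction $\eta n$ of columns by $1$ leaves a factor $|H|^{\eta n}$. The entropy $\binom{n}{|\sigma|}[G:H]^{|\sigma|}$ with $|\sigma|$ up to $\gamma n$ is also exponential. Wood's depth induction needs the scale to be small compared to $\varepsilon$, but yours is pinned above $\beta$ and $\varepsilon$ is arbitrary, so no $(1-\varepsilon)$-type gain absorbs these losses.

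\emph{What is missing is a column-relative, sublinear-scale setup.} The paper calls $F$ a code for column $j$ if $F$ still surjects after deleting the degenerate rows and any further $\sigma$ with $|\sigma|<\log(n)^2$. Depth and robust image $H$ are defined \emph{separately for each column}. This gives $\P(FM_{\cdot j}=0)\lesssim [G:H]/|G|$ column by column, and the $F$ that fail to be codes for some column number at most $|G|^n e^{-c(1-\alpha)n}$.

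The paper then stratifies non-codes by how many columns have each robust image. Let $D_1$ be the largest depth and $D_2$ the largest depth attained on at least $\theta n$ columns, with $\beta<\theta<1-\alpha$. The probability is at most about $(D_1/|G|)^{\theta n}(D_2/|G|)^{(1-\theta)n}$. For any $\zeta>0$, the count is at most about $(|G|/D_1)^{(1-\alpha)n}(|G|/D_2)^{\alpha n}e^{\zeta n}$. To get the count, fix $F$ on the balanced locus of a depth-$D_1$ column, which has dimension $\ge(1-\alpha)n$. Then use the row bound to cover the remaining $\le\alpha n$ directions by balanced loci of depth-$\ge D_2$ columns, each step shrinking the uncovered set by a factor $\beta/\theta$. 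The product is $(D_1/D_2)^{(\theta+\alpha-1)n}e^{\zeta n}$, which decays when $D_1>D_2$.

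The case $D_1=D_2$ needs three further arguments:
\begin{enumerate}
\item[(1)] many columns of smaller depth;
\item[(2)] one robust image on $\ge\theta n$ columns, which forces $\ge(\theta-\beta)n$ proper robust images and hence a $(1-\varepsilon)^{(\theta-\beta)n}$ gain;
\item[(3)] two distinct robust images of equal index whose balanced loci overlap in dimension $\ge\phi n$, which cuts the count by $2^{-\phi n/2}$.
\end{enumerate}
Case (3) is exactly the configuration that breaks in the block-diagonal example with $\alpha=\beta=1/2$, and your sketch has nothing that handles columns with different robust images.
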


\begin{figure}
    \centering
    \centering
\begin{minipage}{0.4\textwidth}
    \centering
    \begin{tikzpicture}[scale=0.8]
    \useasboundingbox (-1.5, -1.2) rectangle (4.5, 3.8);
  \draw[thick]
    (-0.1, 3)
    .. controls (-0.2, 2.6) .. (-0.2, 2.3)
    -- (-0.2, 0.7)
    .. controls (-0.2, 0.4) .. (-0.1, 0.0);

  \draw[thick]
    (3.1, 3)
    .. controls (3.2, 2.6) .. (3.2, 2.3)
    -- (3.2, 0.7)
    .. controls (3.2, 0.4) .. (3.1, 0.0);

  \fill[blue!25]
    (0.0, 0) -- (0.0,1.8) -- (2.1, 1.8) -- (2.1,3) --(3,3)--(3,0) -- cycle;

  \draw[<->, >=Stealth, thin]
    (0, 3.2) -- (2.1, 3.2)
    node[midway, above] {$0.7n$};

 \draw[<->, >=Stealth, thin]
    (-0.4,1.8) -- (-0.4, 3)
    node[midway, left] {$0.4n$};

\node at (1.5, -1) {$(d)$};
\node at (1.05, 2.4) {$0$};
\end{tikzpicture}
    \end{minipage}
\begin{minipage}{0.4\textwidth}
    \centering
    \begin{tikzpicture}[scale=0.8]
    \useasboundingbox (-1.5, -1.2) rectangle (4.5, 3.8);
  \draw[thick]
    (-0.1, 3)
    .. controls (-0.2, 2.6) .. (-0.2, 2.3)
    -- (-0.2, 0.7)
    .. controls (-0.2, 0.4) .. (-0.1, 0.0);

  \draw[thick]
    (3.1, 3)
    .. controls (3.2, 2.6) .. (3.2, 2.3)
    -- (3.2, 0.7)
    .. controls (3.2, 0.4) .. (3.1, 0.0);

\fill[blue!25]
    (0,3) -- (0.0,1.5) -- (1.5, 1.5) -- (1.5,0) -- (3,0) -- (3,1.5) -- (1.5,1.5) -- (1.5,3) -- cycle;

  \draw[<->, >=Stealth, thin]
    (1.5, -0.2) -- (3, -0.2)
    node[midway, below] {$0.5n$};

  \draw[<->, >=Stealth, thin]
    (0, 3.2) -- (1.5, 3.2)
    node[midway, above] {$0.5n$};

 \draw[<->, >=Stealth, thin]
    (3.4, 0) -- (3.4, 1.5)
    node[midway, right] {$0.5n$};

 \draw[<->, >=Stealth, thin]
    (-0.4,1.5) -- (-0.4, 3)
    node[midway, left] {$0.5n$};

\node at (1.5, -1) {$(e)$};

\node at (0.75, 0.75) {$0$};
\node at (2.25, 2.25) {$0$};

\end{tikzpicture}
    \end{minipage}
    \caption{These are two examples of random matrices $M(n)$ valued in $\M_{n\times n}(\Z_p)$ whose cokernels do \emph{not} approach the Cohen--Lenstra distribution. Blue regions represent $\varepsilon$-balanced entries and white regions represent entries which are all fixed to zero.}\label{fig2}
\end{figure}

Figure \ref{fig1} shows three examples of random matrices $M(n)$ which satisfy the assumptions of Theorem \ref{mainthm} with various values of $\alpha$ and $\beta$: in (a) we can take $\alpha = 0.33$ and $\beta = 0.66$ and in (b) and (c), we can take $\alpha = \beta =0.49$. So, the cokernels of all of these matrices approach the Cohen--Lenstra distribution.

Figure \ref{fig2} shows two examples of random matrices $M(n)$ whose cokernels do \emph{not} approach the Cohen--Lenstra distribution which illustrate that the requirement that $\alpha + \beta <1$ in Theorem \ref{mainthm} is tight. The matrix in (d) can never have full rank so its cokernel will not approach the Cohen--Lenstra distribution. The cokernel of the matrix in (e) is given by the direct sum of the cokernels of two $(n/2) \times (n/2)$ matrices with all entries $\varepsilon$-balanced, so it will have a different distribution than the Cohen--Lenstra distribution. These matrices satisfy the assumptions of Theorem~\ref{mainthm} except that in (d) we have $\alpha = 0.4$ and $\beta = 0.7$ and in (e) we have $\alpha = \beta = 0.5$, so $\alpha + \beta  \geq 1$ in both examples.

If we prescribe specific locations of the $\varepsilon$-balanced entries, then we can prove Cohen--Lenstra universality for random matrix ensembles with many fewer $\varepsilon$-balanced entries than in Theorem \ref{mainthm}. M\'esz\'aros' \cite{M24} studied band matrices, which have a particularly convenient set of distinguished entries. Let $B(n)$ be a random matrix in $\M_{n\times n}(\Z_p)$ with $B(n)_{ij}$ distributed according to the Haar measure on $\Z_p$ if $|i-j| \leq w_n$ and $B(n)_{ij} = 0$ otherwise. Then, M\'esz\'aros \cite{M24} found that $\cok (B(n))$ approaches the Cohen--Lenstra distribution if and only if $\lim_{n \to \infty} w_n - \log_p (n) = \infty$. In Theorem \ref{theorembandmatrix}, we generalize this by allowing the entries in the band to be arbitrary $\varepsilon$-balanced variables, and allowing the entries outside the band to be fully arbitrary, partially answering \cite[Problem 22]{M24}. To do this, we have to require $w_n \geq \log(n)^{1+\delta}$ for some $\delta > 0$. Figure \ref{fig3} illustrates that Theorem~\ref{theorembandmatrix} allows the entries in the band to be arbitrary $\varepsilon$-balanced variables rather than Haar-uniform and allows the entries outside the band to be arbitrary rather than zero, while M\'esz\'aros' \cite{M24} work applies to matrices with slightly narrower bands. 

\begin{figure}
    \centering
    \begin{minipage}{0.45\textwidth}
    \centering
    \begin{tikzpicture}[scale=1.2]
    \useasboundingbox (-0.5, -1.2) rectangle (4.5, 3.8);
  \draw[thick]
    (-0.1, 3)
    .. controls (-0.2, 2.6) .. (-0.2, 2.3)
    -- (-0.2, 0.7)
    .. controls (-0.2, 0.4) .. (-0.1, 0.0);

  \draw[thick]
    (3.1, 3)
    .. controls (3.2, 2.6) .. (3.2, 2.3)
    -- (3.2, 0.7)
    .. controls (3.2, 0.4) .. (3.1, 0.0);

  \fill[blue!25]
    (0.0, 2.6) -- (2.6, 0.0) -- (3, 0.0) -- (3, 0.4) --
    (0.4, 3) -- (0.0, 3) -- cycle;

  \draw[<->, >=Stealth, thin]
    (0.0, 3.15) -- (0.4, 3.15)
    node[midway, above] {$\log(n)^{1+\delta}$};

 \draw[<->, >=Stealth, thin]
    (2.6, -0.15) -- (3, -0.15)
    node[midway, below] {$\log(n)^{1+\delta}$};

 \draw[<->, >=Stealth, thin]
    (3.3, 0) -- (3.3, 0.4)
    node[midway, right] {$\log(n)^{1+\delta}$};

  \draw[<->, >=Stealth, thin]
    (-0.3, 2.6) -- (-0.3, 3)
    node[midway, left] {$\log(n)^{1+\delta}$};

\node at (1.5, -1) {$(f)$ (Theorem \ref{theorembandmatrix})};

\end{tikzpicture}
    \end{minipage}    
    \begin{minipage}{0.45\textwidth}
    \centering
    \begin{tikzpicture}[scale=1.2]
    \useasboundingbox (-0.5, -1.2) rectangle (4.5, 3.8);
  \draw[thick]
    (-0.1, 3)
    .. controls (-0.2, 2.6) .. (-0.2, 2.3)
    -- (-0.2, 0.7)
    .. controls (-0.2, 0.4) .. (-0.1, 0.0);

  \draw[thick]
    (3.1, 3)
    .. controls (3.2, 2.6) .. (3.2, 2.3)
    -- (3.2, 0.7)
    .. controls (3.2, 0.4) .. (3.1, 0.0);

  \fill[gray!40!white]
    (0.0, 2.7) -- (2.7, 0.0) -- (3, 0.0) -- (3, 0.3) --
    (0.3, 3) -- (0.0, 3) -- cycle;

  \draw[<->, >=Stealth, thin]
    (0.0, 3.15) -- (0.3, 3.15)
    node[midway, above] {$(1+\delta)\log_p(n)$};

 \draw[<->, >=Stealth, thin]
    (2.7, -0.15) -- (3, -0.15)
    node[midway, below] {$(1+\delta)\log_p(n)$};

 \draw[<->, >=Stealth, thin]
    (3.3, 0) -- (3.3, 0.3)
    node[midway, right] {$(1+\delta)\log_p(n)$};

  \draw[<->, >=Stealth, thin]
    (-0.3, 2.7) -- (-0.3, 3)
    node[midway, left] {$(1+\delta)\log_p(n)$};

\node at (2.25, 2.25) {$0$};

\node at (0.75, 0.75) {$0$};

\node at (1.5, -1) {$(g)$ (\cite{M24})};

\end{tikzpicture}
    \end{minipage}
    \caption{The cokernels of $M(n)$ in (f) and $B(n)$ in (g) approach the Cohen--Lenstra distribution by Theorem \ref{theorembandmatrix} and \cite{M24}, respectively. Blue regions represent $\varepsilon$-balanced entries, gray regions represent Haar-uniform entries, and white regions represent entries with unrestricted distributions in (f) and entries fixed to zero in (g).}
    \label{fig3}
\end{figure}

\begin{theorem}\label{theorembandmatrix}
    Let $p$ be a prime, $\delta,\varepsilon>0$ real numbers, and $B$ a finite abelian $p$-group. Let $M(n)$ be a random matrix valued in $\M_{n \times n}(\Z_p)$ with independent entries $M(n)_{ij}$, which are $\varepsilon$-balanced if ${|i-j| \leq \log(n)^{1+\delta}}$  and arbitrary otherwise. Then \[\lim_{n \to \infty} \P(\cok(M(n)) \simeq B) = \frac{\prod_{k=1}^\infty (1-p^{-k})}{|\Aut(B)|}.\]
\end{theorem}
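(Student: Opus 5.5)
We follow Wood's moment method \cite{Wood15}. Since the Cohen--Lenstra distribution is determined by its moments, $\E(\#\Sur(\cok(M(n)),G)) = 1$ for every finite abelian $p$-group $G$ (here $u=0$), it suffices to show $\lim_{n\to\infty}\E(\#\Sur(\cok M(n),G)) = 1$ for each such $G$. Write $\#\Sur(\cok M,G) = \sum_{F} \mathbbm{1}(FM=0)$, where $F$ ranges over surjections $\Z_p^n \to G$, i.e.\ over $F\in\Hom(\Z_p^n,G)$ with $F(e_1),\dots,F(e_n)$ generating $G$. Because the columns of $M$ are independent, $\P(FM=0)=\prod_j \P(FM_{\cdot j}=0)$, where $FM_{\cdot j}=\sum_i F(e_i)M_{ij}$. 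As in Wood's proof, we split the surjections $F$ into ``codes'' (those that remain far from degenerate on every large coordinate set) and non-codes, stratified by depth, and bound each class separately.

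\textbf{Step 1 (codes).} Wood's key estimate is: if $F$ is a code of distance $w$, i.e.\ for every $\sigma\subset[n]$ with $|\sigma|<w$ the elements $\{F(e_i): i\notin\sigma\}$ still generate $G$, then $\sum_i F(e_i)X_i$ is within $\exp(-\varepsilon w/|G|^2)$ of uniform whenever the $X_i$ are independent and $\varepsilon$-balanced. In column $j$ only the coordinates with $|i-j|\le \log(n)^{1+\delta}$ are guaranteed to be balanced, so we condition on the arbitrary entries of the column, which produces a fixed translate, and apply the estimate to the restriction of $F$ to the band window $I_j=\{i:|i-j|\le L\}$, where $L=\log(n)^{1+\delta}$. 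We therefore call $F$ \emph{good} if, for every $j$, the restriction of $F$ to $I_j$ is a code of distance $cL$ inside $I_j$. For good $F$ each column contributes a factor $|G|^{-1}(1+O(e^{-c'L}))$ to $\P(FM=0)$, and since $nL^{-?}$ aside we have $n e^{-c'\log(n)^{1+\delta}}\to 0$, the total is $\P(FM=0)=|G|^{-n}(1+o(1))$. Summing over all $F$, of which there are at most $|G|^n$, gives a contribution $1+o(1)$, once we know that bad $F$ contribute $o(1)$ also to the count $|G|^{-n}\#\{F\}$ (which is standard: a random $F$ is good with probability $1-o(1)$ because $L\gg\log n$).

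\textbf{Step 2 (bad $F$).} This is the main obstacle. If $F$ is bad, then on some window $I_j$ at most $cL$ coordinates escape a proper subgroup $H<G$. The plan is to exploit locality: a bad window for $H$ forces $F(e_i)\in H$ on most of $I_j$, and the neighbouring columns $j'$ near $j$ see overlapping windows. We will bound $\P(FM=0)$ by using, for columns whose windows are good, the near-uniform estimate, and for the remaining columns the weaker bound $\P(FM_{\cdot j}\in \text{coset}) \le (1-\varepsilon)^{\#\{\text{bad coordinates}\}}$ or a bound $|H|^{-1}$-type from Wood's ``depth'' argument applied to the projection $G\to G/H$ when $F$ mod $H$ is a code on the window. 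We count bad $F$ as follows. Partition $[n]$ into blocks of length $L$ and record for each block the subgroup $H$ (if any) witnessing degeneracy together with the $\le cL$ exceptional positions. The number of choices is about $\binom{L}{cL}|G|^{cL}|H|^{L}$ per degenerate block, whereas the probability gain is about $(|G|/|H|)^{L}$ per degenerate block, apart from losses of $e^{O(cL\log(1/c))}$. For $c$ small these losses are beaten, so each degenerate block costs a factor $\le (|H|/|G|)^{L/2}\le p^{-L/2}$. Since there are $n/L$ blocks and $p^{-L/2}=n^{-\omega(1)}$, a union bound over the configurations of degenerate blocks gives total contribution $\sum_{k\ge1}\binom{n/L}{k}n^{-\omega(1)k}=o(1)$. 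The delicate point is that the degenerate blocks must interact correctly with columns: a column sees a degenerate window only if its window overlaps degenerate blocks, and we need the full $G$-uniformity (or at least $G/H$-uniformity) to be gained in roughly $L$ columns per degenerate block. I would handle this by nesting over a chain of subgroups, as in Wood's depth argument, so that for each block we use the largest $H$ for which $F$ mod $H$ restricted to the nearby windows is still a code. I expect this bookkeeping, and ensuring that the $\log(n)^{1+\delta}$ width rather than $\log n$ is precisely what makes the $e^{O(cL)}$ entropy losses negligible, to be the main work.

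Combining Steps 1 and 2 gives $\E(\#\Sur(\cok M(n),G))\to1$ for all $G$. Wood's moment theorem then yields the stated limit.
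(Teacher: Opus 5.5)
Your Step 1 is fine and matches the paper: codes are defined using only the in-band coordinates of each column, and the count of non-codes in $\Hom$ is Lemma~\ref{lemmanoncodes}.

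\textbf{The gap is in Step 2.} The saving of $(|H|/|G|)^{L/2}$ per degenerate block does not exist. Forcing $F(e_i)\in H$ on a stretch of coordinates divides the number of $F$ by $[G:H]$ per coordinate. But every column whose window is essentially mapped into $H$ then has $\P(FM_{\cdot j}=0)\approx |H|^{-1}$ instead of $|G|^{-1}$, since the off-band entries may be zero. These two effects cancel exactly, leaving only your entropy loss $e^{O(cL\log(1/c))}\ge 1$.

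Here is a concrete example. Take $[G:H]=p$ and the $F$ with $F(e_i)\in H$ for $i\neq i_0$ and $F(e_{i_0})\notin H$. All $n/L$ blocks are degenerate, so your per-block factor would bound the contribution of these $F$ by roughly $p^{-n/2}$. Now choose zero off-band entries and in-band entries that vanish mod $p$ with probability exactly $1-\varepsilon$. Then the contribution of these $F$ is of order $n(1-\varepsilon)^{2L}$. It is small only because each of the roughly $2L$ columns whose window contains $i_0$ yields one factor $1-\varepsilon$, and this beats the $n$ choices of $i_0$.

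A single column yields at most one such factor. As more coordinates escape $H$, $\P(FM_{\cdot j}\in \text{coset})$ tends to $[G:H]^{-1}$, not to $0$, so your bound $(1-\varepsilon)^{\#\text{bad coordinates}}$ is false. This is also how Wood's depth argument works: the index factors cancel, and $(1-\varepsilon)^n$ beats the entropy. For band matrices the new difficulty is that only columns with \emph{proper} robust image (robust image strictly inside $F(V_{\setminus\tau_j})$) supply the factor $1-\varepsilon$, and there may be very few of them. So your union bound $\sum_k\binom{n/L}{k}n^{-\omega(1)k}$ is unsupported, and nesting over subgroup chains does not create the missing saving.

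\textbf{What is needed} is an entropy bound that is subexponential in the number $r$ of proper columns. The paper proceeds as follows:
\begin{itemize}
\item It stratifies non-codes by the numbers $n_k$ of columns with robust image $H_k$ and by $r$.
\item It uses $\P(FM=0)\le K\prod_k|H_k|^{-n_k}(1-\varepsilon)^r$ (Lemma~\ref{lemmaprobsimple}).
\item It proves the hard count $\#\Sur_{\{n_k\},r}(V,G)\le K\prod_k|H_k|^{n_k}\exp\bigl(Cr/\log(n)^{\delta/4}\bigr)$ (Lemma~\ref{lemmabandcount}).
\end{itemize}

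That count rests on two structural facts for which you would need substitutes:
\begin{itemize}
\item The set $T_k$ of columns $j$ with $F(V_{\setminus\tau_j})\subseteq H_k$ is a union of at most $4r/\log(n)^{\lambda}$ intervals, because each interval is bordered by about $\log(n)^{\lambda}$ proper columns (Lemma~\ref{lemmaregions}).
\item A covering lemma on the path graph (Lemma~\ref{lemma7}) lets one apply Lemma~\ref{lemmarobustimage}, with its $\exp(\log(n)^{1+3\delta/4})$ loss, to only $O(r/w_n)$ proper columns.
\end{itemize}

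The regime $r<\frac18\log(n)^{\lambda}$ is handled separately. There $T_k=\emptyset$ for $k<s$, so $n_s\ge n-r$, and Lemma~\ref{lemmanoncodes} applies. The width $\log(n)^{1+\delta}$ enters through this comparison of $e^{o(r)}$ with $(1-\varepsilon)^r$, not through a per-block index gain.
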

Theorem \ref{theorembandmatrix} applies to a large variety of random matrix ensembles with some entries $\varepsilon$-balanced and others arbitrary, because we can freely permute rows and columns of $M(n)$. In particular, Theorem \ref{theorembandmatrix} implies many cases of Theorem \ref{mainthm}, such as all three examples in Figure \ref{fig1}. Theorem \ref{theorembandmatrix} also implies many cases of Singhal's \cite[Conjecture 4.2.2]{S26thesis} general conjecture  about the cokernels of random block matrix ensembles with some blocks having $\varepsilon$-balanced entries and other blocks being fixed to zero.

Kang, Lee, and Yu \cite{KLY24} have also studied random matrices over $\M_{n \times (n+u)}(\Z_p)$ with independent entries, each of which is either 0 or $\varepsilon$-balanced. Let $k \geq 1$, and for each $n$ choose $\alpha_n^{(1)},\ldots,\alpha_n^{(k)} > 0$ and $\beta_n^{(1)},\ldots,\beta_n^{(k)}>0$ such that $\lim_{n \to \infty} (n-\alpha_n^{(\ell)} - \beta_n^{(\ell)}) = \infty$ for all $1 \leq \ell \leq k$. Let $M(n)_{ij} = 0$ if there is some $\ell$ such that $i \leq \alpha_n^{(\ell)}$ and $j \leq \beta_n^{(\ell)}$, and let $M(n)_{ij}$ be $\varepsilon$-balanced otherwise. Then, Kang--Lee--Yu \cite[Theorem 10.3]{KLY24} prove that as $n \to \infty$ then $\cok(M(n))$ approaches the Cohen--Lenstra distribution. When $n-\alpha_n^{(\ell)} - \beta_n^{(\ell)} \geq \log(n)^{1+\delta}$, their result follows from Theorem \ref{theorembandmatrix}, after permuting rows by replacing $i$ by $(n+1-i)$. For further work classifying when matrices with Haar-uniform nonzero entries and zero entries in various patterns similar to \cite{KLY24,M24} will be Cohen--Lenstra universal, see \cite{JKLY26}. 

A natural question is to determine the minimum number of arbitrary $\varepsilon$-balanced entries in a matrix ensemble whose cokernel approaches the Cohen--Lenstra distribution. Let $Z_n \subseteq [n]^2$ for each $n \in \N$ and fix $\varepsilon>0$. Let $M_{Z}(n)$ be a random matrix valued in $\M_{n\times n}(\Z_p)$, such that $M_{Z}(n)_{ij}$ is $\varepsilon$-balanced if $(i,j) \in Z_n$, and $M_{Z}(n)_{ij}=0$  otherwise. Let $\omega(n)$ be the slowest growing function that there exists a sequence $\{Z_n\}_{n \geq 1}$ with $\omega(n) = |Z_n|$ where $\lim_{n \to \infty} \cok(M_{Z}(n))$ equals the Cohen--Lenstra distribution for any choice of the \mbox{$\varepsilon$-balanced} variables. Kang--Lee-Yu \cite[Remark 1.8]{KLY24} show that $\omega(n) < \left(\frac{1}{2}+\delta\right)n^2$ for any $\delta>0$, and ask \cite[Question 1.9]{KLY24} whether $\omega(n) < \frac{1}{2}n^2$. Theorem \ref{theorembandmatrix} proves that $\omega(n) \leq n\log(n)^{1+\delta}$, giving an affirmative answer to \cite[Question 1.9]{KLY24}.\footnote{While Theorem \ref{theorembandmatrix} proves $\omega(n) \leq 2n\log(n)^{1+\delta}$, we can change $\delta$ to get $\omega(n) \leq n\log(n)^{1+\delta}$ for any $\delta > 0$ and $n$ large.}

Combining this with the lower bound from \cite[Theorem 4.1]{KLY24} gives \[n\log_p(n) \leq \omega(n) \leq n\log(n)^{1+\delta}.\] Recently, Jung--Lee--Yu \cite{JLY26} found that Theorem \ref{thmWood} applies to random matrices $M(n)$ with $\varepsilon = C\frac{\log(n)}{n}$ for any $C>1$. So, there are random matrix ensembles valued in $\M_{n \times n}(\Z_p)$ where each matrix has an average of $C n \log(n)$ nonzero entries which appear in random locations, whose cokernels approach the Cohen--Lenstra distribution. Perhaps this heuristic suggests that ${\omega(n) \in \Theta(n \log n)}$. 

\subsection{Matrices with dependence between entries.}\label{sectdependent}
In general, many universality results in probability theory are more useful when versions are proved with weaker independence assumptions. (See e.g. \cite[Chapter 8]{Durrett19} for a treatment of the Central Limit Theorem with some dependence allowed.) So, a natural question, asked by Wood \cite[Open Problem 3.10]{Wood22} in her ICM talk, is to study which random matrix ensembles over $\Z$ or $\Z_p$ with dependent entries will have cokernels approaching the Cohen--Lenstra distribution.

There are many random matrix ensembles over $\Z$ or $\Z_p$ with dependent entries whose cokernels approach the Cohen--Lenstra distribution: (1) The Laplacian of an Erd\H{o}s--R\'enyi random directed graph \cite{NW22}; (2) The Laplacian of a random $d$-regular or bipartite directed graph $\Gamma$ \cite{M20,S26}; (3) A block matrix with dependence within blocks and independence of different blocks \cite{Gorokhovsky26}; (4) The ensemble ${M(n) = G-I_n}$ for $G$ a random matrix in $\GL_n(\Z_p)$ distributed according to Haar measure \cite{FW89}; (5) The ensemble ${M(2n) = G-I_{2n}}$ for $G$ a random matrix in a generalized symplectic group $\text{GSp}_{2n}^{(q)}(\Z_p)$ distributed according to Haar measure, where $q$ is a prime power such that $p \nmid q-1$, as $q \to \infty$ \cite{G15}. Determining which ensembles are in this universality class is a subtle problem, as illustrated by the many other random matrix ensembles whose cokernels approach different related distributions: symmetric matrices with independent entries on and above the main diagonal \cite{Wood14}, Laplacians of Erd\H{o}s--R\'enyi random undirected graphs \cite{Wood14}, Laplacians of random $d$-regular or bipartite undirected graphs \cite{M20,S26directed}, products of matrices with independent entries \cite{NVP24}, polynomials evaluated on matrices with independent entries \cite{CY23}, or matrices as in (5) when $p \mid q-1$ \cite[Corollary 3.2.7]{G15}. These examples illustrate that even a small amount of dependence can force the limiting cokernel distribution to differ from the Cohen--Lenstra distribution. For example, if $M(n)$ is a product of two random matrices with independent entries, then each entry in $M(n)$ mildly depends on those in its row and column and is independent of other entries, but Nguyen--Van Peski \cite{NVP24} showed that this dependence is strong enough to change the limiting distribution of $\cok(M(n))$.

We consider matrices where there can be dependence between entries within each column, but not between entries in different columns. Each column will have $n$ independent (possibly) random bits, but these bits will correspond to $n$ linear functions of the entries rather than exactly the $n$ entries in the column. We now define this model precisely.

\starpar{Let $p$ be a prime, $u \geq 0$ an integer, and $\varepsilon > 0$ a real number. Let $M(n)$ be a random matrix valued in $\M_{n \times (n+u)}(\Z_p)$ with independent columns $M(n)_1,\ldots,M(n)_{n+u}$. Let $A_1,\ldots,A_{n+u}$ in $\GL_n(\Z_p)$ be such that $\tilde{M}(n)_j \coloneq A_j M(n)_j$ has independent entries.
Let $e_1,\ldots,e_n$ be the canonical basis for $\Z_p^n$ and let $v_i^{j} = A_j^{-1}e_i$. 
Define the \emph{$\varepsilon$-balanced locus of $M(n)_j$} to be \[\hspace{-60 pt}W_j \coloneq \langle v_i^{j}: i \in [n] \text{ and } (\tilde{M}(n)_j)_i \text{ is $\varepsilon$-balanced} \rangle.\]}

We give three examples of columns $M(n)_j$ satisfying $(\star)$, which illustrate how $M(n)_j$ might have linear dependence between entries: (I) The top $n-1$ entries of $M(n)_j$ are selected independently from $\varepsilon$-balanced distributions, and the bottom entry is selected to make the entries in the columnn sum to 0; (II) The top $n-1$ entries of $M(n)_j$ are selected independently from $\varepsilon$-balanced distributions, and the bottom entry is selected to make the entries in the column excluding the top entry sum to 0; (III) Let $\lambda_1,\ldots,\lambda_n$ be independent $\varepsilon$-balanced variables, and let the $i^{th}$ entry of $M(n)_j$ be $X_i = \lambda_1 + \cdots + \lambda_i$. Using $(\star)$, these examples are represented by the following matrices $A_j$ which force $\tilde{M}(n)_j \coloneq A_jM(n)_j$ to have independent entries: 
\begin{equation*}
    A_j^{\textrm{(I)}} = \begin{pmatrix}
1 & 0 & 0 & \cdots & 0& 0 \\
0 & 1 & 0 & \cdots & 0& 0 \\
0 & 0 & 1 & \cdots & 0& 0 \\
\vdots & \vdots & \vdots & \ddots & \vdots & \vdots\\
0 & 0 & 0 & \cdots & 1 & 0 \\
1 & 1 & 1 & \cdots & 1& 1
\end{pmatrix} \text{, } A_j^{(\textrm{II})} = \begin{pmatrix}
1 & 0 & 0 & \cdots & 0& 0 \\
0 & 1 & 0 & \cdots & 0& 0 \\
0 & 0 & 1 & \cdots & 0& 0 \\
\vdots & \vdots & \vdots & \ddots & \vdots & \vdots\\
0 & 0 & 0 & \cdots & 1 & 0 \\
0 & 1 & 1 & \cdots & 1& 1
\end{pmatrix}\text{, } A_j^{(\textrm{III})} = \begin{pmatrix}
1 & 0 & 0 & \cdots & 0& 0 \\
-1 & 1 & 0 & \cdots & 0& 0 \\
0 & -1 & 1 & \cdots & 0& 0 \\
\vdots & \vdots & \vdots & \ddots & \vdots & \vdots\\
0 & 0 & 0 & \cdots & 1 & 0 \\
0 & 0 & 0 & \cdots & -1& 1
\end{pmatrix}.
\end{equation*}
In these three examples, all entries of $\tilde M(n)_j$ are $\varepsilon$-balanced, except $(\tilde M(n)_j)_n =0$ in (I) and (II). In (I), $W_j$ is all vectors in $\Z_p^n$ with column sum 0; in (II), $W_j$ is all vectors with column sum excluding the top entry being 0; in (III), $W_j = \Z_p^n$. The $\varepsilon$-balanced loci $W_j$ are necessary to state our theorem about matrices with dependence between the entries of each column.

\begin{theorem}\label{thmdependent}
    Assume the conditions of $(\star)$. Let $B$ be a finite abelian $p$-group and let $\alpha,\beta>0$ be real numbers such that $\alpha + \beta < 1$. Assume that at most $\alpha n$ entries of $\tilde M(n)_j$ are $\varepsilon$-degenerate for all $j \in [n+u]$. Assume that there exists a basis $u_1,\ldots,u_n$ for $\Z_p^n$ such that for all $i \in [n]$ we have \begin{equation}\label{eqweirdcondition}\#\{j \in [n+u]:u_i \not \in W_j\} \leq \beta n.\end{equation} Then \[\lim_{n \to \infty} \P(\cok(M(n)) \simeq B) = \frac{\prod_{k=1}^\infty (1-p^{-k-u})}{|B|^u\verts{\Aut(B)}}.\]
\end{theorem}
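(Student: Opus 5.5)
We follow Wood's moment method. By \cite{Wood15}, the Cohen--Lenstra distribution (with parameter $u$) is determined by its moments. Hence it suffices to show that for every finite abelian $p$-group $G$,
\[\lim_{n\to\infty}\E\bigl(\#\Sur(\cok(M(n)),G)\bigr)=|G|^{-u}.\]
We write $\E\#\Sur(\cok M,G)=\sum_{F\in\Sur(\Z_p^n,G)}\P(FM=0)$. Here $F$ ranges over surjections $\Z_p^n\to G$, which factor through $(\Z/p^e)^n$ for $e$ with $p^eG=0$. Independence of the columns gives $\P(FM=0)=\prod_j \P(F M_j=0)$. For each column we write $F M_j=(F A_j^{-1})\tilde M_j=\sum_i (\tilde M_j)_i\,F(v_i^j)$, a sum of independent terms. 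The whole problem is then to control these products, exactly as in Wood's argument, but with the per-column coefficient vector $(F(v_i^j))_i$ in place of $(F(e_i))_i$. The condition $(\star)$ is designed precisely so that this substitution works column by column.

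\textbf{Step 1: codes.} Following Wood, call $F$ a \emph{code of distance $w$} if for every $\sigma\subseteq[n]$ with $|\sigma|<w$ the restriction of $F$ to the span of the $e_i$ with $i\notin\sigma$ is still surjective. Here we use the basis $u_1,\dots,u_n$ of the hypothesis instead of the $e_i$. For column $j$, the hypothesis \eqref{eqweirdcondition} says that for each $i$ at most $\beta n$ columns fail $u_i\in W_j$. We want that for a code $F$ (in the $u$-basis) of distance $\gamma n$, and for all but few columns $j$, the vectors $F(v_i^j)$ with $(\tilde M_j)_i$ balanced still generate $G$, even after deleting any $\gamma' n$ of them. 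Since at most $\alpha n$ of the $v_i^j$ are degenerate and $\alpha+\beta<1$, one expects each column's balanced coefficients $\{F(v_i^j)\}$ to span $G$ robustly. Wood's Lemma then gives $|\P(FM_j=0)-|G|^{-1}|\le e^{-cn}$, and the product is $|G|^{-(n+u)}(1+o(1))$. Summing over $\#\Sur(\Z_p^n,G)\approx |G|^n$ codes gives the main term $|G|^{-u}$.

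\textbf{Step 2: non-codes.} We stratify the non-codes $F$ by the largest subgroup structure, namely Wood's depth: the subgroup $H=F(\langle u_i: i\notin\sigma\rangle)$ with $|\sigma|$ small. We then bound $\P(FM=0)$ by $(1-\varepsilon')^{\#\text{good columns}}$ times $|H|^{-n}$-type factors, and count such $F$ by $\binom{n}{|\sigma|}|H|^{n-|\sigma|}|G|^{|\sigma|}$. The key input is that $\alpha+\beta<1$ leaves a positive proportion $1-\alpha-\beta$ of slack. For each column $j$ and each coordinate $i$ outside $\sigma$ with $u_i\in W_j$, the vector $u_i$ is a $\Z_p$-combination of balanced $v^j$'s. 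So if $F(u_i)\notin H'$ for a proper subgroup $H'$, some balanced $v^j_k$ has $F(v^j_k)\notin H'$, which forces a $(1-\varepsilon)$ saving per such column. We must show that this holds for at least $(1-\beta-o(1))n$ columns, and that it yields a bound summable against the count. This uses Wood's computations verbatim once the per-column saving is established.

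\textbf{Main obstacle.} The crux is the interface between the two bases. Distance and depth are naturally measured in the $u$-basis, since that is where \eqref{eqweirdcondition} lives. However, the saving per column comes from the number of balanced coordinates $(\tilde M_j)_i$ whose coefficient $F(v_i^j)$ escapes a subgroup, which lives in the $v^j$-basis. For the code estimate we need this number to be linear in $n$, not merely positive. A single $u_i$ lying in $W_j$ only guarantees one escaping balanced coordinate. I would first try to get linearity by choosing, for a given $F$ of distance $\gamma n$, many $u_i$ with $F(u_i)\notin H'$, and arguing that they cannot all be expressed through few balanced $v^j$'s. Failing that, I would fall back on the cruder bound, available columnwise, of $\le 1-\varepsilon'$ per good column, namely $(1-\varepsilon')^{(1-\beta)n}$. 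I would then compare this against a refined count of non-codes in which the parameters $\alpha,\beta$ and the distance $\gamma$ are tuned so that $|G|^{\gamma n}\binom{n}{\gamma n}(1-\varepsilon')^{(1-\alpha-\beta)n}\to0$. This is where $\alpha+\beta<1$ enters.
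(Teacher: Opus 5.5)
Your outer framework matches the paper's: the moment method, independence of columns, and writing $FM_j=\sum_i(\tilde M_j)_iF(v_i^j)$ so that Wood's Fourier estimate applies column by column. However, the code notion you build on is the wrong one, and the non-code analysis, which is the whole difficulty, is missing. What controls $\P(FM_j=0)$ is how robustly the balanced coefficients $\{F(v_i^j): i\notin\tau_j\}$ generate subgroups of $G$. So codes, depth and robust image must be defined \emph{separately for each column}, in its own basis restricted to $W_j$ (Definitions \ref{defcode}--\ref{defrobustimage}). A code measured in the $u$-basis does not control any column. Concretely, take $u_i=e_i$ and let $\beta n$ columns be of type (I), so $W_j=\{x:\sum_i x_i=0\}$ for them. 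Then $F(x)=\sum_i x_i \bmod p$ is a $u$-code of every distance, yet $FM_j=0$ surely for those columns. Those $\beta n$ factors equal $1$ rather than about $1/p$. So your Step 1 claim (robust spanning for all but few columns, hence $\P(FM=0)=|G|^{-(n+u)}(1+o(1))$) is false. The paper instead calls $F$ a code for $M$ only if it is a code for every column, and bounds all other $F$ with Lemma \ref{lemmanoncodes}.

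Once depth is per column, different columns have different robust images $H^{(j)}$. Lemmas \ref{lemmadepthprobabilitycolumns} and \ref{lemmaprobsimple} give $\P(FM=0)\lesssim\prod_j|H^{(j)}|^{-1}$, with a factor $1-\varepsilon$ \emph{only} for columns whose robust image is proper. The count of such $F$ must match this heterogeneous product. A count $\binom{n}{|\sigma|}|H|^{n-|\sigma|}|G|^{|\sigma|}$ for a single $u$-basis subgroup $H$ does not pair with it. Your fallback, a $(1-\varepsilon')$ saving per good column, is unavailable in general. An escaping $u_i$ with $F(u_i)\notin H'$ and $u_i\in W_j$ makes the robust image proper only for columns whose robust image \emph{is} $H'$. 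So it yields $\gtrsim(\theta-\beta)n$ savings only when one subgroup is the robust image of at least $\theta n$ columns (the paper's Case 2). When depths vary across columns, or many distinct robust images of equal index occur, there may be no such savings at all.

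The missing ingredient is the counting Lemma \ref{mainlemma}. Let $D_1$ be the maximal column depth and $D_2$ the depth attained by at least $\theta n$ columns, where $\beta<\theta<1-\alpha$. One writes $V=V_0\oplus V_1\oplus\cdots\oplus V_{\ell+1}$, with $V_0=W_{j_0}$ of dimension at least $(1-\alpha)n$ and each $V_k\subseteq W_{j_k}$ spanned by $u_i$'s. The condition \eqref{eqweirdcondition} shrinks the leftover index set by a factor $\beta/\theta$ per step. This gives $\#\Sur_{\{n_k\},r}(V,G)\le K(|G|/D_1)^{(1-\alpha)n}(|G|/D_2)^{\alpha n}e^{\zeta n}$ against $\P(FM=0)\le K(D_1/|G|)^{\theta n}(D_2/|G|)^{(1-\theta)n}$, which wins when $D_1>D_2$ because $\theta<1-\alpha$. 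When $D_1=D_2$ a further case analysis is needed. The key case has two columns with distinct robust images of the same index whose balanced loci meet in dimension at least $\phi n$, which lowers the count by $2^{-\phi n/2}$. This is where $\alpha+\beta<1$ is really used, not through a single factor $(1-\varepsilon')^{(1-\alpha-\beta)n}$.
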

For example, the conditions of Theorem \ref{thmdependent} will be satisfied by any $M(n)$ with columns given by examples (I), (II), and (III), as long as there is a positive proportion of columns which are not as in (I) and a positive proportion of columns which are not as in (II). If all columns of $M(n)$ are as in (I) or all columns of $M(n)$ are as in (II), then $M(n)$ cannot have full rank because its rows will be linearly dependent. Another way to view this is that the columns of $M(n)$ will all be in the same $(n-1)$-dimensional submodule $W_1 = W_2 = \cdots  = W_{n+u}$. This explains the necessity of Theorem \ref{thmdependent} having a condition about the existence of a spanning set of $\Z_p^n$ whose elements are not in $W_j$ for too many indices $j$, as in (\ref{eqweirdcondition}).

We now see that Theorem \ref{mainthm} follows as a direct corollary of Theorem \ref{thmdependent}. Take $A_j = I_n$ to be the identity matrix for all $j \in [n+u]$, so that $M(n)_{j} = \tilde M(n)_j$ and $v^j_i = e_i$. Let $u_i = e_i$ for $i \in [n]$. Then, $u_i \in W_j$ if and only if $M(n)_{ij}$ is $\varepsilon$-balanced. So, the assumptions of Theorem \ref{thmdependent} reduce to $M(n)$ having at most $\alpha n$ $\varepsilon$-degenerate entries in each column and at most $\beta n$ $\varepsilon$-degenerate entries in each row. So Theorem \ref{thmdependent} implies Theorem \ref{mainthm}.

\subsection{Methods and Outline}
Our proofs use the moment method, as developed by Wood in \cite{Wood14} and \cite{Wood15}. In particular, we compute the $G$-moment $\E(\#\Sur(\cok(M(n)),G))$ for any finite abelian $p$-group $G$, in the limit as $n \to \infty$. Section \ref{sectmoments} states our results about these moments, and justifies that they imply Theorem \ref{thmdependent} and Theorem \ref{theorembandmatrix}. As in \cite{Wood15}, we compute these moments by replacing ${F \in \Sur(\cok(M(n)),G)}$ by $F \in \Sur((\Z/p^k\Z)^n, G)$ and estimating $\P(F(M) = 0) = \prod_{j = 1}^{n+u}\P(F(M_j) = 0)$. Thus, our proofs rely heavily upon the independence of different columns of $M$; it would be interesting to try to weaken this assumption using the methods in \cite{Wood14}. We end Section \ref{sectmoments} by fixing lots of notation.

In Section \ref{sectcolumns}, we will bound $\P(F(M_j) = 0)$ for $F \in \Sur(V,G)$ by proving inverse Littlewood-Offord theorems (Lemma~\ref{lemmacodes} and Lemma~\ref{lemmadepthprobabilitycolumns}). Our bounds will depend on how ``good'' $F$ is for the column $M_j$, which is quantified by whether $F$ is a \emph{code for $M_j$} and the \emph{depth of $F$ for $M_j$}. The definitions of code and depth for $M_j$ are the same as the definitions given in \cite{Wood15}, after replacing $M_j$ by $\tilde M_j =A_{j}M_j$ to make its entries independent, and removing all $\varepsilon$-degenerate entries of $M_j$. We also define the \emph{robust image of $F$ for $M_j$} to be a subgroup $H \leq G$ of index $D$ such that $F$ is robustly surjective onto $H$, and define whether it is \emph{proper}. Our proofs of Lemma~\ref{lemmacodes} and Lemma~\ref{lemmadepthprobabilitycolumns} are quite similar to the proofs of \cite[Lemma 2.1 and Lemma 2.7]{Wood15}. While \cite{Wood15} uses codes of distance $\delta n$ for some small $\delta>0$, we use codes of distance $\log(n)^{\lambda}$ for $\lambda >1$, which is necessary for the proof of Theorem \ref{theorembandmatrix}.

As in \cite{Wood15}, the main contributions to the moments come from codes. In Section \ref{sectoverview}, we describe a stratification of the $F$ which are not codes for all columns of $M$. We sort these $F$ by the number of columns for which they have each robust image, and the number of columns they have proper robust image for. Proposition~\ref{mainprop}(a) and (b) say that the $F$ in each category will have vanishing contributions to the moments, with $M$ distributed as in Theorem \ref{thmdependent} in (a) and Theorem \ref{theorembandmatrix} in (b). We prove that Theorem \ref{thmdependent} and Theorem~\ref{theorembandmatrix} follow from Proposition \ref{mainprop}(a) and (b), respectively.

In Section \ref{sectmainprop}, we prove Proposition \ref{mainprop}(a). This is done by bounding $\P(FM=0)$ for each category of $F$, and then bounding the number of $F$ in each category. Bounding the number of $F$ in each category is quite subtle; the main idea is that if $F$ has robust image $H$ for $M_j$, we ``almost'' have $F(W_j) \subseteq H$. By using these restrictions for many columns which have different values of $W_j$, we are able to bound the number of choices of $F(u_i)$ for each value of $u_i$, and thus bound the number of $F$. This work uses the full strength of the assumption in (\ref{eqweirdcondition}). The main challenges are that $M$ has a very general form so we do not know what each column looks like, and that each column $M_j$ of $M$ behaves well with respect to a specific basis $\{v_i^j\}$. 

In Section \ref{sectmainpropband}, we prove Proposition \ref{mainprop}(b). This follows a similar outline to Section \ref{sectmainprop}; again the technical lemma involves bounding the number of $F$ in each category. The main challenge is that $M$ may have very few $\varepsilon$-balanced entries, so each submodule $W_j$ may have very low dimension. So, we have to study all $n$ columns of $M$ at once, as opposed to in Proposition \ref{mainprop}(a), where the number of columns we use is independent of $n$. This makes bounding error terms very subtle, which is where we use the notion of proper robust image and the specific structure of band matrices. For example, Lemma \ref{lemma7}, a peculiar covering lemma about balls on the path graph $P_n$, is crucial for our proof. 

While the proof of Theorem \ref{thmdependent} is very general, the proof of Theorem \ref{theorembandmatrix} uses the structure of band matrices very heavily. This illustrates that the methods in Section \ref{sectcolumns} and Section \ref{sectoverview} can be applied to both general and specific random matrix ensembles. So, the main subtlety in showing that the cokernels of other random matrix ensembles will approach the Cohen--Lenstra distribution is to prove results similar to Proposition \ref{mainprop}, using similar methods to Section \ref{sectmainprop} or \ref{sectmainpropband}.

\section{The Moment Method}\label{sectmoments} We will prove Theorem \ref{thmdependent} and Theorem \ref{theorembandmatrix} using the moment method, which was developed by Wood in \cite{Wood14} and \cite{Wood15}. In this section, we state our key results about moments and use the machinery in \cite{Wood15} to show that they imply Theorem \ref{thmdependent} and Theorem \ref{theorembandmatrix}.

Let $M$ be a random matrix valued in $\M_{n \times (n+u)}(\Z/a\Z)$ for $a$ a power of $p$, formed by reducing each of the entries of $M(n)$ from Theorem \ref{thmdependent} from $\Z_p$ into $\Z/a\Z$. We now bound the $G$-moments $\E(\#\Sur(\cok(M),G))$ for any finite abelian $p$-group $G$. As we always have $a \cok(M) = 0$, these moments will only be nonzero when $G$ has exponent dividing $a$, i.e. $aG = 0$. 

\begin{theorem}\label{thmmoments}
Let $p$ be a prime and $a$ be a power of $p$. Follow the conditions of $(\star)$, except with all instances of $M(n)$, $\tilde{M}(n)$, and $\Z_p$ replaced by $M$, $\tilde{M}$, and $\Z/a\Z$, respectively. Let $G$ be a finite abelian group with exponent dividing $a$ and let $\alpha,\beta >0$ be real numbers such that $\alpha + \beta <1$. Assume that at most $\alpha n$ entries of $\tilde M_j$ are $\varepsilon$-degenerate for all $j \in [n+u]$. Assume that there exists a basis $\{u_1,\ldots,u_n\}$ for $(\Z/a\Z)^n$ such that for all $i \in [n]$ we have $\#\{j \in [n+u]:u_i \not \in W_j\} \leq \beta n$. Then, there exist $K,c > 0$, which are independent of $n$, such that
    \[\big|\E(\#\Sur(\cok(M), G)) -|G|^{-u}\big| \leq K\exp\left(-c\log (n)^2\right).\]
\end{theorem}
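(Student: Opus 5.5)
We follow Wood's moment method from \cite{Wood15}. Write $V=(\Z/a\Z)^n$. Every surjection $\cok(M)\to G$ lifts uniquely to some $F\in\Sur(V,G)$ with $FM=0$. Since the columns are independent,
\[\E(\#\Sur(\cok(M),G))=\sum_{F\in\Sur(V,G)}\prod_{j=1}^{n+u}\P(F(M_j)=0).\]
Because $A_j$ is invertible, $F(M_j)=0$ is equivalent to $(F A_j^{-1})(\tilde M_j)=0$, and $\tilde M_j$ has independent entries. Concretely, $F(M_j)=\sum_i (\tilde M_j)_i F(v_i^j)$, so for column $j$ the relevant data is the tuple $(F(v_i^j))_i$. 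We discard the coordinates at which $(\tilde M_j)_i$ is $\varepsilon$-degenerate; conditioning on those entries only shifts the target by a constant. At most $\alpha n$ coordinates are removed, so at least $(1-\alpha)n$ balanced coordinates remain. On this data we define \emph{code of distance $w$ for $M_j$} and \emph{depth for $M_j$} exactly as in \cite{Wood15}, and we take $w=\log(n)^{\lambda}$ with $\lambda>1$ a fixed constant.

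\textbf{Step 1 (column estimates).} This is the analogue of \cite[Lemmas 2.1 and 2.7]{Wood15}.

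If $F$ is a code for $M_j$, then $|\P(F(M_j)=0)-|G|^{-1}|\le \exp(-c'\log(n)^{\lambda})$. The proof is Fourier expansion over $\hat G$: a code has, for every nontrivial character, at least $w$ balanced coordinates with nonzero image.

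If $F$ instead has robust image $H$ of index $D>1$ for $M_j$, then $\P(F(M_j)=0)\le (1-\epsilon')\,D/|G|$ in the proper case, and a similar bound with an additional $\exp(-c\log(n)^\lambda)$ error otherwise.

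Multiplying over $n+u$ columns, the codes for every column contribute $|G|^{-u}\bigl(1+O(\exp(-c\log(n)^2))\bigr)$, once we know that all but an $\exp(-c\log(n)^2)$ fraction of $\Sur(V,G)$ are codes for all columns. That last fact is a union bound: a non-code is determined by a set of at most $\log(n)^\lambda$ coordinates together with a proper subgroup, which gives roughly $\binom{n}{\log(n)^{\lambda}}|G|^{\log(n)^\lambda}|H|^{n}$ choices, and this must be summed over columns.

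\textbf{Step 2 (stratification).} Each non-code $F$ is sorted by a profile: for each subgroup $H<G$, the number of columns for which $F$ has robust image $H$, together with the number of proper ones. Call these counts $m_H$. From Step 1, the probability contribution of $F$ is at most
\[|G|^{-n-u}\prod_H \bigl(D_H(1-\epsilon')\bigr)^{m_H},\]
up to harmless factors. It then suffices to show that the number of $F$ with a given profile is at most $|G|^{n}\prod_H D_H^{-m_H}$ times a factor that is exponentially small in $\sum_H m_H$, or at least $\exp(-c\log(n)^2)$ small. The number of profiles is polynomial in $n$, so this will be absorbed. This is where the hypotheses $\alpha+\beta<1$ and the basis $u_1,\dots,u_n$ enter.

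\textbf{Step 3 (counting $F$; main obstacle).} If $F$ has robust image $H$ for column $j$, then $F(v_i^j)\in H$ for all but at most $\log(n)^\lambda$ balanced indices $i$. So $F(W_j)$ lies in $H$ up to the images of $O(\log(n)^\lambda)$ generators, which are "exceptional". We pick a bounded number of such columns $j_1,\dots,j_k$, chosen greedily, that share the robust image $H$.

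Each $u_i$ fails to lie in $W_{j}$ for at most $\beta n$ values of $j$. Each column has at least $(1-\alpha)n$ balanced directions. Since $\alpha+\beta<1$, a double count shows that for most $i$, $u_i\in W_{j_\ell}$ for some chosen $\ell$. For those $i$, $F(u_i)$ is forced into $H$ modulo the span of the exceptional images, which leaves about $|H|$ choices instead of $|G|$. The remaining $i$, together with the exceptional data, cost at most $\binom{n}{\log(n)^\lambda}|G|^{O(\log(n)^\lambda)}$.

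This gives a saving of $D^{-(1-\gamma)n}$ for some $\gamma<1$. That saving is compared with the probability gain, which grows with the number of columns $m_H$. If $m_H$ is small, we instead use that the remaining columns are codes for $F$, and that a code for a column fails to vanish with probability only about $1-|G|^{-1}$.

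The delicate part is making the double-counting uniform when the $W_j$ vary with $j$ and are expressed in different bases $\{v_i^j\}$. Specifically, we need $F(u_i)$ to be controlled by the values $F(v_i^j)$ on the balanced coordinates. Since $u_i\in W_j$ is a linear combination of balanced $v^j_{i'}$, we get $F(u_i)\in H+\langle F(v^j_{i'}) : i' \text{ exceptional}\rangle$, which is exactly what is needed.

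Summing over profiles then yields total error $K\exp(-c\log(n)^2)$, since $\lambda>1$ makes $\log(n)^{\lambda}\cdot\log n$ terms lose to $\exp(-c\log(n)^2)$ only when $\lambda$ is chosen appropriately. We take $\lambda=2$ for the codes bound and handle the remaining factors by adjusting $c$.
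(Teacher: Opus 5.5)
\textbf{There is a genuine gap.} Your setup matches the paper: Wood's moment method, codes of distance $\log(n)^\lambda$ defined after discarding the degenerate coordinates of $\tilde M_j$, the two column estimates, and a stratification of non-codes by robust-image profile. The gap is in the step that carries the theorem, where you compare the number of $F$ in a stratum with $\P(FM=0)$.

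\textbf{The probability bound in Step 2.} You give every non-code column a factor $(1-\epsilon')$. That is false here. Suppose $F$ has robust image $H$ for $M_j$ with $H=FA_j^{-1}(V_{\setminus\tau_j})$; this is a nonproper robust image, and it can occur as soon as $\tau_j\neq\emptyset$. Then conditioning on the degenerate coordinates gains nothing, and Lemma~\ref{lemmadepthprobabilitycolumns} gives only $D|G|^{-1}+\exp(-\varepsilon\log(n)^\lambda/a^2)$.

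Without that free factor, Step 3 does not close. Your count for columns sharing one robust image $H$ is about $|G|^nD^{-(1-\gamma)n}$, while the probability carries $D^{m_H}$. When $m_H$ is close to $n+u$ (for example, $F$ has robust image $H$ for almost every column), the product is not small. Counting cannot do much better than $D^{-n}$ there, since such strata can contain about $|H|^n$ maps.

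The missing idea is the paper's Case 2. Since $F$ is surjective and $\{u_i\}$ is a basis, some $u_i$ has $F(u_i)\notin H$. Since $u_i\in W_j$ for all but $\beta n$ columns, at least $(\theta-\beta)n$ of the columns with robust image $H$ satisfy $F(W_j)\not\subseteq H$, i.e.\ have proper robust image. The saving $(1-\varepsilon)^{(\theta-\beta)n}$ therefore comes from the probability side, not the count.

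\textbf{Spread-out profiles.} Your covering argument needs a single $H$ with $m_H>\beta n$. It says nothing when the non-code columns are spread over several subgroups, of equal or different index, none carrying $\beta n$ columns. The paper handles this as follows.
\begin{itemize}
\item It defines $D_1$ as the maximal depth and $D_2$ as a depth attained by at least $\theta n$ columns, with $\beta<\theta<1-\alpha$.
\item It counts $F|_{W_{j_0}}$ for one depth-$D_1$ column, whose balanced locus has dimension at least $(1-\alpha)n$.
\item It covers the remaining at most $\alpha n$ vectors $u_i$ greedily by columns of depth at least $D_2$, with arbitrary robust images. Each round shrinks the uncovered set by a factor $\beta/\theta$.
\end{itemize}
This gives $N\le K(|G|/D_1)^{(1-\alpha)n}(|G|/D_2)^{\alpha n}e^{\zeta n}$, to be compared with $P\le K(D_1/|G|)^{\theta n}(D_2/|G|)^{(1-\theta)n}(1-\varepsilon)^r$. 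The hypothesis $\alpha+\beta<1$ enters by combining $\theta>\beta$ (needed for the covering) with $\theta<1-\alpha$ (needed for this comparison). The comparison closes when $D_1>D_2$.

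When $D_1=D_2=D$, you need, besides Case 2, the case of many depth-$D$ columns with at least two distinct robust images $H_0\neq H$. There one picks two such columns whose balanced loci meet in dimension at least $\phi n$. This forces $F$ into $H_0\cap H$ on a linear-dimensional subspace and improves the count by $2^{-\phi n/2}$. None of these ingredients appears in your sketch.
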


Now, let $M$ be a random matrix valued in $\M_{n \times n}(\Z/a\Z)$ for $a$ a power of $p$, formed by reducing each of the entries of $M(n)$ from Theorem \ref{theorembandmatrix} modulo $a$. We again bound the $G$-moments of $\cok(M)$.

\begin{theorem}\label{thmmomentsband}
Let $p$ be a prime and take $a$ to be a power of $p$. Let $\delta, \varepsilon > 0$ be real numbers and $G$ be a finite abelian group with exponent dividing $a$. Let $M$ be a random matrix valued in $\M_{n \times n}(\Z/a\Z)$ with independent entries such that $M_{ij}$ is $\varepsilon$-balanced if $|i-j| \leq \log(n)^{1+\delta}$ and arbitrary otherwise. Then, there exist $K,c > 0$, which are independent of $n$, such that
    \[\big|\E(\#\Sur(\cok(M), G)) -1\big| \leq K\exp\left(-c\log (n)^{1 + \delta/2}\right).\]
\end{theorem}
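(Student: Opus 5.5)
Write $V=(\Z/a\Z)^n$. Following Wood's moment method, we expand
\[\E(\#\Sur(\cok(M),G))=\sum_{F\in\Sur(V,G)}\P(FM=0)=\sum_{F\in\Sur(V,G)}\prod_{j=1}^{n}\P(F(M_j)=0),\]
using independence of the columns. We then split the sum over $F$ according to how $F$ interacts with each column. For column $j$, the relevant coordinates are the $\varepsilon$-balanced entries $M_{ij}$ with $|i-j|\le w:=\log(n)^{1+\delta}$. These span the window $W_j=\langle e_i: |i-j|\le w\rangle$. Call $F$ a \emph{code} for $M_j$ if the restriction of $F$ to this window cannot be killed by deleting $\log(n)^{\lambda}$ coordinates, for a fixed $1<\lambda<1+\delta$ (say $\lambda=1+\delta/2$). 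This is where the error rate $\exp(-c\log(n)^{1+\delta/2})$ should come from.

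\textbf{Codes.} By the inverse Littlewood--Offord bound for codes (the analogue of \cite[Lemma 2.1]{Wood15}, with distance $\log(n)^\lambda$ instead of $\delta n$), each code column satisfies
\[\P(F(M_j)=0)=|G|^{-1}+O\bigl(\exp(-c\log(n)^{\lambda})\bigr).\]
Hence, if $F$ is a code for every column,
\[\prod_j\P(F(M_j)=0)=|G|^{-n}\bigl(1+O(n e^{-c\log(n)^\lambda})\bigr).\]
Since $\#\Sur(V,G)=|G|^n(1+O(e^{-cn}))$, and the non-code $F$ are a vanishing fraction (shown below), the codes contribute $1+O(e^{-c\log(n)^{1+\delta/2}})$.

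\textbf{Non-codes.} For $F$ that fail to be codes for some columns, we stratify by robust image. For each column $j$, let $H_j\le G$ be the robust image of $F$ on the window, of index $D_j$. We record which columns have proper robust image ($H_j\ne G$) and which have full robust image but are not codes (depth). The depth lemma (analogue of \cite[Lemma 2.7]{Wood15}) gives
\[\P(F(M_j)=0)\le D_j|G|^{-1}+\text{small}.\]
Non-code columns with robust image $G$ still give at worst $(1-\varepsilon)$-type bounds, with an explicit gain relative to a counting loss.

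\textbf{Counting (main step).} We must bound the number of $F$ in each stratum so that this count, times the probability bound, is summable and negligible. The key structural input is that if $F$ has robust image $H_j$ on the window around $j$, then $F(e_i)\in H_j$ for all but $\log(n)^\lambda$ indices $i$ with $|i-j|\le w$. We cover $[n]$ by windows along the path graph, using a covering lemma for balls on $P_n$ (the paper's Lemma~\ref{lemma7}). From this we conclude two things.

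First, a run of consecutive columns with proper robust image forces $F(e_i)$ into a proper subgroup for most $i$ in a union of windows. This saves a factor roughly $D^{-(\text{number of such }i)}$ in the count, which beats the probability loss of $D$ per column because windows overlap with multiplicity about $w$ while exceptions are only about $\log(n)^\lambda\ll w$.

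Second, columns that are not codes constrain $F$ on their windows to be nearly supported on few coordinates. Such $F$ number at most $\binom{2w}{\log(n)^\lambda}|G|^{\log(n)^\lambda}$ per window, against a probability gain of $(1-\varepsilon)^{\Omega(w)}$. Since $w=\log(n)^{1+\delta}\gg\log(n)^\lambda\log\log n$, this is a net gain.

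Summing over strata (the number of strata is at most $2^{O(n/w)}$ after grouping columns into blocks of length $w$) then gives total non-code contribution $O(\exp(-c\log(n)^{1+\delta/2}))$.

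The main obstacle is this counting step. Because each column sees only about $2w$ coordinates, the constraints must be chained across all $n$ columns simultaneously rather than a bounded number of them. Controlling the accumulated exceptional sets and the index losses $D_j$ uniformly requires the path-graph covering argument together with the distinction between proper and non-proper robust image. I would first try grouping columns into disjoint blocks of length $w/2$ and assigning each block a single dominant robust image. I would then handle transitions between different images at block boundaries, paying only a $\log(n)^\lambda$-size cost per transition.
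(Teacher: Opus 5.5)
Your framework matches the paper's, and your treatment of codes is fine: the moment expansion, codes of distance $\log(n)^{\lambda}$ with $\lambda=1+\delta/2$, robust images, and a covering lemma on $P_n$. The gap is the counting step. It carries the whole theorem, you leave it as an open ``obstacle'', and the ingredients you list for it are partly wrong. The missing idea is the paper's notion of \emph{proper} robust image, which means $H_j\subsetneq F(V_{\setminus\tau_j})$, not $H_j\neq G$. That dichotomy is what makes the count work:
\begin{itemize}
\item For a nonproper column, $F(e_i)\in H_j$ for \emph{every} $i$ in the window. Counting $F$ there costs nothing, since there is no exceptional set $\sigma$ to choose. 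But Lemma~\ref{lemmadepthprobabilitycolumns} then gives only $D_j/|G|+{}$small, with no factor $1-\varepsilon$.
\item For a proper column, some $\varepsilon$-balanced $k\in\sigma$ has $F(e_k)\notin H_j$, which yields the factor $1-\varepsilon$. But counting $F$ on that window via Lemma~\ref{lemmarobustimage} costs about $\exp(\log(n)^{1+3\delta/4})$.
\end{itemize}
With your definition, the claims that non-code columns give ``$(1-\varepsilon)$-type bounds'' and that each non-code window gains $(1-\varepsilon)^{\Omega(w)}$ are false. If $F$ maps a long stretch of coordinates exactly into an index-$D$ subgroup, every column there is a non-code with $\P(F(M_j)=0)$ as large as $D/|G|$ and no $(1-\varepsilon)$ factor. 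Also, your category ``full robust image but not a code'' is empty, since depth $1$ implies code.

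Your accounting also misplaces where the gain comes from. Window overlap gives no extra saving beyond a boundary term: the count saving offsets the probability loss. The paper shows
\[\#\Sur_{\{n_k\},r}(V,G)\le K\prod_k|H_k|^{n_k}e^{Cr/\log(n)^{\delta/4}}\quad\text{against}\quad \P(FM=0)\le K\prod_k|H_k|^{-n_k}(1-\varepsilon)^r.\]
So every counting cost, whether exceptional sets or the locations where the image changes, must be charged to $r$. The paper does this in three steps:
\begin{itemize}
\item Lemma~\ref{lemmaregions}: each boundary of a nonproper region $T_k$ is followed by at least $\log(n)^{\lambda}/2$ proper columns, so the $T_k$ have only $n^{8rs/\log(n)^{\lambda}}$ possibilities.
\item Lemma~\ref{lemma7}: only $O(r/w_n)$ proper columns ever need Lemma~\ref{lemmarobustimage}.
\item A separate case $r<\tfrac18\log(n)^{\lambda}$: there $T_k=\emptyset$ for $k<s$, so $n_s\ge n-r$, and Lemma~\ref{lemmanoncodes} finishes.
\end{itemize}
Your block plan has neither mechanism. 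The $2^{O(n/w)}$ strata cannot be union-bounded against $e^{-c\log(n)^{1+\delta/2}}$, since $n/\log(n)^{1+\delta}\gg\log(n)^{1+\delta/2}$. Paying an exceptional-set cost of $\exp(O(\log(n)^{\lambda}\log\log n))$ in each of up to $n/w$ blocks would total $\exp(O(n\log\log n/\log(n)^{\delta/2}))$. You never say what gain absorbs this, or the per-transition cost.
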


We now see that Theorem \ref{thmmoments} implies Theorem \ref{thmdependent} and Theorem \ref{thmmomentsband} implies Theorem \ref{theorembandmatrix}. Let $M(n)$ be the random matrix in Theorem \ref{thmdependent} or Theorem \ref{theorembandmatrix}, and let $M$ be formed by reducing its entries mod $a$, so that $M$ is as in Theorem \ref{thmmoments} or Theorem \ref{thmmomentsband}. For any finite abelian $p$-group $B$, this implies that ${\P(\cok(M) \simeq B) = \P(\cok(M(n))\otimes \Z/a\Z \simeq B)}$. Let $G$ be a finite abelian group with exponent dividing $a$. For any $p$-group $H$, surjections from $H$ to $G$ factor through $H \otimes \Z/a\Z$ so $\#\Sur(H,G) = \#\Sur(H \otimes \Z/a\Z, G)$. Therefore, $\E(\#\Sur(\cok(M), G)) = \E(\#\Sur(\cok(M(n)), G))$. 

So, Theorem~\ref{thmmoments} and Theorem \ref{thmmomentsband} ($u=0$ in this case) yield \[\lim_{n \to \infty}\E(\#\Sur(\cok(M(n)),G)) = \lim_{n \to \infty}\E(\#\Sur(\cok(M),G))=|G|^{-u}.\]  
Following \cite[Section 3]{Wood15}, these limiting $G$-moments of $\cok(M(n))$ determine the limiting distribution of $\cok(M(n))$ and force it to approach the Cohen--Lenstra distribution. This yields Theorem \ref{thmdependent} and Theorem~\ref{theorembandmatrix}. 

\begin{remark}
    It is possible to prove Theorem \ref{thmmoments} with a bound of $Ke^{-cn}$, by using the same logic as our proof with codes of distance $\delta n$ rather than $\log(n)^{2}$. We work with codes of distance $\log(n)^{\lambda}$ for $\lambda > 1$ throughout because this is necessary to prove Theorem \ref{thmmomentsband}.
\end{remark}

Let $M$ be as in Theorem \ref{thmmoments} or Theorem \ref{thmmomentsband}. Let $V = (\Z/a\Z)^n$ and $W = (\Z/a\Z)^{n+u}$, so $M \in \Hom(W,V)$. Surjections from $\cok(M) = V/MW$ to $G$ are equivalent to surjections $F: V \twoheadrightarrow G$ such that $F(MW) = 0$, or equivalently $FM = 0$, where $FM \in \Hom(W,G)$. Using the independence of the columns $M_1,\ldots,M_{n+u}$ of $M$ gives 
\begin{equation}\label{eq1}
    \E(\#\Sur(\cok(M), G)) = \sum_{F \in \Sur(V,G)} \P(FM=0) = \sum_{F \in \Sur(V,G)}\prod_{j=1}^{n+u} \P(F(M_j) = 0).
\end{equation} 

The rest of the paper is devoted to studying the right side of (\ref{eq1}) to prove Theorem \ref{thmmoments} and Theorem~\ref{thmmomentsband}.

\subsection{Notation and Conventions}
Let $p$ be a prime, $a$ be a power of $p$, $n>0$ be an integer, $u \geq 0$ be an integer, $\varepsilon>0$ and $\lambda >1$ be real numbers, and $G$ be a finite abelian group with exponent dividing $a$. Let $\{e_1,\ldots,e_n\}$ be the canonical basis for $V = (\Z/a\Z)^n$. When we work towards proving Theorem \ref{thmmoments}, let $M,\alpha,\beta,\tilde{M}_j,v_i^j,W_j,u_i$ be as in Theorem \ref{thmmoments} and take $\lambda =2$. When we work towards proving Theorem \ref{thmmomentsband}, let $\delta >0$ and $u =0$ and $M$ as in Theorem \ref{thmmomentsband} and take $\lambda =1 +\delta/2$. Whenever we refer to a random matrix $M$ valued in $\M_{n \times (n+u)}(\Z/a\Z)$, we are implicitly assuming that each of its columns $M_j$ is independent and $(\varepsilon,\tau_j,A_j)$-balanced (see Definition \ref{defepsbalanced}) and we are implicitly fixing $(\varepsilon,\tau_j,A_j)$ for each $j \in [n+u]$. This is necessary because any matrix $M$ may satisfy the conditions of either theorem with many values of $(\varepsilon,\tau_j,A_j)$ for each $j \in [n+u]$. Readers who are only interested in matrices with independent entries, i.e. Theorem \ref{mainthm} and Theorem \ref{theorembandmatrix}, should take $A_j = I_n$ to be the identity matrix throughout.

We now define some terms related to how we will do linear algebra over the ring $\Z/a\Z$. We will work with $(\Z/a\Z)$-submodules $W\subseteq V$. For elements $x_1,\ldots,x_\ell$ in $V$, let $\langle x_1,\ldots,x_\ell\rangle$ denote the submodule generated by $x_1,\ldots,x_\ell$, i.e. the set of $(\Z/a\Z)$-linear combinations of $x_1,\ldots,x_\ell$. For any set $\rho \subset [n]$, let $V_{\setminus \rho} = \langle e_i: i \not \in \rho\rangle$. For $V_1,V_2 \subseteq V$, let $V_1 + V_2 \subseteq V$ denote the module generated by the generators of $V_1$ and $V_2$. If $W \subseteq V$, define $\ol{W} \subseteq \F_p^n$ by reducing $W$ mod $p$. For $w \in W$, similarly define $\ol{w} \in \ol{W}$ by reducing $w$ mod $p$. We say that submodules or vectors are linearly independent if their reductions mod $p$ are linearly independent. Define the dimension $\dim W \coloneq \dim_{\F_p}\ol{W}$. 

We say that $W\subseteq V$ is a \emph{summand} if $V = W \oplus W'$ for some submodule $W'$. There are many equivalent definitions of summand, which we will use: 
\begin{itemize}
    \item $W \simeq (\Z/a\Z)^{r}$ for some $r \geq 0$.
    \item $W \simeq (\Z/a\Z)^{\dim W}$.
    \item $W = \langle w_1,\ldots,w_{\dim W}\rangle$ for some $w_1,\ldots,w_{\dim W}$ in $W$.
    \item $W\subseteq V$ has the same number of minimal generators as its reduction $\ol{W} \subseteq \F_p^n$.
    \item $W = \langle w_1,\ldots,w_r\rangle$ where $\{w_1,\ldots,w_r\}$ is a subset of a basis for $V$.
\end{itemize} If $W\subseteq V$ is a summand, then $\{w_1,\ldots,w_r\} \subseteq W$ is a basis of $W$ if and only if $r = \dim W$ and ${W = \langle w_1,\ldots,w_r\rangle}$. This is equivalent to $\{\ol{w_1},\ldots,\ol{w_r}\}$ forming an $\F_p$-basis for $\ol{W}$. Hence, if $W' \subseteq W$ are summands then any basis for $W'$ can be extended to a basis of $W$.

We say that $H \leq G$ if $H$ is a subgroup of $G$, and $H < G$ if $H$ is a proper subgroup of $G$. Let $[n]$ denote $\{1,\ldots,n\}$ and let $I_n$ be the $n \times n$ identity matrix. We use $\Hom(A,B)$ and $\Sur(A,B)$ to denote the homomorphisms and surjective homomorphisms, respectively, from $A$ to $B$. We sometimes write $\exp(x)$ for the exponential function $e^x$, and $\log$ with no base written is assumed to be $\ln$. We say $f(n) \in o(g(n))$ if $\lim_{n \to \infty} \frac{f(n)}{g(n)} = \infty$.

\section{Bounding \texorpdfstring{$\P(F(M_j) = 0)$}{P(FM = 0)}}\label{sectcolumns}

In this section, we will work to understand the factors on the right side of \eqref{eq1}, and partition the ${F\in \Sur(V,G)}$ based on how `good' they are for a column. This section generalizes many of Wood's definitions and lemmas in \cite[Section 2]{Wood15} to be useful in our setting, so we will often note which aspects of Wood's definitions and lemmas we are modifying. Throughout this section, fix $\lambda > 1$. We will take $\lambda = 2$ in our proof of Theorem \ref{thmmoments} and $\lambda = 1 +\delta/2$ in our proof of Theorem \ref{thmmomentsband}. 

We first define what it means for a random vector in $V$ to be $(\varepsilon,\tau,A)$-balanced, which allows for some entries of the vector, indexed by $\tau$, to not be $\varepsilon$-balanced and allows for dependence between entries if $A \neq I_n$. 

\begin{definition}\label{defepsbalanced} 
Let $X$ be a random vector valued in $V$ (with entries not necessarily independent). Let $\varepsilon > 0$ and $\tau \subset [n]$ and $A \in \GL_n(\Z/a\Z)$. We say that $X$ is \emph{$(\varepsilon,\tau, A)$-balanced} if $\tilde{X} = AX$ has independent entries and \[\tau \supseteq \{i: \tilde{X}_i \text{ is not }\varepsilon\text{-balanced} \}.\] 
\end{definition}

\begin{remark}
Following $(\star)$, such a column will have $\varepsilon$-balanced locus $W = A^{-1}V_{\setminus \tau}$. This submodule will appear in each definition in this section, which suggests that it is a natural submodule to consider. One should think of $W$ as a submodule of $V$ on which $X$ ``behaves well.''
\end{remark}

We now define what it means for a surjection $F$ to be a code for a random vector $X$ in $V$, when the entries of $X$ may not be independent or $\varepsilon$-balanced. We will later see that the vast majority of surjections $F$ are codes for $M$, i.e. codes for all columns of $M$.

\begin{definition}\label{defcode}
   Let $X$ be $(\varepsilon,\tau,A)$-balanced and fix $\lambda >1$. Then, we say that $F \in \Hom(V,G)$ is a \emph{code for $X$} if for all $\sigma \subset [n]$ with $|\sigma| < \log(n)^\lambda$, then $FA^{-1}(V_{\setminus\sigma \cup \tau}) = G$. For a random matrix $M$ valued in $\M_{n \times (n+u)}(\Z/a\Z)$, we say that $F$ is a \emph{code for $M$} if it is a code for all columns of $M$.
\end{definition}

Notice that this definition is the same as \cite[Definition 2]{Wood15} if $\tau = \emptyset$ and $A = I_n$. Definition \ref{defcode} becomes \cite[Definition 2]{Wood15} if we replace $FA^{-1}$ by $F$ and replace $V_{\setminus \tau}$ by $V$. The reason we use $FA^{-1}$ rather than $F$ is that $FX = FA^{-1}\tilde{X}$, and we want to work with $\tilde{X}$ because it has independent entries. We use $V_{\setminus \tau}$ rather than $V$ so that all entries of $\tilde X$ are $\varepsilon$-balanced. Note that in the usage of this definition throughout \cite{Wood15}, $\delta n$ with some small $\delta > 0$ replaces $\log(n)^\lambda$; we have to use codes of distance $\log(n)^\lambda$ rather than $\delta n$ in order to prove Theorem \ref{theorembandmatrix}. The idea of removing the $\varepsilon$-degenerate entries of $X$ to define codes also appears in \cite[Definition 7.6]{KLY24}, which applies when $\tau  = \{1,2,\ldots,m\} \subset [n]$ and $A = I_n$. 

We now generalize \cite[Lemma 2.1]{Wood15} to bound $\P(FX=g)$ when $X$ may have some $\varepsilon$-degenerate entries and dependence between entries, as long as $X$ is $(\varepsilon,\tau,A)$-balanced and $F$ is a code for $X$. 

\begin{lemma}\label{lemmacodes}
    Let $X$ be $(\varepsilon,\tau,A)$-balanced and let $F \in \Hom(V,G)$ be a code for $X$. Then, for all $g \in G$, \[\verts{\P(FX=g) - |G|^{-1}} \leq \exp(-\varepsilon\log(n)^\lambda/a^2).\]
\end{lemma}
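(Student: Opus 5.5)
Our plan is to reduce to Wood's argument for \cite[Lemma 2.1]{Wood15} via a Fourier-analytic computation. Write $F' = FA^{-1} \in \Hom(V,G)$, so that $FX = F'\tilde X$ with $\tilde X = AX$ having independent entries, and $\tilde X_i$ is $\varepsilon$-balanced for every $i \notin \tau$. Let $w_i = F'(e_i) \in G$, so $FX = \sum_{i=1}^n \tilde X_i w_i$. By Fourier inversion on $G$,
\[
\P(FX=g) - |G|^{-1} = \frac{1}{|G|}\sum_{\chi \neq 1} \overline{\chi(g)}\,\E\chi(FX) = \frac{1}{|G|}\sum_{\chi\neq 1}\overline{\chi(g)}\prod_{i=1}^n \E\,\chi(w_i)^{\tilde X_i},
\]
using independence of the entries of $\tilde X$. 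Each factor has absolute value at most $1$, so it suffices to show that for every nontrivial character $\chi$ of $G$,
\[
\prod_{i\notin\tau}\bigl|\E\,\chi(w_i)^{\tilde X_i}\bigr| \le \exp(-\varepsilon\log(n)^\lambda/a^2);
\]
averaging over the $|G|-1$ nontrivial characters then yields the claimed bound.

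The first key step uses the code property. Fix $\chi \ne 1$ and let $\sigma = \{i \notin \tau : \chi(w_i) = 1\}$. If $|\sigma| < \log(n)^\lambda$, the code hypothesis gives $F'(V_{\setminus \sigma\cup\tau}) = G$. But $F'(V_{\setminus\sigma\cup\tau})$ is generated by the $w_i$ with $i\notin\sigma\cup\tau$, all of which lie in $\ker\chi$ whenever $\sigma$ were all of $[n]\setminus\tau$. So we argue directly instead: if fewer than $\log(n)^\lambda$ indices $i\notin\tau$ had $\chi(w_i)\ne 1$, we could take $\sigma$ to be those indices; then $F'(V_{\setminus\sigma\cup\tau})\subseteq\ker\chi\ne G$, contradicting the code property. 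Hence there are at least $\log(n)^\lambda$ indices $i \notin \tau$ with $\chi(w_i) \neq 1$.

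The second step is a single-entry estimate. For each such $i$, $\zeta = \chi(w_i)$ is a nontrivial $a$-th root of unity, and $\tilde X_i$ is $\varepsilon$-balanced, taking values in $\Z/a\Z$. We need $|\E \zeta^{\tilde X_i}| \le 1 - \varepsilon/a^2 \le \exp(-\varepsilon/a^2)$. This is exactly the estimate in Wood's proof: since $\tilde X_i$ lies outside any fixed residue class mod $p$ with probability at least $\varepsilon$, there are two values $x,y$ with $\zeta^x\ne\zeta^y$ and suitable probability mass. Convexity, or a direct computation of $|\sum_t q_t \zeta^t|^2$ with $q_t$ the probability masses, then gives a loss proportional to $\varepsilon$ times $|1-\zeta^{d}|^2 \gtrsim 1/a^2$. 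One subtlety is that $\zeta$ may have order $p^k < a$, so $\zeta^t$ depends only on $t$ mod $p^k$. Balancedness mod $p$ still ensures mass at least $\varepsilon$ outside any coset of $p\Z/a\Z$, hence outside any fiber where $\zeta^t$ is constant. We can cite Wood's computation here directly, since it is per-entry and untouched by our modifications.

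Multiplying the single-entry bound over the at least $\log(n)^\lambda$ good indices gives the required product bound, and the lemma follows. The main obstacle is only the careful single-entry bound $1-\varepsilon/a^2$, where the constant must be checked. Everything else is a direct transcription of \cite[Lemma 2.1]{Wood15}, with $F$ replaced by $FA^{-1}$ and $V$ replaced by $V_{\setminus\tau}$.
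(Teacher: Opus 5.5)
Your proposal is correct and follows essentially the same route as the paper. The paper also Fourier-expands (writing characters as $\zeta^{C(\cdot)}$ with $C \in \Hom(G,\Z/a\Z)$), uses the code property to find at least $\log(n)^\lambda$ indices $i\notin\tau$ where the character is nontrivial on $FA^{-1}(e_i)$, and applies Wood's single-entry bound (Lemma~\ref{lemmacrude}). Your first attempt at defining $\sigma$ as the set where $\chi(w_i)=1$ is a false start, but the corrected choice of $\sigma$ as the indices where $\chi(w_i)\neq 1$ is exactly the paper's argument.
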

We use the following estimate in the proof of Lemma \ref{lemmacodes}:
\begin{lemma}\cite[Lemma 4.2]{Wood14} \label{lemmacrude}
    Let $\zeta$ be a primitive $a^{th}$ root of unity. Let $y$ be an $\varepsilon$-balanced random variable valued in $\Z/a\Z$ and let $m$ be an integer such that $\zeta^m \neq 1$. Then $|\E(\zeta^{my})| \leq \exp(-\varepsilon/a^2)$.
\end{lemma}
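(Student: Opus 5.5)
The plan is to bound $|\E(\zeta^{my})|^2$ by symmetrizing with an independent copy of $y$, and then convert the $\varepsilon$-balanced hypothesis into a lower bound on the probability that the difference is \emph{not} $0 \bmod p$.

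\textbf{Step 1 (order of $\zeta^m$).} Set $\omega=\zeta^m$. Since $\zeta^m\neq 1$, the order $d$ of $\omega$ satisfies $d\mid a$ and $d>1$. Because $a$ is a power of $p$, $d$ is a power of $p$ with $d\ge p$, so in particular $p\mid d$. Hence for any integer $z$ with $z\not\equiv 0 \pmod p$ we have $d\nmid z$, so $\omega^z\neq 1$. In that case $\omega^z=e^{2\pi i k/d}$ with $k\not\equiv 0\pmod d$, and so the angle $\theta=2\pi k/d$, taken in $(-\pi,\pi]$, satisfies $|\theta|\ge 2\pi/d\ge 2\pi/a$.

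\textbf{Step 2 (symmetrization).} Let $y'$ be an independent copy of $y$, and set $z=y-y'$ (well defined mod $a$, so $\omega^z$ makes sense). Then
\[
|\E(\omega^y)|^2=\E(\omega^{y})\overline{\E(\omega^{y'})}=\E(\omega^{z}).
\]
The distribution of $z$ is symmetric, so this quantity is real and equals $\E\cos(\theta_z)$, where $\theta_z$ is the angle of $\omega^z$. Therefore
\[
1-|\E(\omega^y)|^2=\E\bigl(1-\cos\theta_z\bigr)\ \ge\ \P(z\not\equiv 0 \bmod p)\cdot \min_{|\theta|\ge 2\pi/a}(1-\cos\theta).
\]

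\textbf{Step 3 (the two estimates).} For the trigonometric factor, the inequality $1-\cos\theta\ge 2\theta^2/\pi^2$ holds for $|\theta|\le\pi$, which gives $1-\cos\theta\ge 8/a^2$ whenever $|\theta|\ge 2\pi/a$.

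For the probability, write $q_t=\P(y\equiv t\bmod p)$. Then
\[
\P(z\equiv 0\bmod p)=\sum_t q_t^2\le \max_t q_t\le 1-\varepsilon,
\]
using the $\varepsilon$-balanced hypothesis. So $\P(z\not\equiv 0 \bmod p)\ge\varepsilon$.

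\textbf{Step 4 (conclusion).} Combining Steps 2 and 3 gives
\[
|\E(\zeta^{my})|^2\le 1-8\varepsilon/a^2\le \exp(-8\varepsilon/a^2).
\]
Taking square roots, $|\E(\zeta^{my})|\le\exp(-4\varepsilon/a^2)\le \exp(-\varepsilon/a^2)$.

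The only genuinely delicate point is Step 1: the balancedness condition only concerns residues mod $p$, while $\omega$ has order $d$, which may be much smaller than $a$. This is reconciled by the fact that every nontrivial order dividing a power of $p$ is divisible by $p$. With that in hand, a difference that is nonzero mod $p$ always produces a nontrivial root of unity, and its angle is bounded away from $0$ by $2\pi/a$ uniformly in $m$.
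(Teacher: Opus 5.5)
Your proof is correct, and it actually gives the stronger bound $|\E(\zeta^{my})|\le \exp(-4\varepsilon/a^2)$.

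The paper does not prove this estimate itself. It is imported verbatim from \cite[Lemma 4.2]{Wood14} and used only as a black box in the proof of Lemma~\ref{lemmacodes}, so there is no internal argument to compare against. Your symmetrization argument therefore supplies a complete self-contained proof. Its steps are: computing $|\E(\omega^y)|^2=\E(\omega^{y-y'})$, bounding $\P(y\equiv y' \bmod p)=\sum_t q_t^2\le 1-\varepsilon$, and using $1-\cos\theta\ge 2\theta^2/\pi^2$ on the event $y\not\equiv y' \bmod p$.

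The only place where the setting matters is Step 1, where you need $p$ to divide the order of $\zeta^m$. This is justified here because the paper defines $\varepsilon$-balanced only for variables valued in $\Z_p$ or its finite quotients, so $a$ is a power of $p$. For a general modulus $a$, the same argument works after choosing $p$ to be a prime dividing the order of $\zeta^m$, provided $y$ is balanced modulo that prime.
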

\begin{proof}[Proof of Lemma \ref{lemmacodes}]
    We follow the proof from \cite[Lemma 2.1]{Wood15}. Let $\zeta$ be a primitive $a^{th}$ root of unity. Let $\tilde{X} = AX$ be $(\varepsilon,\tau,I_n)$-balanced. For $i \in [n]$, let $\tilde{X}_i$ denote the $i^{th} $ entry of $\tilde{X}$. Using the discrete Fourier transform and the independence of the entries $\tilde{X}_i$, \begin{align*}
        \P(FX=g) &= \P(FA^{-1}\tilde X=g) \\
        &= |G|^{-1}\sum_{C \in \Hom(G,\Z/a\Z)} \E(\zeta^{C(FA^{-1}\tilde{X}-g)}) \\ 
        &= |G|^{-1} + |G|^{-1} \sum_{C \in \Hom(G,\Z/a\Z) \setminus \{0\}} \zeta^{C(-g)} \prod_{i=1}^n\E(\zeta^{C(FA^{-1}(e_i))\tilde{X}_i}).
    \end{align*}
    For each $C \in \Hom(G,\Z/a\Z) \setminus \{0\}$, then $\ker(C) \neq G$. Since $F$ is a code for $X$, there must be at least $\log(n)^{\lambda}$ values of $i$ outside $\tau$ such that $FA^{-1}(e_i) \not \in \ker(C)$, i.e. $C(FA^{-1}(e_i)) \neq 0$. For these values of $i$, then $\tilde{X}_i$ is $\varepsilon$-balanced, so we can apply Lemma \ref{lemmacrude} with $m = C(FA^{-1}(e_i))$. For each nonzero $C$, this yields \[\verts{\prod_{i=1}^n\E(\zeta^{C(FA^{-1}(e_i))\tilde{X}_i})} \leq \exp(-\varepsilon \log(n)^{\lambda}/a^2).\] As there are exactly $|G|$ values of $C\in \Hom(G,\Z/a\Z)$, this implies the lemma.
\end{proof}

It is not sufficient to partition the $F$ into codes and non-codes for each column of $M$. So, we extend the definition of the depth \cite[Definition 3]{Wood15} of $F$ in $\Hom(V,G)$ to be with respect to a specific random $(\varepsilon,\tau,A)$-balanced vector $X$. 

\begin{definition}\label{defdepth}
    Let $X$ be $(\varepsilon,\tau,A)$-balanced, fix $\lambda > 1$, and take $F \in \Hom(V,G)$. The \mbox{\emph{depth of $F$ for $X$}} is the maximal integer $D$ such that there exists $\sigma \subset [n]$ with $|\sigma| < D\log(n)^{\lambda}$ and ${D = [G:FA^{-1}(V_{\setminus \sigma \cup \tau})]}$.
\end{definition}

If $\tau = \emptyset$ and $A = I_n$, this almost coincides with \cite[Definition 3]{Wood15}, except $|\sigma| < D\log(n)^{\lambda}$ replaces $|\sigma| < \log_p(D)\delta n$. We use $D$ rather than $\log_p(D)$ to ensure that robust images are unique; see Remark~\ref{robustimageunique}. We use $\log(n)^\lambda$ rather than $\delta n$ in order to prove Theorem \ref{theorembandmatrix}. Our definition is the same as \cite[Definition 3]{Wood15} if we replace $D\log(n)^{\lambda}$ by $\log_p(D)\delta n$, replace $FA^{-1}$ by $F$, and replace $V_{\setminus \tau}$ by $V$. 

\begin{remark}\label{rmkdepth1code}
    If $F$ is not a code for $X$, there exists $\sigma \subset [n]$ with $|\sigma| < \log(n)^{\lambda}$ such that ${FA^{-1}(V_{\setminus \sigma \cup \tau}) < G}$. So, the depth of $F$ for $X$ is at least $[G:FA^{-1}(V_{\setminus \sigma \cup \tau})] >1$. Therefore, if $F$ has depth 1 for $X$ then $F$ is a code for $X$.
\end{remark}

We now define the related notion of robust image, which refines depth by caring about the specific subgroup $FA^{-1}(V_{\setminus \sigma \cup \tau})$ of index $D$ in $G$. We also define of what it means to have proper robust image, which is a subtle definition that plays a crucial role in the proof of Theorem \ref{thmmomentsband}.

\begin{definition}\label{defrobustimage}
    Let $X$ be $(\varepsilon,\tau,A)$-balanced, and let $F$ in $\Hom(V,G)$ have depth $D$ for $X$. The \emph{robust image of $F$ for $X$} is a subgroup $H <G$ of index $D$ where there exists $\sigma \subset [n]$ with $|\sigma| < D\log(n)^{\lambda}$ satisfying $H = FA^{-1}(V_{\setminus \sigma \cup \tau})$. We say that $F$ has \emph{proper robust image} $H$ if, in addition, $H < FA^{-1}(V_{\setminus \tau})$.
\end{definition}

Note that $F$ always has a robust image for $X$ by Definition \ref{defdepth}. 

\begin{remark}\label{robustimageunique}
    We now see that robust images are unique. Let $X$ be $(\varepsilon,\tau,A)$-balanced, and let $F$ in $\Hom(V,G)$ have depth $D$ for $X$. Assume that $F$ has two distinct robust images $H_1$ and $H_2$ for $X$, so $[G:H_1] = [G:H_2] = D$. Let $k \in \{1,2\}$. Then there exists $\sigma_k \subset [n]$ with $|\sigma_k| < D\log(n)^{\lambda}$ such that $FA^{-1}(V_{\setminus \sigma_k \cup \tau}) = H_k$. Let $\sigma = \sigma_1 \cup \sigma_2$. Then $|\sigma| < 2D\log(n)^{\lambda}$ and $FA^{-1}(V_{\setminus \sigma \cup \tau}) \subseteq H_1 \cap H_2$. As $[G:H_1 \cap H_2] \geq 2D$, then $F$ has depth at least $2D$, which is a contradiction.
\end{remark}

We now generalize \cite[Lemma 2.7]{Wood15} to bound $\P(FX=0)$ where $X$ is $(\varepsilon,\tau,A)$-balanced and $F \in \Sur(V,G)$ has depth $D>1$ for $X$. If we take $A = I_n$ and $\tau = \emptyset$ to reproduce the conditions of \cite{Wood15}, Definition \ref{defrobustimage} implies that $F$ always has proper robust image for $X$. This is because $FA^{-1}(V_{\setminus \tau}) = F(V) = G$ and $F$ has robust image $H < G$ of index $D$.

\begin{lemma}\label{lemmadepthprobabilitycolumns}
    Let $X$ be $(\varepsilon,\tau,A)$-balanced. Let $F \in \Sur(V,G)$ have depth $D > 1$ for $X$. Then, \[\P(FX=0) \leq (D|G|^{-1} + \exp(-\varepsilon\log(n)^{\lambda}/a^2)) \cdot \begin{cases}
        (1-\varepsilon) &\text{if $F$ has proper robust image for $X$.} \\
        1 &\text{otherwise.} 
    \end{cases}\]
\end{lemma}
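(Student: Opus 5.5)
Let $H$ be the robust image of $F$ for $X$, witnessed by $\sigma \subset [n]$ with $|\sigma| < D\log(n)^\lambda$ and $H = FA^{-1}(V_{\setminus \sigma\cup\tau})$. The plan follows \cite[Lemma 2.7]{Wood15}: condition on the entries $\tilde X_i$ with $i \in \sigma\cup\tau$, and use only the randomness of the remaining coordinates, which are independent and $\varepsilon$-balanced. Write
\[FX = FA^{-1}\tilde X = Y + Z,\qquad Y=\sum_{i\in\sigma\cup\tau}\tilde X_i\, FA^{-1}(e_i),\qquad Z=\sum_{i\notin\sigma\cup\tau}\tilde X_i\, FA^{-1}(e_i).\]
Then $Z\in H$ always, and $Y$ is independent of $Z$. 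We get $FX=0$ only if $Y\in H$, and then
\[\P(FX=0)=\P(Y\in H)\cdot \P(Z=-y\mid Y=y\in H).\]

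\emph{The factor for $Z$.} View $Z$ as a random vector $F'X'$, where $X'$ is the vector of coordinates $\tilde X_i$ with $i\notin\sigma\cup\tau$ and $F' = FA^{-1}|_{V_{\setminus\sigma\cup\tau}}$ maps onto $H$. The key claim is that $F'$ is a code (in the sense of Definition~\ref{defcode}) of distance $\log(n)^\lambda$ onto $H$: for every $\sigma'$ with $|\sigma'|<\log(n)^\lambda$ we want $FA^{-1}(V_{\setminus \sigma\cup\sigma'\cup\tau}) = H$.

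Suppose instead this image were a proper subgroup $H'<H$. Its index in $G$ is at least $2D$, and $|\sigma\cup\sigma'| < (D+1)\log(n)^\lambda \le 2D\log(n)^\lambda$. Then $F$ would have depth at least $[G:H']$ with witness $\sigma\cup\sigma'$, because $|\sigma\cup\sigma'|<[G:H']\log(n)^\lambda$. This contradicts maximality of $D$, and it is exactly where the linear-in-$D$ form of Definition~\ref{defdepth} is used. With the code property in hand, the Fourier argument of Lemma~\ref{lemmacodes}, run over characters of $H$ (with exponent dividing $a$), gives for every $h\in H$
\[\P(Z=h)\le |H|^{-1}+\exp(-\varepsilon\log(n)^\lambda/a^2) = D|G|^{-1}+\exp(-\varepsilon\log(n)^\lambda/a^2).\]
I expect this code-for-$H$ verification to be the main step needing care. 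One subtlety is that characters of $H$ must be written as $\Z/a\Z$-valued functions; this is fine because $H$ is a subgroup of $G$, which has exponent dividing $a$.

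\emph{The factor for $Y$.} In the non-proper case, bound $\P(Y\in H)\le 1$. In the proper case, $H< FA^{-1}(V_{\setminus\tau})$, so some $i_0\in\sigma\setminus\tau$ has $FA^{-1}(e_{i_0})\notin H$. Condition further on all $\tilde X_i$ with $i\in(\sigma\cup\tau)\setminus\{i_0\}$. Then $Y\in H$ forces $\tilde X_{i_0}\, FA^{-1}(e_{i_0})$ to lie in a fixed coset $c+H$. Let $g$ be the image of $FA^{-1}(e_{i_0})$ in $G/H$; it is nonzero, and the set of $t\in\Z/a\Z$ with $tg = \bar c$ is either empty or a coset of the subgroup $\{t: tg=0\}$. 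That subgroup is proper and contains no $t\equiv 1 \pmod p$ multiples beyond those in $p\Z/a\Z$, so it is contained in $p\Z/a\Z$. Hence the allowed values of $\tilde X_{i_0}$ lie in a single residue class mod $p$. Since $i_0\notin\tau$, $\tilde X_{i_0}$ is $\varepsilon$-balanced, so this happens with probability at most $1-\varepsilon$, independently of the other coordinates. Multiplying the two factors gives the lemma.
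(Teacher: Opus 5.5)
Your proposal is correct and follows the paper's proof essentially step for step. It conditions on the coordinates in $\sigma\cup\tau$, shows that the restriction of $FA^{-1}$ to $V_{\setminus\sigma\cup\tau}$ is a code onto $H$ by the same maximality-of-depth contradiction (index $\geq 2D \geq D+1$ against $|\sigma\cup\sigma'|<(D+1)\log(n)^\lambda$), applies Lemma~\ref{lemmacodes} with $H$ in place of $G$, and in the proper case uses a coordinate $i_0\in\sigma\setminus\tau$ with $FA^{-1}(e_{i_0})\notin H$.

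Your annihilator argument fills in a step the paper leaves implicit, namely why the constraint confines $\tilde X_{i_0}$ to a single residue class mod $p$. The clean reason is that the annihilator of the nonzero $g\in G/H$ contains no unit of $\Z/a\Z$, so it lies in $p\Z/a\Z$. Your clause about ``$t\equiv 1 \pmod p$ multiples'' is garbled but not needed.
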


\begin{proof}
    Let $F$ have robust image $H$ for $X$, so $D = [G:H]$. Pick $\sigma \subset [n]$ with $|\sigma| < D\log(n)^{\lambda}$ such that $H = FA^{-1}(V_{\setminus\sigma \cup \tau})$. Let $v_i \coloneq A^{-1}(e_i)$.
Because $X = A^{-1} \tilde X$, \[FX = FA^{-1}\tilde X = \sum_{i=1}^n FA^{-1}(e_i)\tilde{X}_i =  \sum_{i \in \sigma \cup \tau} F(v_i)\tilde{X}_i + \sum_{i \not \in \sigma \cup \tau} F(v_i)\tilde X_i.\] By conditioning on the $\tilde X_i$ with $i \in \sigma \cup \tau$, \begin{align*}
        \P(FX=0) &=\P\left(\sum_{i \in \sigma \cup \tau} F(v_i)\tilde X_i \in H\right)\P\left(\sum_{i \not \in \sigma \cup \tau} F(v_i)\tilde X_i = -\sum_{i \in \sigma \cup \tau} F(v_i)\tilde X_i: \sum_{i \in \sigma \cup \tau} F(v_i)\tilde X_i \in H\right) \\
        &\leq  \P\left(\sum_{i \in \sigma \cup \tau} F(v_i)\tilde X_i \in H\right)\max_{g \in H}\left(\P\left(\sum_{i \not \in \sigma \cup \tau} F(v_i)\tilde X_i = g\right)\right).
    \end{align*}
    Let $\tilde X_{\setminus \sigma \cup \tau}$ denote the random vector in $V_{\setminus \sigma \cup \tau}$ produced by taking $\tilde X$ and removing the entries $\tilde X_i$ for $i \in \sigma \cup \tau$. Then $\tilde X_{\setminus \sigma \cup \tau}$ is $(\varepsilon,\emptyset,I_{n-|\sigma \cup \tau|})$-balanced. Assume for contradiction that $FA^{-1}|_{V_{\setminus \sigma \cup \tau}} \in \Hom(V_{\setminus \sigma \cup \tau},H)$ is not a code for $\tilde X_{\setminus \sigma \cup \tau}$. Then, we can remove a set of indices $\rho$ with $|\rho|<\log(n)^{\lambda}$ from $V_{\setminus \sigma \cup \tau}$ to get $FA^{-1}(V_{\setminus \rho \cup \sigma \cup \tau})<H$. Taking $\sigma' = \rho \cup \sigma$, then $|\sigma'| < (D + 1)\log(n)^{\lambda}$ and $[G:FA^{-1}(V_{\setminus \sigma' \cup \tau})]$ is a proper multiple of $D$. Then, $F$ has depth at least $[G:FA^{-1}(V_{\setminus \sigma' \cup \tau})] >D$ for $X$, which is a contradiction. 
    
    Hence, the restriction of $FA^{-1}$ to $V_{\setminus \sigma \cup \tau}$ is a code for $\tilde X_{\setminus \sigma \cup \tau}$. Applying Lemma \ref{lemmacodes} yields \[\P\left(\sum_{i \not \in \sigma \cup \tau} F(v_i)\tilde X_i = g\right) =\P\left(FA^{-1}(\tilde X_{\setminus \sigma \cup \tau})= g\right) \leq |H|^{-1} + \exp(-\varepsilon\log(n)^{\lambda}/a^2).\] 

    Now, let $F$ have proper robust image for $X$. Because $H=FA^{-1}V_{
    \setminus\sigma \cup \tau}$ and $H< FA^{-1}V_{\setminus\tau}$, there exists $k \in \sigma \setminus \tau$ such that $FA^{-1}(e_k) \not \in H$. As $k \not \in \tau$, then $\tilde X_k$ is $\varepsilon$-balanced. By conditioning on the $\tilde X_i$ for $i \in \sigma \cup \tau \setminus \{k\}$, then \[\P\left(\sum_{i \in \sigma \cup \tau} F(v_i)\tilde X_i \in H\right) = \P\left(F A^{-1}(e_k) \tilde X_k  \equiv -\sum_{i \in \sigma \cup \tau \setminus \{k\}} F(v_i)\tilde X_i \pmod{H}\right) \leq 1-\varepsilon.\]
    Multiplying the bounds from our previous two equations yields the lemma. 
\end{proof}

\section{Stratification of the surjections}\label{sectoverview}

In this section, we will prove Theorems \ref{theorembandmatrix} and \ref{thmdependent} assuming Proposition \ref{mainprop}. We will define our stratification of $F \in \Sur(V,G)$ which will allow us to state Proposition \ref{mainprop}. Assuming Proposition \ref{mainprop}, we will then prove Theorem \ref{thmmoments} and Theorem \ref{thmmomentsband}. By the work in Section \ref{sectmoments}, these yield Theorems \ref{thmdependent} and \ref{theorembandmatrix}, respectively. 

We begin with an essential definition. We will use all parts of Definition \ref{defstratify} repeatedly throughout the rest of the paper.

\begin{definition}[Stratification of $\Sur(V,G)$]\label{defstratify}
    Let $M$ be a random matrix valued in $\M_{n \times (n+u)}(\Z/a\Z)$. Let the subgroups of $G$ be $H_1,\ldots,H_s$, ordered so that $|H_k| \leq |H_{k+1}|$, which implies that $H_s = G$. Choose $n_1,\ldots,n_s$ to be nonnegative integers such that $\sum_{k=1}^s n_k = n+u$ and choose an integer $0 \leq r\leq n+u$. Define \begin{equation*}
        \Sur_{\{n_k\},r}(V,G) = \left\{F \in \Sur(V,G) : \begin{aligned}&\text{ For all $k \in [s]$, $F$ has robust image $H_k$ for exactly $n_k$ columns of $M$} \\ &\text{ and } F\text{ has proper robust image for }\text{exactly $r$ columns of $M$}\end{aligned}\right\}.\end{equation*}
\end{definition}

This stratification and the following proposition are motivated by the values of $n_1,\ldots,n_s$ and $r$ naturally showing up in our bound on $\P(FM = 0)$ in Lemma \ref{lemmaprobsimple}, which is proved using Lemma \ref{lemmacodes} and Lemma \ref{lemmadepthprobabilitycolumns}. We now state our main proposition, which says that the main contribution to the right side of (\ref{eq1}) comes from $F$ which are codes for $M$. 

\begin{proposition}\label{mainprop}
    Assume that either \begin{enumerate}[label = (\alph*)]
        \item $M$ is defined as in Theorem \ref{thmmoments}, with $\lambda = 2$, or
        \item $M$ is defined as in Theorem \ref{thmmomentsband}, with $\lambda = 1 + \delta/2$.
    \end{enumerate} Let $n_s < n+u$. Then, there exist constants $K,c >0$ which are independent of $n$ and $\{n_k\}$ and $r$ such that \begin{equation}\label{eqSur}\#\Sur_{\{n_k\},r}(V,G) \max_{F \in \Sur_{\{n_k\},r}(V,G)} \Big(\P(FM = 0)\Big) \leq Ke^{-c\log(n)^{\lambda}}.\end{equation}
\end{proposition}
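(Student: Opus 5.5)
The plan is to bound the two factors in \eqref{eqSur} separately and multiply. First I will bound the probability. For $F \in \Sur_{\{n_k\},r}(V,G)$, the $n_s$ columns with robust image $G$ have depth $1$, so $F$ is a code for them by Remark~\ref{rmkdepth1code}. Lemma~\ref{lemmacodes} then bounds each such factor by $|G|^{-1}+e^{-\varepsilon\log(n)^\lambda/a^2}$. For a column with robust image $H_k$, $k<s$, Lemma~\ref{lemmadepthprobabilitycolumns} gives $[G:H_k]|G|^{-1}+e^{-\varepsilon\log(n)^\lambda/a^2}=|H_k|^{-1}+e^{-\varepsilon\log(n)^\lambda/a^2}$, with an extra factor $(1-\varepsilon)$ for each of the $r$ proper columns. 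Multiplying (this should be the paper's Lemma~\ref{lemmaprobsimple}), I expect
\[
\P(FM=0)\le (1-\varepsilon)^r\prod_{k=1}^s |H_k|^{-n_k}\,\bigl(1+|G|e^{-\varepsilon\log(n)^\lambda/a^2}\bigr)^{n+u},
\]
and the last factor is $1+o(1)$ because $\lambda>1$.

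The heart of the proof is then a counting lemma. I want to show
\[
\#\Sur_{\{n_k\},r}(V,G)\le K'\,|G|^{n}\prod_{k<s}\Bigl(\frac{|H_k|}{|G|}\Bigr)^{n_k-\text{(small)}}(1-\varepsilon)^{-r/2}\cdot e^{-c\log(n)^\lambda},
\]
or some comparable bound that beats the probability bound by the required margin. The mechanism is this. If $F$ has robust image $H$ for column $j$, then $FA_j^{-1}(V_{\setminus\sigma\cup\tau_j})=H$ with $|\sigma|<D\log(n)^\lambda$. So $F$ maps all of $W_j$ into $H$, except on a span of at most $|G|\log(n)^\lambda$ vectors $v_i^j$.

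In case (a) I will pick a bounded number of columns with a non-$G$ robust image. Using $\alpha+\beta<1$ and the hypothesis on the $u_i$, each basis vector $u_i$ lies in $W_j$ for many of these columns. Apart from $O(\log(n)^\lambda)$ exceptional coordinates, $F(u_i)$ is then forced into the intersection of the corresponding $H$'s. Choosing $F(u_i)$ for all $i$ therefore costs roughly $|H|^n\cdot|G|^{O(\log(n)^\lambda)}\binom{n}{O(\log(n)^\lambda)}$ instead of $|G|^n$. Summing against $\prod|H_k|^{-n_k}$ with $\sum n_k=n+u$, the total is about $|G|^{-u}(|H|/|G|)^{\Theta(n)}$ times subexponential factors, which is far smaller than $e^{-c\log(n)^2}$. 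When $n_s$ is close to $n+u$, the count of non-codes must instead be compared directly, as in \cite{Wood15}: the number of $F$ that fail to be codes for a given column is about $\binom{n}{\log(n)^\lambda}|H|^n|G|^{\log(n)^\lambda}$. A small number $n+u-n_s$ of bad columns then gains $|G|^{-\Theta(n)}$ against a loss of only $|H|^{-(n+u-n_s)}$, and a union bound over the bad column sets, at cost $\binom{n+u}{n+u-n_s}$, should be dominated.

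In case (b) each $W_j$ can be very small, since the band has width only $\log(n)^{1+\delta}$, so no single column constrains many coordinates. Here I will use all $n$ columns together. A column $j$ with robust image $H$ constrains $F(e_i)$ for $i$ in the band around $j$, minus at most $|G|\log(n)^\lambda$ indices. Because $\lambda=1+\delta/2<1+\delta$, the removed set is a vanishing fraction of the band. The set of columns sharing a robust image determines a union of intervals on the path $P_n$, and I need a covering lemma (Lemma~\ref{lemma7}) to show that the constrained coordinates cover nearly $n_k$ of them. Transitions between regions with different robust images are the delicate part: that is where $F(V_{\setminus\tau_j})\ne H$, making those columns improper, and the factor $(1-\varepsilon)^r$ pays for the boundary freedom.

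I expect this last step to be the main obstacle: controlling the entropy at the interval boundaries in case (b) with only the $(1-\varepsilon)^r$ and $\log(n)^\lambda$ savings available. Once both the probability bound and the count are in hand, multiplying them and using $\lambda>1$ to absorb the $\binom{n}{\log(n)^\lambda}$-type factors, which are $e^{O(\log(n)^{\lambda}\log n)}$, will require these factors to be charged per bad column, gaining a full $|G|^{-\Theta(\text{band})}$ each time. That yields $Ke^{-c\log(n)^\lambda}$ as claimed.
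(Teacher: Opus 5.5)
Your skeleton (stratify, bound $\P(FM=0)$ as in \eqref{eqprobsimpleleft}, then count) matches the paper, but the counting step has a genuine gap in both cases.

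\textbf{Case (a).} You measure the saving in the count against $|G|^n$. Your own probability bound, however, is $|G|^{-(n+u)}\prod_k[G:H_k]^{n_k}(1-\varepsilon)^r$. The factor $\prod_k[G:H_k]^{n_k}$ cancels a saving of $(|H|/|G|)^{\Theta(n)}$ as soon as a positive proportion of columns have nontrivial robust image.

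Concretely, take $A_j=I_n$, $u_i=e_i$, all entries $\varepsilon$-balanced, $G=\Z/p\Z$, and $F\neq 0$ vanishing on all $e_i$ outside a set of fewer than $p\log(n)^2$ indices. Every column then has robust image $0$, your count is $e^{O(\log(n)^3)}$, and $\prod_k|H_k|^{-n_k}=1$. So nothing small comes out unless you use $(1-\varepsilon)^r$, which your case (a) argument never does.

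The paper gets the saving from the parameters $D_1\ge D_2$ and $\theta\in(\beta,1-\alpha)$ of Definition \ref{defkey}. The probability carries the factor $D_1/|G|$ on only $\theta n$ columns. The count carries $|G|/D_1$ on $\dim W_{j_0}\ge(1-\alpha)n$ coordinates. This is an exponential gain when $D_1>D_2$, precisely because $\theta+\alpha<1$. When $D_1=D_2=D$, three further arguments are needed:
\begin{itemize}
\item If few columns have depth $D$, the probability bound improves.
\item If a single robust image $H_k$ of index $D$ occurs for at least $\theta n$ columns, surjectivity gives some $u_i$ with $F(u_i)\notin H_k$. Since $u_i\in W_j$ for all but $\beta n$ columns, this forces $r\ge(\theta-\beta)n$, so $(1-\varepsilon)^r$ pays.
\item Otherwise two distinct index-$D$ robust images $H_0\neq H$ occur for columns $j_0,j$. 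Then $F$ essentially maps $W_{j_0}\cap W_j$, of dimension linear in $n$, into $H_0\cap H$.
\end{itemize}

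Your covering claim is also unsupported. The hypothesis only says each $u_i$ lies outside at most $\beta n$ of the $n+u$ loci. So a bounded set of columns selected by robust image need not contain any $W_j\ni u_i$. The greedy selection in Lemma \ref{mainlemma} shrinks the uncovered set only by a factor $\beta/\theta$ per step. The resulting $e^{\zeta n}$ loss is affordable only because of the exponential gains just described. (For general $A_j$, the ``exceptional coordinates'' exist only in the dimension-counting sense of Lemma \ref{lemmarobustimage}, not coordinatewise in the $u_i$.)

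\textbf{Case (b).} You locate the difficulty, but the proposed resolution does not work. There is no per-column saving in the count to charge against. Relative to $\prod_k|H_k|^{n_k}$ the count does not improve in general; for an $F$ supported on few coordinates it is larger. So all savings must come from $(1-\varepsilon)^r$ or, for small $r$, from counting non-codes.

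Each application of Lemma \ref{lemmarobustimage} to a band column costs $\exp(\log(n)^{\lambda+o(1)})$, which a single factor $(1-\varepsilon)$ cannot pay. The $e^{O(\log(n)^{\lambda}\log n)}$ factors you mention are not absorbed by $\lambda>1$. For $\delta<2$ they are not even beaten by a band-width saving $e^{-cw_n}$, so you must use that the relevant binomials are over a band of size $O(w_n)$. The missing ingredients are:
\begin{enumerate}[label=(\roman*)]
\item A column with nonproper robust image $H_k$ satisfies $F(V_{\setminus\tau_j})\subseteq H_k$ exactly, with no exceptional set. These columns therefore cost only the specification of the regions $T_k$. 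Lemma \ref{lemmaregions} shows there are $n^{O(r/\log(n)^\lambda)}$ choices, because each boundary of $T_k$ is followed by about $\log(n)^\lambda$ columns with proper robust image. (These boundary columns are the proper ones, not ``improper'' as you wrote.)
\item Lemma \ref{lemma7} is what lets you apply Lemma \ref{lemmarobustimage} to only $O(r/w_n)$ proper columns. This yields the count $\prod_k|H_k|^{n_k}\exp(Cr/\log(n)^{\delta/4})$ of Lemma \ref{lemmabandcount}, which $(1-\varepsilon)^r$ beats once $r\ge\frac18\log(n)^\lambda$.
\item For $r<\frac18\log(n)^\lambda$ the factor $(1-\varepsilon)^r$ is useless, and a separate argument is required. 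Then $T_k=\emptyset$ for all $k<s$, so every column with robust image other than $G$ is proper. Hence $n_s\ge n-r$ and $\P(FM=0)\le K|G|^{r-n}$, while Lemma \ref{lemmanoncodes} bounds the count by $K|G|^ne^{-c\log(n)^{1+\delta}}$.
\end{enumerate}
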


For Proposition \ref{mainprop}(a), we could take any $\lambda > 1$, so we choose $\lambda = 2$ for concreteness. For Proposition~\ref{mainprop}(b), we need $1 < \lambda < 1 +\delta$ so we choose $\lambda = 1 + \delta/2$. 

Note that $n_s = n+u$ for $F$ is equivalent to $F$ having depth 1 for all columns of $M$, which implies that $F$ is a code for $M$ by Remark \ref{rmkdepth1code}. So, all $F \in \Sur(V,G)$ which are not codes for $M$ have $n_s <n+u$. Proposition~\ref{mainprop} says that the only nonvanishing contribution to the moments in (\ref{eq1}) comes from codes. We will prove Proposition \ref{mainprop}(a) in Section \ref{sectmainprop} and Proposition \ref{mainprop}(b) in Section \ref{sectmainpropband}. In each case, bounding $\P(FM = 0)$ follows from multiplying out the estimates in Section \ref{sectcolumns} but bounding $\#\Sur_{\{n_k\},r}(V,G)$ is quite subtle.

\begin{remark}
    For Proposition \ref{mainprop}(a), it would suffice to sort $F$ only by the number of columns for which it has each robust image, i.e. the data of $r$ is unnecessary. For Proposition \ref{mainprop}(a), we will also get a stronger bound of $Ke^{-cn}$ on the right side of (\ref{eqSur}).
\end{remark}

We now bound $\P(FM=0)$ when $F$ is a code for $M$. Our proof is identical to the proof of \cite[Lemma 2.4]{Wood15} if replace $\log(n)^\lambda$ by $\delta n$ for some $\delta >0$.

\begin{lemma}\label{lemmacodesmatrix}
Let $M$ be a random matrix valued in $\M_{n \times (n+u)}(\Z/a\Z)$. Fix $\lambda >1$ and let $F \in \Hom(V,G)$ be a code for $M$. Then, there exist $K,c > 0$ which are independent of $n$ and $F$ such that
    \[\verts{\P(FM=0) -|G|^{-n-u}} \leq \frac{Ke^{-c\log(n)^\lambda}}{|G|^{n+u}}.\]
\end{lemma}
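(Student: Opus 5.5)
Since the columns $M_1,\ldots,M_{n+u}$ are independent, I would start from the factorization
\[\P(FM=0)=\prod_{j=1}^{n+u}\P(F(M_j)=0).\]
Because $F$ is a code for every column, Lemma~\ref{lemmacodes} (with $g=0$) applies to each factor:
\[\P(F(M_j)=0)=|G|^{-1}(1+\theta_j), \qquad |\theta_j|\le |G|\exp(-\varepsilon\log(n)^\lambda/a^2)=:\eta.\]
This gives
\[\P(FM=0)=|G|^{-n-u}\prod_{j=1}^{n+u}(1+\theta_j),\]
and the statement reduces to showing $\left|\prod_j(1+\theta_j)-1\right|\le Ke^{-c\log(n)^\lambda}$ with $K,c$ independent of $n$ and $F$.

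For that, I would use the elementary bound
\[\Bigl|\prod_{j=1}^{N}(1+\theta_j)-1\Bigr|\le (1+\eta)^{N}-1\le e^{N\eta}-1,\]
which is proved by expanding the product or by induction on $N$. Take $N=n+u$. Then
\[N\eta=(n+u)|G|\exp(-\varepsilon\log(n)^\lambda/a^2).\]
Since $\lambda>1$, the term $\log(n)^\lambda$ eventually dominates any multiple of $\log n$. So for $n\ge n_0$ we have $\log(n+u)\le \tfrac{\varepsilon}{2a^2}\log(n)^\lambda$, and hence $N\eta\le |G|e^{-c\log(n)^\lambda}$ with $c=\varepsilon/(2a^2)$. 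In particular $N\eta\le 1$ for large $n$. Applying $e^x-1\le (e-1)x$ on $[0,1]$ then gives the bound with $K=(e-1)|G|$.

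The finitely many $n<n_0$ are handled by enlarging $K$. For these $n$, both $\P(FM=0)$ and $|G|^{-n-u}$ lie in $[0,1]$, so the difference is at most $1 \le |G|^{n_0+u}e^{c\log(n_0)^\lambda}\cdot |G|^{-n-u}e^{-c\log(n)^\lambda}$. This quantity depends only on $n_0,u,G,c$ and not on $F$.

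I do not expect a real obstacle here. The one point needing care is that the per-column error $\exp(-\varepsilon\log(n)^\lambda/a^2)$ must beat the factor $n+u$ coming from the number of columns. This is exactly where $\lambda>1$ is used, and it is why codes of distance $\log(n)^\lambda$ rather than $\log n$ are needed. The uniformity in $F$ holds because Lemma~\ref{lemmacodes} is uniform over all codes.
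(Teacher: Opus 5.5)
Your proof is correct and follows the paper's argument: you factor over the independent columns, apply Lemma~\ref{lemmacodes} to each factor, and use $\lambda>1$ so that the accumulated error $(n+u)\exp(-\varepsilon\log(n)^\lambda/a^2)$ still decays like $e^{-c\log(n)^\lambda}$. The only difference is the elementary inequality: you use $\bigl|\prod_j(1+\theta_j)-1\bigr|\le e^{N\eta}-1$ where the paper uses Lemma~\ref{lemmasimple}, and your version has the small advantage of handling the unequal column factors directly rather than by comparison with $(|G|^{-1}\pm y)^{n+u}$.
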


To prove this, we use a simple inequality.
\begin{lemma}\cite[Lemma 2.3]{Wood15}\label{lemmasimple}
    Let $m \geq 2$ be an integer and $x \geq 0$ and $y$ be real numbers satisfying $|y|/x \leq 2^{1/(m-1)}-1$ and $x+y \geq 0$. Then, $|(x+y)^m-x^m| \leq 2mx^{m-1}|y|$.
\end{lemma}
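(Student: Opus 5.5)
The statement to prove is Lemma~\ref{lemmasimple}: for $m \ge 2$, $x \ge 0$, $x+y\ge 0$ and $|y|/x \le 2^{1/(m-1)}-1$, we have $|(x+y)^m - x^m| \le 2m x^{m-1}|y|$. The plan is to use the mean value theorem for $t\mapsto t^m$ on the interval between $x$ and $x+y$, and then bound the derivative at the intermediate point using the hypothesis on $|y|/x$.

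First dispose of the degenerate case. If $x=0$, the hypothesis $|y|/x \le 2^{1/(m-1)}-1$ only makes sense if $y=0$ (reading $0/0$ as $0$), and then both sides vanish. So assume $x>0$.

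For $x>0$, the function $f(t)=t^m$ is differentiable on $[0,\infty)$. Both $x$ and $x+y$ lie in $[0,\infty)$, so the mean value theorem gives some $\xi$ between $x$ and $x+y$ with
\[(x+y)^m - x^m = m\xi^{m-1} y.\]
Since $\xi \ge 0$ and $\xi \le \max(x,x+y) \le x+|y|$, we get
\[|(x+y)^m-x^m| \le m (x+|y|)^{m-1}|y| = m x^{m-1}\left(1+\tfrac{|y|}{x}\right)^{m-1}|y|.\]

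The hypothesis $|y|/x \le 2^{1/(m-1)}-1$ is equivalent to $(1+|y|/x)^{m-1}\le 2$. Substituting this gives the bound $2m x^{m-1}|y|$.

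The only delicate point is the $x=0$ convention. The real content is the observation that the hypothesis was chosen exactly so that $(1+|y|/x)^{m-1}\le 2$. The condition $x+y\ge 0$ is used only to keep $\xi\ge 0$, which matters when $m-1$ is even or when non-integer issues arise; it does not matter here since $m$ is an integer, but it keeps $|\xi|\le x+|y|$ transparent.
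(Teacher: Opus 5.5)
Your proof is correct. The mean value theorem gives $|(x+y)^m-x^m|\le m(x+|y|)^{m-1}|y|$, and the hypothesis is exactly $(1+|y|/x)^{m-1}\le 2$. The paper does not reprove this lemma (it quotes it from Wood), and this is the standard one-line argument the hypothesis is tailored to.

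Your closing remark about the role of $x+y\ge 0$ is unnecessary. The bound $|\xi|\le x+|y|$ holds for any $\xi$ between $x$ and $x+y$ regardless of sign. Moreover, $x+y\ge 0$ already follows from $|y|\le (2^{1/(m-1)}-1)x\le x$.
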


\begin{proof}[Proof of Lemma \ref{lemmacodesmatrix}]
Using Lemma \ref{lemmacodes} and that $\lambda >1$, for all $j \in [n+u]$ and $n$ sufficiently large, we have \[|\P(FM_j = 0)-|G|^{-1}| \leq 
 \exp(-\varepsilon\log(n)^{\lambda}/a^2) \leq 
 \frac{\log 2}{|G|(n+u-1)} \leq
|G|^{-1} \left(2^{\frac{1}{n+u-1}}-1\right).\] For $n$ sufficiently large, we can thus apply Lemma \ref{lemmasimple} with $m = n+u$, $x = |G|^{-1},$ and $y = \P(FM_j = 0)-|G|^{-1}$ to get \begin{equation}\label{eq2}
    \left| \P(FM_j = 0)^{n+u}-|G|^{-n-u} \right|\leq 2(n+u)|G|^{-n-u+1}\left| \P(FM_j = 0)-|G|^{-1} \right|.
\end{equation}
Recall that $\P(FM=0) = \prod_{j=1}^{n+u} \P(FM_j = 0)$ by the independence of the columns of $M$ as in (\ref{eq1}). As (\ref{eq2}) holds for all $j \in [n+u]$, 
 \[\verts{\P(FM=0) -|G|^{-n-u}} \leq 2(n+u)\frac{\exp(-\varepsilon\log(n)^{\lambda}/a^2)}{|G|^{n+u-1}}.\] Taking $c < \varepsilon/a^2$ and $K=1$ yields the lemma for $n$ sufficiently large, and we can handle the smaller values of $n$ by choosing $K$ to be sufficiently large.
\end{proof} 

We also bound the number of $F$ which have depth greater than 1 for a column of $M$, using similar methods to those in the proof of \cite[Theorem 2.9]{Wood15}. By Remark \ref{rmkdepth1code}, all $F \in \Hom(V,G)$ which are not codes for $M$ will have depth greater than 1 for a column of $M$.

\begin{lemma}\label{lemmanoncodes}
    Let $M$ be as in Theorem \ref{thmmoments} with $n_0 = (1-\alpha)n$ or as in Theorem \ref{thmmomentsband} with $n_0 = \log(n)^{1+\delta}$. Then there exist constants $K,c > 0$ such that \[\#\{F \in \Hom(V,G): F \text{ has depth $>1$ for a column of $M$}\} \leq K|G|^{n} e^{-cn_0}.\]
\end{lemma}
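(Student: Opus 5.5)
A union bound over columns reduces the statement to a single column. Let $X=M_j$ be $(\varepsilon,\tau,A)$-balanced. If $F$ has depth $D>1$ for $X$, then there is some $\sigma$ with $|\sigma|<D\log(n)^\lambda\le |G|\log(n)^\lambda$ such that $H:=FA^{-1}(V_{\setminus\sigma\cup\tau})$ is a proper subgroup of $G$. Since $A$ is a bijection on $\Hom(V,G)$ via $F\mapsto FA^{-1}$, it suffices to count $F'=FA^{-1}\in\Hom(V,G)$ with $F'(V_{\setminus\sigma\cup\tau})\le H$ for some proper $H<G$ and some $\sigma$ of the stated size. For fixed $H$ and $\sigma$, the map $F'$ is free on the $e_i$ with $i\in\sigma\cup\tau$ and takes values in $H$ on the remaining $e_i$. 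The number of such $F'$ is therefore at most
\[
|G|^{|\sigma\cup\tau|}\,|H|^{\,n-|\sigma\cup\tau|}\le |G|^n\,p^{-(n-|\tau|-|\sigma|)},
\]
using $|H|\le |G|/p$. Summing over the at most $2^{|G|}$ subgroups $H$ and the at most $\binom{n}{<|G|\log(n)^\lambda}\le n^{|G|\log(n)^\lambda}$ choices of $\sigma$, one column contributes at most
\[
2^{|G|}|G|^n \exp\!\big(|G|\log(n)^{\lambda+1}-(n-|\tau|-|G|\log(n)^\lambda)\log p\big).
\]

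Case of Theorem~\ref{thmmoments}. Here $|\tau|\le\alpha n$, so $n-|\tau|\ge(1-\alpha)n=n_0$. Multiplying the single-column bound by $n+u$ columns and absorbing the polylogarithmic losses $\exp(O(\log(n)^{3}))$ and the factor $n+u$ into $e^{-cn_0}$, which is possible for $c<\log p$ once $n$ is large, gives the claim. Small $n$ are handled by enlarging $K$.

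Case of Theorem~\ref{thmmomentsband}. Here $A=I_n$, and $\tau$ is the complement of the band in column $j$, so $n-|\tau|\ge\log(n)^{1+\delta}=n_0$; near the edges of the matrix the band still has at least about $\log(n)^{1+\delta}$ entries. The crude count above now fails: the $\sigma$-sum costs $\exp(|G|\log(n)^{\lambda+1})$, and since $\lambda>1$ we have $\lambda+1>1+\delta$, so this can exceed the gain $p^{-n_0}$. I expect this to be the main obstacle, and the fix is to refine the $\sigma$-count. Only $\sigma\cap\tau^c$ matters, because indices of $\sigma$ lying in $\tau$ are already free. So $\sigma$ may be taken inside the band of column $j$, which has size $m\le 2\log(n)^{1+\delta}+1$, and the number of choices is $\binom{m}{<|G|\log(n)^\lambda}\le \exp\big(|G|\log(n)^{\lambda}\log(m)\big)=\exp\big(O(\log(n)^{\lambda}\log\log n)\big)$. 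The gain is $p^{-(m-|\sigma|)}\le p^{-m/2}$, since $|\sigma|=O(\log(n)^{\lambda})=o(m)$ when $\lambda=1+\delta/2<1+\delta$. Hence each column contributes
\[
|G|^n\exp\!\big(-\tfrac{\log p}{2}\log(n)^{1+\delta}+O(\log(n)^{1+\delta/2}\log\log n)\big).
\]
The union over the $n$ columns adds only a factor $e^{\log n}$, which is negligible, so the total is at most $K|G|^ne^{-c\log(n)^{1+\delta}}$. This is exactly the point where $\lambda<1+\delta$ is needed. Applying the same restriction of $\sigma$ to $\tau^c$ in the first case simply makes the bound there cleaner.
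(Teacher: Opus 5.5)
Your proposal is correct and is essentially the paper's argument: a union bound over the column $j$ and the subgroup $H$, restricting $\sigma$ to the $\varepsilon$-balanced indices $[n]\setminus\tau_j$ so that there are only about $\ell^{|G|\log(n)^\lambda}$ choices of $\sigma$ (with $\ell=n-|\tau_j|\ge n_0$), and then counting $F$ by its values on the basis $A_j^{-1}e_i$ (free on $\sigma\cup\tau_j$, in $H$ elsewhere), which works because $\log(n)^\lambda\log\ell=o(\ell)$ when $\lambda<1+\delta$. The paper imposes $\sigma\subseteq[n]\setminus\tau_j$ from the start in both cases and uses $|H|\le|G|/2$ instead of $|G|/p$; your motivating aside that $\lambda+1>1+\delta$ holds for $\lambda=1+\delta/2$ only when $\delta<2$, but this does not affect the refined argument.
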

\begin{proof}
    Let $F$ have depth $D>1$ for some column $M_j$. Let $F$ have robust image $H < G$ for $M_j$. Let $M_j$ be $(\varepsilon,\tau_j,A_j)$-balanced with $\ell \coloneq |[n] \setminus \tau_j|$. We can assume $\ell \geq (1-\alpha)n$ in the proof of Theorem \ref{thmmoments} by the assumptions of that theorem. We can assume $A_j = I_n$ and $\ell \geq \log(n)^{1+\delta}$ in the proof of Theorem \ref{thmmomentsband} because the assumptions of that theorem allow us to take all elements $i \in \tau_j$ to have $|i-j| > \log(n)^{1+\delta}$. So, we have $\ell \geq n_0$ in both cases. By Definition \ref{defrobustimage}, there exists $\sigma \subset [n]$ with $|\sigma| < D\log(n)^{\lambda} \leq |G| \log(n)^{\lambda}$ such that $FA_j^{-1}(V_{\setminus \sigma \cup \tau_j}) = H$. 

As $M$ has $n+u$ columns and $G$ has $s$ many subgroups, there are at most $(n+u)s$ choices for $j$ and $H$. Then, there are at most $\binom{\ell}{\floor{|G| \log(n)^{\lambda}}}$ choices of $\sigma$, if we assume $\sigma \subseteq [n] \setminus \tau_j$ and we enlarge $\sigma$ to have size $\floor{|G| \log(n)^{\lambda}}$ without loss of generality. Let $v_i \coloneq A_j^{-1}(e_i)$ so that $F(v_i) = FA_j^{-1}(e_i)$. Because $A_j$ is invertible, $\{v_1,\ldots,v_n\}$ is a basis for $V$ and $F$ is determined uniquely by $F(v_1),\ldots,F(v_n)$. There are $|G|$ choices for $F(v_i)$ for $i \in \sigma \cup \tau_j$, and $|H|$ choices for $F(v_i)$ for $i \not \in \sigma \cup \tau_j$. By the definitions of $\sigma$ and $\ell$, $|[n] \setminus (\sigma \cup \tau_j)|\geq \ell - |G| \log(n)^{\lambda}$. Clearly, $|H| \leq |G|/2$. Combining these estimates and letting $N \coloneq \#\{F \in \Hom(V,G): F \text{ has depth $>1$ for a column of $M$}\}$,
\begin{align}
    N &\leq (n+u)s \binom{ \ell}{\floor{|G| \log(n)^{\lambda}}} |G|^{|\sigma \cup \tau_j|} |H|^{|[n] \setminus (\sigma \cup \tau_j)|} \nonumber\\
     &\leq (n+u)s   \ell^{|G| \log(n)^{\lambda}} |G|^n \left(\frac{1}{2}\right)^{\ell - |G| \log(n)^{\lambda}}.\nonumber
\end{align}
Take $x_n \coloneq \frac{\ell}{\log(n)^{\lambda}}$. As $\ell \geq n_0$, we have $\log(x_n) \in o(x_n)$ and $\log(\log(n)) \in o(x_n)$. Taking $n$ sufficiently large and using these inequalities yields that, for some constant $c>0$,
\begin{align}N &\leq |G|^n(n+u)s  \exp\Bigg(\log(n)^{\lambda}\log(\ell)|G|-(\log 2)\big(\ell -  |G| \log(n)^{\lambda}\big)\Bigg)\nonumber \\
&= |G|^n(n+u)s  \exp\Bigg(\log(n)^{\lambda}\Big(|G|\big(\log(\log(n)^{\lambda}) + \log(x_n) +\log 2\big)-\log(2)x_n\Big)\Bigg)\nonumber \\
&\leq |G|^n\exp(-c\log(n)^{\lambda}x_n).\nonumber\end{align} Using that ${\log(n)^{\lambda}x_n = \ell \geq n_0}$ and taking $K$ to be large enough to account for all small values of $n$ yields the lemma. 
\end{proof}

Now, assuming Proposition \ref{mainprop}, we can prove Theorem~\ref{thmmoments} and Theorem~\ref{thmmomentsband}. These imply Theorem \ref{thmdependent} and Theorem \ref{theorembandmatrix} by the work in Section \ref{sectmoments}. The structure of this proof follows the proof of \cite[Theorem 2.9]{Wood15}, and the work has already been done for us by Proposition \ref{mainprop}, Lemma \ref{lemmacodesmatrix}, and Lemma \ref{lemmanoncodes}.

\begin{proof}[Proof of Theorem~\ref{thmmoments} and Theorem~\ref{thmmomentsband} assuming Proposition \ref{mainprop}]
    Let $M$ be as in Theorem~\ref{thmmoments} or Theorem~\ref{thmmomentsband}. Using (\ref{eq1}), we can expand
    \begin{align}
    \big|\E(\#\Sur(\cok(M), G)) -|G|^{-u}\big| &= \verts{\left(\sum_{F \in \Sur(V,G)} \P(FM = 0)\right)-|G|^{-u}}\nonumber \\
         &= \verts{\left(\sum_{F \in \Sur(V,G)} \P(FM = 0)\right)-\left(\sum_{F \in \Hom(V,G)} |G|^{-n-u}\right)}\nonumber \\
        & \leq \sum_{\substack{F \in \Sur(V,G) \\ F \text{ code for $M$}}} \verts{\P(FM = 0)-|G|^{-n-u}}\label{line3}  \\
        &\;\;\;\;\;\;+ \sum_{\substack{F \in \Sur(V,G) \\ F \text{ not code for $M$}}} \P(FM = 0)\label{line2} \\
        &\;\;\;\;\;\;+ \sum_{\substack{F \in \Hom(V,G) \\ F \text{ not code for $M$}}} |G|^{-n-u}.\label{line1}
    \end{align}
So, it suffices to show that each of \eqref{line3} and \eqref{line2} and \eqref{line1} is bounded above by $Ke^{-c\log(n)^\lambda}$ for some positive constants $K$ and $c$. Here, $\lambda = 2$ in the proof of Theorem \ref{thmmoments}, and $\lambda = 1 + \delta/2$ in the proof of Theorem \ref{thmmomentsband}.

For \eqref{line3}, this is true by Lemma \ref{lemmacodesmatrix}, since $\#\Sur(V,G) \leq |G|^n$. 

For \eqref{line2}, if $F$ is not a code for $M$, then $F$ has depth greater than $1$ for some column of $M$ by Remark~\ref{rmkdepth1code}. By Definition \ref{defstratify}, $n_s < n+u$. Using Proposition \ref{mainprop}(a) and (b), then
\begin{align*}
    \sum_{\substack{F \in \Sur(V,G) \\ F \text{ not code for $M$}}} \P(FM = 0) &\leq \sum_{\substack{n_1,\ldots,n_s \geq 0\\ n_s <n+u, \sum n_k = n+u}} \sum_{r \in [n+u]} \sum_{F \in \Sur_{\{n_k\},r}(V,G)} \P(FM=0) \\
    & \leq \sum_{\substack{n_1,\ldots,n_s \geq 0\\ n_s <n+u, \sum n_k = n+u}} \sum_{r \in [n+u]} \#\Sur_{\{n_k\},r}(V,G) \max_{F \in \Sur_{\{n_k\},r}(V,G)} \Big(\P(FM = 0)\Big) \\
    &\leq (n+u)^{s+1} Ke^{-c\log(n)^{\lambda}}.
\end{align*}
Since $(n+u)^{s+1}$ is polynomial in $n$ and $\lambda > 1$, we can make $c$ smaller and $K$ larger to yield the desired bound on (\ref{line2}).

For (\ref{line1}), we again note that if $F$ is not a code for $M$, then $F$ has depth greater than 1 for some column of $M$ by Remark~\ref{rmkdepth1code}. Applying Lemma \ref{lemmanoncodes} and using that $u \geq 0$ and $n_0 > \log(n)^\lambda$ in Lemma \ref{lemmanoncodes} yields the desired bound on (\ref{line1}).
\end{proof}

\section{Proof of Proposition \ref{mainprop}({\normalfont a})}\label{sectmainprop}

\subsection{Overview}\label{sect51}
In this section, we will prove Proposition \ref{mainprop}(a). Throughout this section, we freely assume the conventions of Definition \ref{defstratify}. When we refer to $M$, we are implicitly fixing $(\varepsilon,\tau_j,A_j)$ for each $j \in [n+u]$ such that $M_j$ is $(\varepsilon,\tau_j,A_j)$-balanced and $M$ satisfies the assumptions of Theorem \ref{thmmoments}. In the proof of Proposition \ref{mainprop}(a), we take $\lambda = 2$. However, in some lemmas we will simply let $\lambda > 1$, because we will wish to take $\lambda = 1 +\delta/2$ when we use these lemmas in Section \ref{sectmainpropband} in our proof of Proposition \ref{mainprop}(b).

We begin by defining two parameters $D_{1}$ and $D_2$, by letting $D_1$ be the maximal depth of $F$ for a column of $M$, and $D_2$ be the maximal number such that $F$ has depth $\geq D_2$ for many more than $\beta n$ columns of $M$. The definition of $D_2$ requires fixing a constant called $\theta$. The definitions of $D_1,D_2$, and $\theta$ will be used freely throughout this section.

\begin{definition}[Key Parameters $D_1,D_2,\theta$]\label{defkey}
    Choose $\theta$ such that $\beta < \theta < 1-\alpha$. Let $k_1 \geq 1$ be the minimum number such that $n_{k_1} > 0$. Let $D_1 \coloneq [G:H_{k_1}]$. Let $k_2$ be the minimum value such that $\sum_{k=1}^{k_2} n_k \geq \theta n$. Let $D_2 \coloneq [G:H_{k_2}]$. Since $k_1 \leq k_2$, we have $|H_{k_1}| \leq |H_{k_2}|$ and $D_1 \geq D_2$.
\end{definition}

In Lemma \ref{lemmaprobsimple}, we multiply our bounds for each column to bound $\P(FM=0)$ for all $F \in \Sur_{\{n_k\},r}(V,G)$ in terms of $\{n_k\}$ and $r$ or $D_1$ and $D_2$. In Lemma~\ref{mainlemma}, we will bound $\#\Sur_{\{n_k\},r}(V,G)$ in terms of $D_1$ and $D_2$. This is the main technical challenge of this section. To do this, we first show in Lemma \ref{lemmarobustimage} that the number of possibilities for $F|_{W}$ if $W \subseteq W_j$ is a summand and $F$ has depth $D$ for $M_j$ is approximately $(|G|/D)^{\dim W}$. We will then choose a column $M_{j_0}$ for which $F$ has depth $D_1$, and define the submodule $V_0 = W_{j_0} \subseteq V$ which has $\dim V_0 \geq (1-\alpha) n$. We bound the number of possibilities for $F|_{V_0}$ by Lemma~\ref{lemmarobustimage}. Then, we choose many columns of $M$ for which $F$ has depth $\geq D_2$ and use these columns to find linearly independent summands $V_1,\ldots,V_\ell \subseteq V$ where $V_k \subseteq W_{j_k}$ and $F$ has depth $\geq D_2$ for $M_{j_k}$. So, we can bound the number of possibilities for $F|_{V_k}$ using Lemma \ref{lemmarobustimage}. We then define a summand $V_{\ell+1}$ of small dimension such that $V_0+V_1+\cdots+V_\ell+V_{\ell+1} = V$. Multiplying our bounds on the number of possibilities for $F|_{V_0},\ldots,F|_{V_{\ell+1}}$ yields a bound on the number of such $F$. This yields Lemma \ref{mainlemma}, which is strong enough to prove Proposition~\ref{mainprop}(a) when $D_1>D_2$. 

To prove Proposition~\ref{mainprop}(a) when $D_1=D_2$, we must show that in all cases, we have a better bound on $\P(FM=0)$ or we can lower the bound on $\#\Sur_{\{n_k\},r}(V,G)$. We can lower $\P(FM=0)$ using Lemma \ref{lemmaprobsimple} except in the case where there are many columns of depth $D_1$ with different robust images. In this case, we can find two columns $M_j$ and $M_{j_0}$ with different robust images $H$ and $H_0$ of index $D_1$ and a large overlap $W_j \cap W_{j_0}$. So, $F(W_j \cap W_{j_0})$ almost sits inside $H \cap H_0$, which allows us to lower the number of possibilities for $F|_{W_j \cap W_{j_0}}$ and also for $F|_{W_{j_0}} = F|_{V_0}$.

\subsection{Bounding \texorpdfstring{$\P(FM=0)$}{P(FM=0)}}

We first bound the probability that $FM = 0$ for $F \in \Sur_{\{n_k\},r}(V,G)$ by multiplying out our bounds from Lemmas \ref{lemmacodes} and \ref{lemmadepthprobabilitycolumns}. To prove Theorem \ref{thmmoments}, we will mostly use the bound in (\ref{eqprobsimpleright}), except for one case where we use (\ref{eqprobsimpleleft}). To prove Theorem~\ref{thmmomentsband} in Section \ref{sectmainpropband}, we use (\ref{eqprobsimpleleft}).

\begin{lemma}\label{lemmaprobsimple}
Let $M$ be as in Theorem \ref{thmmoments} or Theorem \ref{thmmomentsband}. Let $F \in \Sur_{\{n_k\},r}(V,G)$ with $\lambda >1$. Then, there exists a constant $K >0$ which is independent of $n$ and $\{n_k\}$ and $r$ such that \begin{equation}\label{eqprobsimpleleft}
        \P(FM = 0) \leq K |H_1|^{-n_1}|H_2|^{-n_2}\cdots|H_{s}|^{-n_s}(1-\varepsilon)^r.
    \end{equation}
Furthermore, if $M$ is as in Theorem \ref{thmmoments}, define $D_1,D_2,\theta$ using Definition \ref{defkey}. Then 
\begin{equation}\label{eqprobsimpleright}
        \P(FM = 0) \leq K \left(\frac{D_1}{|G|}\right)^{\theta n}\left(\frac{D_2}{|G|}\right)^{(1-\theta)n}(1-\varepsilon)^r.
    \end{equation}
\end{lemma}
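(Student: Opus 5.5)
We multiply column bounds using independence of columns: $\P(FM=0)=\prod_{j=1}^{n+u}\P(FM_j=0)$. Fix $F\in\Sur_{\{n_k\},r}(V,G)$ and a column $M_j$ for which $F$ has robust image $H_k$, hence depth $D=[G:H_k]$.

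If $D=1$, then $F$ is a code for $M_j$ by Remark~\ref{rmkdepth1code}. Lemma~\ref{lemmacodes} then gives
\[
\P(FM_j=0)\le |G|^{-1}\bigl(1+|G|\exp(-\varepsilon\log(n)^\lambda/a^2)\bigr).
\]
In this case $F(V_{\setminus\tau_j})$-type images equal the robust image $G$, so $F$ cannot have proper robust image for $M_j$, and no factor $(1-\varepsilon)$ is needed.

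If $D>1$, Lemma~\ref{lemmadepthprobabilitycolumns} gives
\[
\P(FM_j=0)\le |H_k|^{-1}\bigl(1+|H_k|\exp(-\varepsilon\log(n)^\lambda/a^2)\bigr)\cdot(1-\varepsilon)^{\mathbb{1}[\text{proper}]},
\]
using $D|G|^{-1}=|H_k|^{-1}$.

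Taking the product over all $n+u$ columns, every column contributes a factor $(1+|G|e^{-\varepsilon\log(n)^\lambda/a^2})$. The product of these is at most
\[
\exp\bigl((n+u)|G|e^{-\varepsilon\log(n)^\lambda/a^2}\bigr).
\]
Because $\lambda>1$, we have $e^{-\varepsilon\log(n)^\lambda/a^2}=n^{-\varepsilon\log(n)^{\lambda-1}/a^2}$, which decays faster than any power of $n$. So this exponent tends to $0$ and the product is bounded by a constant $K$ independent of $n$, $\{n_k\}$ and $r$. The remaining factors are exactly $\prod_k|H_k|^{-n_k}$ and $(1-\varepsilon)^r$, which proves \eqref{eqprobsimpleleft}. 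The only delicate point in this half is the uniformity of $K$, and it follows from this superpolynomial decay.

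For \eqref{eqprobsimpleright}, we compare $\prod_k|H_k|^{-n_k}$ with the claimed bound. We write $|H_k|^{-1}=[G:H_k]/|G|$, so
\[
\prod_k|H_k|^{-n_k}=|G|^{-(n+u)}\prod_k[G:H_k]^{n_k}.
\]
Since $|H_k|$ is nondecreasing in $k$, the indices $[G:H_k]$ are nonincreasing in $k$. Every $k$ with $n_k>0$ has $k\ge k_1$, so $[G:H_k]\le D_1$. For $k>k_2$ we have $[G:H_k]\le D_2$.

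The number of columns with $k\le k_2$ is $m:=\sum_{k\le k_2}n_k$, and the number with $k>k_2$ is $n+u-m$. Hence
\[
\prod_k[G:H_k]^{n_k}\le D_1^{m}D_2^{n+u-m}.
\]
The obstacle in this half is that $m$ may exceed $\theta n$, since we only know $m\ge\theta n$. To handle this, we use the minimality of $k_2$ to refine the count. The columns with $k<k_2$ number fewer than $\theta n$, and those have index at most $D_1$. All columns with $k\ge k_2$ have index at most $D_2$. So the product is at most
\[
D_1^{m'}D_2^{n+u-m'},\qquad m'<\theta n.
\]
Because $D_1\ge D_2$, this is at most $D_1^{\theta n}D_2^{(1-\theta)n+u}$.

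Finally, combining with $|G|^{-(n+u)}$ gives
\[
\Bigl(\tfrac{D_1}{|G|}\Bigr)^{\theta n}\Bigl(\tfrac{D_2}{|G|}\Bigr)^{(1-\theta)n}\Bigl(\tfrac{D_2}{|G|}\Bigr)^{u}.
\]
The last factor is at most $1$ since $D_2\le|G|$. If $\theta n$ is not an integer, rounding is absorbed because each ratio $D_i/|G|$ is at most $1$, and any leftover constant goes into $K$. Together with the factor $(1-\varepsilon)^r$ carried over from the first half, this gives \eqref{eqprobsimpleright}.
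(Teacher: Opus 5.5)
Your proof is correct and follows the paper's argument. You multiply the column bounds from Lemma~\ref{lemmacodes} and Lemma~\ref{lemmadepthprobabilitycolumns}, absorb the factor $(1+|G|e^{-\varepsilon\log(n)^\lambda/a^2})^{n+u}$ into $K$ (via $1+x\le e^x$, where the paper reuses the Lemma~\ref{lemmasimple} argument from Lemma~\ref{lemmacodesmatrix}), and then use the minimality of $k_2$ to see that fewer than $\theta n$ columns have index above $D_2$, while every column has index at most $D_1$.
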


\begin{proof} Remark \ref{rmkdepth1code} says that for every column $M_j$ of $M$, then $F$ is either a code or has depth $D>1$. Lemma~\ref{lemmacodes} gives a bound on $\P(FM_j = 0)$ if $F$ is a code for $M_j$, and Lemma \ref{lemmadepthprobabilitycolumns} gives a bound on  $\P(FM_j = 0)$ if $F$ has depth $D >1$ for $M_j$. By Definition \ref{defstratify}, $F$ has robust image $H_k$ for $n_k$ columns and proper robust image for $r$ columns. Using the independence of the columns of $M$ and applying Lemma \ref{lemmacodes} or Lemma \ref{lemmadepthprobabilitycolumns} to each column, we get 
\[\P(FM=0) = \prod_{j=1}^{n+u} \P(FM_j = 0) \leq (1-\varepsilon)^r \prod_{k=1}^{s} \left(\frac{[G:H_k]}{|G|} + \exp\left(-\varepsilon \log(n)^{\lambda}/a^2\right)\right)^{n_k}.\] Multiplying and dividing by a constant to make the terms in the products equivalent, \begin{align*}\P(FM=0) &\leq (1-\varepsilon)^r \prod_{k=1}^{s} [G:H_k]^{n_k} \cdot \prod_{k=1}^{s} \left(\frac{1}{|G|} + \frac{\exp(-\varepsilon \log(n)^{\lambda}/a^2)}{[G:H_k]}\right)^{n_k}\\
&\leq (1-\varepsilon)^r \prod_{k=1}^{s} [G:H_k]^{n_k}\cdot  \left(|G|^{-1} + \exp(-\varepsilon \log(n)^{\lambda}/a^2)\right)^{n+u} \\
&\leq (1-\varepsilon)^r \prod_{k=1}^{s} [G:H_k]^{n_k}\cdot K|G|^{-n-u} \\
&\leq K (1-\varepsilon)^r \prod_{k=1}^{s} |H_k|^{-n_k}.
\end{align*}
Here, the third inequality came from the exact same logic as in the proof of Lemma \ref{lemmacodesmatrix}. This proves (\ref{eqprobsimpleleft}).

Now, let $M$ be as in Theorem \ref{thmmoments}. If $F$ has robust image $H$ for a column $M_j$, then $F$ has depth $D = [G:H]= |G|/|H|$ for that column. By Definition \ref{defkey}, $F$ has depth $\leq D_1$ for all columns of $M$ and depth $\leq D_2$ for at least $(n+u)-\theta n$ columns of $M$. So, all of the factors of the form $|H_k|^{-1}$ in (\ref{eqprobsimpleleft}) are bounded above by $D_1|G|^{-1}$, and at least $(n+u)-\theta n \geq (1-\theta)n$ of the $|H_k|^{-1}$ factors in (\ref{eqprobsimpleleft}) are bounded above by $D_2|G|^{-1}$. This proves (\ref{eqprobsimpleright}).
\end{proof}

\subsection{Bounding \texorpdfstring{$\#\Sur_{\{n_k\},r}(V,G)$}{the number of surjections}}
We now bound the number of elements in $\Sur_{\{n_k\},r}(V,G)$. This is the main technical lemma of Section \ref{sectmainprop}.

\begin{lemma}\label{mainlemma}
    Let $M$ be as in Theorem \ref{thmmoments}, let $\lambda = 2$, and define $D_1,D_2,\theta$ using Definition \ref{defkey}. Let $\zeta > 0$. Then, there exists $K>0$ such that  \begin{equation}\label{eqLemma53}
        \#\Sur_{\{n_k\},r}(V,G) \leq K \left(\frac{|G|}{D_1}\right)^{(1-\alpha)n} \left(\frac{|G|}{D_2}\right)^{\alpha n} e^{\zeta n}.
    \end{equation}
\end{lemma}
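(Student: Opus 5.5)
Following the outline in Section \ref{sect51}, I will count $F \in \Sur_{\{n_k\},r}(V,G)$ by choosing a direct-sum-like decomposition of $V$ into pieces on each of which $F$ is constrained to lie (mostly) in a robust image. The first step is a counting lemma (the Lemma \ref{lemmarobustimage} promised in the overview). If $W \subseteq W_j$ is a summand and $F$ has depth $D$ for $M_j$ with robust image $H$, then the number of possible restrictions $F|_W$ is at most $K e^{\zeta' \dim W}(|G|/D)^{\dim W}$ for any fixed $\zeta'>0$.

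To prove this, write $H = FA_j^{-1}(V_{\setminus \sigma\cup\tau_j})$ with $|\sigma| < |G|\log(n)^2$. Then $F$ maps $W_j \cap A_j^{-1}V_{\setminus\sigma\cup\tau_j} = \langle v_i^j : i \notin \sigma\cup\tau_j\rangle$ into $H$, and this submodule has codimension less than $|G|\log(n)^2$ in $W_j$. Intersecting with $W$ gives a summand of $W$ of codimension at most $|G|\log(n)^2$ on which $F$ lands in $H$. The count is then bounded by three factors:
\begin{itemize}
\item a factor $s \binom{n}{|G|\log(n)^2}$ for the choice of $H$ and $\sigma$;
\item a factor $|G|^{|G|\log(n)^2}$ for the values on a complement;
\item a factor $|H|^{\dim W}$ for the values on the rest.
\end{itemize}
Since $\log(n)^3 = o(n)$, the first two factors are $e^{o(n)}$. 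This yields the bound whenever $\dim W$ is linear in $n$; otherwise I absorb the error into $e^{\zeta n}$ directly.

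Next comes the decomposition. Pick $j_0$ with depth $D_1$ and set $V_0 = W_{j_0}$, so $\dim V_0 \ge (1-\alpha)n$. This gives at most $(|G|/D_1)^{\dim V_0} e^{o(n)}$ choices for $F|_{V_0}$. I then need to cover a complement of $V_0$, of dimension at most $\alpha n$, using columns of depth at least $D_2$. By the definition of $k_2$, fewer than $\theta n$ columns have depth greater than $D_2$, so at least $(1-\theta)n$ columns have depth at most $D_2$. This inequality points the wrong way for constraining $F$. I will instead use the columns of depth at least $D_2$: more than $\theta n > \beta n$ columns have robust image among $H_1,\dots,H_{k_2}$, hence depth at least $D_2$.

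Let $S$ be this set of columns, with $|S| > \theta n$. The key linear-algebra claim is that $\sum_{j\in S} W_j = V$, indeed robustly. Condition (\ref{eqweirdcondition}) says each basis vector $u_i$ fails to lie in $W_j$ for at most $\beta n < |S|$ columns, so every $u_i$ lies in some $W_j$ with $j \in S$. I want more: a bounded number $\ell$ (independent of $n$) of columns $j_1,\dots,j_\ell \in S$ and summands $V_k \subseteq W_{j_k}$ with $V_0 + V_1 + \dots + V_\ell$ of codimension at most $\zeta n$.

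Greedy choice works here. At each step, if the current sum $U$ has codimension at least $\zeta n$, then some $j \in S$ has $\dim(W_j + U)/U \ge c\zeta n$. To see this, average over $j\in S$. Each $u_i$ outside $U$ is missed by at most $\beta n$ columns, while $|S| \ge \theta n$. Counting incidences between the basis vectors $u_i$ not in $U$ and columns in $S$ containing them gives a column $W_j$ containing at least $(1-\beta/\theta)$ times the number of such $u_i$. Working mod $p$, this forces $\dim(W_j+U)/U$ to be at least a constant times $\operatorname{codim} U$. So after $O(\log(1/\zeta))$ steps the codimension falls below $\zeta n$.

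I expect making this averaging rigorous to be the main obstacle. The $u_i$ outside $U$ being in $W_j$ does not immediately give independent new directions. I would handle it by choosing $U$ spanned by some of the $u_i$ themselves, taking $V_k = \langle u_i \in W_{j_k} \text{ new}\rangle$, which is a summand since the $u_i$ form a basis. Finally, set $V_{\ell+1}$ to be a complementary summand of dimension at most $\zeta n$, which contributes at most $|G|^{\zeta n}$ choices.

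Multiplying the counts gives $(|G|/D_1)^{\dim V_0}(|G|/D_2)^{n-\dim V_0-\dim V_{\ell+1}}|G|^{\zeta n}e^{o(n)}$. Since $D_1 \ge D_2$ and $\dim V_0 \ge (1-\alpha)n$, this is at most $(|G|/D_1)^{(1-\alpha)n}(|G|/D_2)^{\alpha n}$ up to $e^{O(\zeta)n}$. Rescaling $\zeta$ then gives (\ref{eqLemma53}).
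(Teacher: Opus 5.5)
Your architecture is the paper's: the counting Lemma~\ref{lemmarobustimage}, $V_0=W_{j_0}$ for a column of depth $D_1$, a bounded number of greedy rounds over the at least $\theta n$ columns of depth $\ge D_2$ with averaging ratio $1-\beta/\theta$, a leftover piece of dimension $O((\beta/\theta)^\ell n)$, and the final comparison using $D_1\ge D_2$ and $\dim V_0\ge(1-\alpha)n$.

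\textbf{The gap: the step you flagged is not resolved.} It matters because your final product $(|G|/D_1)^{\dim V_0}\prod_k(|G|/D_2)^{\dim V_k}|G|^{\dim V_{\ell+1}}$ is the right bound only if $V=V_0\oplus V_1\oplus\cdots\oplus V_{\ell+1}$ is a direct sum, so that the exponents add to $n$. If the current sum $U$ contains $V_0=W_{j_0}$, the $u_i\notin U$ can be badly dependent modulo $U$. For example, if $V_0\supseteq\langle u_i-u_1: 2\le i\le m\rangle$, then $u_1,\dots,u_m$ all lie in $u_1+V_0$. A column containing all of them adds at most one new dimension modulo $U$, so the incidence count gives no lower bound on $\dim(W_j+U)/U$. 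Your remedy, taking $U$ spanned by $u_i$'s, is incompatible with $U\supseteq V_0$, since $W_{j_0}$ is spanned by the $v_i^{j_0}$, not by $u_i$'s. If instead you run the greedy on all the $u_i$ and ignore $V_0$, the resulting pieces overlap $V_0$ and their dimensions sum to more than $n$.

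\textbf{The fix is an exchange step before the greedy.}
\begin{enumerate}
\item Extend the $\F_p$-basis $\{\ol{v_i^{j_0}}: i\notin\tau_{j_0}\}$ of $\ol{V_0}$ to a basis of $\F_p^n$ by vectors $\ol{u_i}$, $i\in S_1$. Then $|S_1|=|\tau_{j_0}|\le\alpha n$, and lifting gives $V=V_0\oplus\langle u_i:i\in S_1\rangle$.
\item Run your averaging only over $S_1$. Setting $S_{k+1}=\{i\in S_k: u_i\notin W_{j_k}\}$ gives $|S_{k+1}|\le(\beta/\theta)|S_k|$.
\item Put $V_k=\langle u_i: i\in S_k\setminus S_{k+1}\rangle\subseteq W_{j_k}$ and $V_{\ell+1}=\langle u_i:i\in S_{\ell+1}\rangle$. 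Together with $V_0$ these form a genuine direct-sum decomposition of $V$ into summands.
\end{enumerate}
This is exactly the paper's argument.

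\textbf{A smaller correction to your counting lemma.} Over $\Z/a\Z$ the intersection $W\cap A_j^{-1}V_{\setminus\sigma\cup\tau_j}$ need not be a summand; for instance $\langle(1,0)\rangle\cap\langle(1,2)\rangle=\{0,(2,0)\}$ in $(\Z/4\Z)^2$. Use lifts of an $\F_p$-basis of its reduction instead. These span a summand of rank at least $\dim W-|\sigma|$, as in the paper.
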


We begin by bounding the number of possibilities for $F|_{W}$, where $W$ is a summand which is a submodule of the $\varepsilon$-balanced locus $W_j$ of a column and $F$ has robust image $H$ for $M_j$. The idea is that there is a submodule $U \subseteq W_j$ of small codimension such that $F(U) \subseteq H$. This means that $F(U \cap W) \subseteq H$, so the number of possibilities for $F|_{W}$ are roughly $|H|^{\dim W}$, up to an error term which comes from the choice of $U$ and $F$ evaluated at the elements of $W \setminus U$. We use that $W$ is a summand to ensure that it can be generated by $\dim W$ many elements. This proof can be thought of as a basis-independent version of the proof of Lemma~\ref{lemmanoncodes}.

\begin{lemma}\label{lemmarobustimage}
    Let $\lambda >1$ and let $M_j$ be $(\varepsilon,\tau_j,A_j)$-balanced with $\tau_j \subsetneq [n]$ and $W_j \coloneq A_j^{-1}(V_{\setminus \tau_j})$. Let $W \subseteq W_j$ be a summand and let $H \leq G$ have $D = [G:H]$. Then \[
        \#\{\text{$F|_W$}: F \in \Sur(V,G) \text{ has robust image $H$ for $M_j$}\} \leq \Big(\left(n-|\tau_j|\right)|G|\Big)^{|G|\log(n)^{\lambda}} \left(\frac{|G|}{D} \right)^{\dim W}.
    \]
\end{lemma}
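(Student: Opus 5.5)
The plan is to mimic the counting in the proof of Lemma~\ref{lemmanoncodes}, but in a basis-independent way adapted to the summand $W$. Fix $F$ with robust image $H$ for $M_j$. By Definition~\ref{defrobustimage} there is $\sigma \subset [n]$ with $|\sigma| < D\log(n)^\lambda \leq |G|\log(n)^\lambda$ and $FA_j^{-1}(V_{\setminus \sigma\cup\tau_j}) = H$. We may assume $\sigma \subseteq [n]\setminus\tau_j$, since indices in $\tau_j$ are already removed. Set $U \coloneq A_j^{-1}(V_{\setminus\sigma\cup\tau_j})$. Then $U$ is a summand of $W_j$, spanned by the $v_i = A_j^{-1}e_i$ with $i\notin \sigma\cup\tau_j$, and $W_j = U \oplus U'$ where $U' = \langle v_i : i \in \sigma\setminus\tau_j\rangle$ has dimension $|\sigma| < |G|\log(n)^\lambda$. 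The key property is $F(U)\subseteq H$.

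The first step is to count the choices of $\sigma$. Since $\sigma\subseteq[n]\setminus\tau_j$ and $|\sigma|<|G|\log(n)^\lambda$, there are at most $(n-|\tau_j|)^{|G|\log(n)^\lambda}$ choices; this is crude but adequate, using for example ordered tuples padded with repetitions. Here $\tau_j \subsetneq [n]$ guarantees $n - |\tau_j|\geq 1$. Once $\sigma$ is fixed, $U$ is determined, and it remains to count the possible restrictions $F|_W$ with $F(U)\subseteq H$.

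The second step, which I expect to be the main obstacle, is to choose a basis of $W$ compatible with $U$. Let $W\cap U$ be the relevant intersection. Working modulo $p$, we have $\dim(\ol{W}\cap\ol{U}) \geq \dim W - \dim \ol{U'}$, which is greater than $\dim W - |G|\log(n)^\lambda$. The difficulty is that $W\cap U$ need not be a summand over $\Z/a\Z$. To avoid this, I would argue via the projection $\pi: W_j \to U'$ along $U$, restricted to $W$. Since $W$ is free of rank $\dim W$, I would pick a basis $w_1,\ldots,w_{\dim W}$ of $W$ such that $\pi(w_{t+1}),\ldots,\pi(w_{\dim W})$ are $0$ and $t \leq \dim U' < |G|\log(n)^\lambda$. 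This is done by Smith normal form for the map $\pi|_W: (\Z/a\Z)^{\dim W}\to U'\cong(\Z/a\Z)^{|\sigma|}$. That normal form is not quite enough, however: the diagonal entries may be nonzero non-units, so the remaining basis vectors only map into $p$-power multiples rather than to $0$. To handle this, I would instead let $w_{t+1},\dots$ be chosen so that $\pi(w_k) = 0$ exactly. The image of $\pi|_W$ is generated by at most $|\sigma|$ elements, so the kernel of $\pi|_W$ has a generating set of the form $\{w_k : k>t\}\cup\{p^{e}w_k\}$. It is then more robust to count differently: $F|_W$ is determined by its values on the basis, and $F(w) \in H$ for every $w\in W\cap U = \ker \pi|_W$. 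Using the Smith form basis, $\ker\pi|_W$ is spanned by $w_k$ for $k>t$ together with $p^{e_k}w_k$ for $k\le t$, where $t\leq |\sigma|$. Thus $F(w_k)\in H$ for $k>t$, giving $|H|$ choices each, and at most $|G|$ choices for each $k\le t$.

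Putting this together, for each $\sigma$ there are at most $|G|^{|G|\log(n)^\lambda}(|G|/D)^{\dim W}$ possible restrictions, since $|H| = |G|/D$ and $t<|G|\log(n)^\lambda$ gives at most $|G|^t \leq |G|^{|G|\log(n)^\lambda}$ choices on the first $t$ basis vectors. Multiplying by the $(n-|\tau_j|)^{|G|\log(n)^\lambda}$ choices of $\sigma$ yields the stated bound $\big((n-|\tau_j|)|G|\big)^{|G|\log(n)^\lambda}(|G|/D)^{\dim W}$.
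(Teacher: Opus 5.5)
Your proof is correct, and its skeleton is the paper's. You fix $\sigma$ so that $F(U)\subseteq H$ for $U=A_j^{-1}(V_{\setminus\sigma\cup\tau_j})$, count the possible $\sigma$, find a basis of $W$ all but at most $|\sigma|$ of whose vectors lie in $W\cap U$, and count values on that basis. You differ in how that basis is produced.

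The paper sets $W'=W\cap A_j^{-1}V_{\setminus\sigma}$, which is the same module as your $\ker\pi|_W$. It asserts $\dim\ol{W'}\geq\dim W-\floor{|G|\log(n)^{\lambda}}$ by reducing (\ref{eq5.1}) mod $p$, lifts an $\F_p$-basis of $\ol{W'}$ to elements of $W'$ spanning a summand, and extends this to a basis of $W$. It also splits off the case $n-|\tau_j|\leq|G|\log(n)^{\lambda}$, so that $\sigma$ can be enlarged to size exactly $\floor{|G|\log(n)^{\lambda}}$ and counted by a binomial coefficient.

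You instead take the Smith normal form of $\pi|_W\colon W\to U'$. This gives the basis at once, since every basis vector beyond the first $\min(\dim W,|\sigma|)$ maps to $0$. Your route buys rigor. The paper's justification of (\ref{eqdimWprime}) tacitly treats $\ol{W'}$ as $\ol{W}\cap\ol{A_j^{-1}V_{\setminus\sigma}}$, but reduction mod $p$ does not commute with intersection. For instance, in $(\Z/p^2\Z)^2$ the intersection $\langle e_1+pe_2\rangle\cap\langle e_1\rangle=\langle pe_1\rangle$ reduces to $0$, while $\ol{\langle e_1+pe_2\rangle}\cap\ol{\langle e_1\rangle}\neq 0$. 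The inequality is nevertheless true, and what proves it is exactly your observation: the kernel of a map from a free module of rank $d$ to one of rank $c$ contains a free direct summand of the source of rank at least $d-c$. The paper's route, for its part, stays in the mod-$p$ dimension language it reuses in Case 3 of the proof of Proposition \ref{mainprop}(a).

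There are three small slips, all easily repaired.
\begin{itemize}
\item Your worry that the Smith form is ``not quite enough'' is unfounded. Non-unit diagonal entries only affect the first $t$ basis vectors, which you already give $|G|$ choices each.
\item The kernel contains $p^{m-e_k}w_k$ for $a=p^m$, not $p^{e_k}w_k$. You never use this description, so the count is unaffected.
\item Padded tuples never produce $\sigma=\emptyset$, so your claimed bound of $(n-|\tau_j|)^{|G|\log(n)^{\lambda}}$ choices of $\sigma$ can fail. For example, if $n-|\tau_j|=1<|G|\log(n)^{\lambda}$ there are two admissible $\sigma$ but the bound is $1$. Fix this by noting that enlarging $\sigma$ only shrinks $U$ and preserves $F(U)\subseteq H$. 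So you may assume $\sigma\neq\emptyset$ whenever $|G|\log(n)^{\lambda}>1$; otherwise $\sigma=\emptyset$ is the only option.
\end{itemize}
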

\begin{proof}
    If $n- |\tau_j| \leq |G|\log(n)^{\lambda}$, then $\dim W \leq \dim W_j  = n- |\tau_j| \leq |G|\log(n)^{\lambda}$. As $W$ is a summand, it is generated by $\dim W$ many generators. So there are at most $|G|^{\dim W} \leq |G|^{|G|\log(n)^{\lambda}}$ possibilities for $F|_W$, and the lemma holds. So, assume $n- |\tau_j| > |G|\log(n)^{\lambda}$.
    
    Let $F$ have robust image $H$ for $M_j$, with $[G:H] = D$. Let $\sigma \subseteq[n]$ with ${|\sigma| < D\log(n)^{\lambda} \leq |G|\log(n)^{\lambda}}$ satisfy $H = F(A^{-1}_jV_{\setminus \sigma \cup \tau_j})$, by Definition \ref{defrobustimage}. Define \[W' = W \cap A^{-1}_j V_{\setminus \sigma \cup \tau_j},\] so that $F(W') \subseteq H$. As $W \subseteq W_j$, \begin{equation}\label{eq5.1}
        W' =W \cap A^{-1}_j V_{\setminus \sigma}  \cap A^{-1}_j V_{\setminus \tau_j}= W \cap A^{-1}_j V_{\setminus \sigma} \cap W_j = W \cap A^{-1}_j V_{\setminus \sigma}.
    \end{equation} Without loss of generality, we can assume $\sigma \subseteq [n] \setminus \tau_j$ and we can shrink $W'$ by enlarging $\sigma$ to make ${|\sigma| = \floor{|G|\log(n)^{\lambda}}}$. This gives us at most $\binom{n - |\tau_j|}{\floor{|G|\log(n)^{\lambda}}}$ choices for $\sigma$, and hence $W'$. We can reduce (\ref{eq5.1}) mod $p$ and use that ${\dim_{\F_p} \big(\ol{A^{-1}_j V_{\setminus \sigma}}\big) = n-\floor{|G|\log(n)^{\lambda}}}$ to see that \begin{equation}\label{eqdimWprime}
        \dim W' =\dim_{\F_p} \left(\ol{W'}\right) \geq \dim_{\F_p} \left(\ol{W}\right) - \floor{|G|\log(n)^{\lambda}} = \dim W- \floor{|G|\log(n)^{\lambda}}.
    \end{equation}
    
    Choose $\{w_1,\ldots,w_{\dim W'}\} \subseteq W'$ deterministically so that $\{\ol{w_1},\ldots,\ol{w_{\dim W'}}\}$ is a basis for $\ol{W'}$. This implies $\langle w_1,\ldots,w_{\dim W'}\rangle$ is a summand  because it has the same number of generators as its reduction mod $p$. As $W$ is a summand, extend this deterministically to a basis $\{w_1,\ldots,w_{\dim W}\}$ for $W$. Then, $F|_W$ is determined uniquely by $F(w_1),\ldots,F(w_{\dim W})$. As $F(W') \subseteq H$, there are at most $|H| = |G|/D$ choices for $F(w_i)$ if ${1 \leq i \leq \dim W'}$. There are at most $|G|$ choices for $F(w_i)$ for $\dim W' < i \leq \dim W$. Multiplying our number of choices for $W'$ and number of choices for $F(w_1),\ldots,F(w_{\dim W})$ and using (\ref{eqdimWprime}) gives \begin{align*}\#\{F|_W: F \text{ has robust image $H$ for $M_j$}\} &\leq \binom{n - |\tau_j|}{\floor{|G|\log(n)^{\lambda}}} \left(\frac{|G|}{D} \right)^{\dim W'} |G|^{\dim W - \dim W'} \\
    &\leq \binom{n - |\tau_j|}{\floor{|G|\log(n)^{\lambda}}} \left(\frac{|G|}{D} \right)^{\dim W - \floor{|G|\log(n)^{\lambda}}} |G|^{\floor{|G|\log(n)^{\lambda}}}. 
    \end{align*} This is less than our desired bound because we always have $\binom{m}{k} \leq m^k$ for $m,k >0$.
\end{proof}

To prove Lemma \ref{mainlemma}, we will find many summands $V_0,V_1,V_2,\ldots,V_\ell,V_{\ell+1}$ which generate $V$ such that ${\dim V_0 \geq (1-\alpha)n}$ and $\dim(V_0+\cdots+V_\ell) \geq (1-\delta) n$, where $\delta >0$ can be arbitrarily small. $V_0$ will be a submodule of the $\varepsilon$-balanced locus of a column where $F$ has depth $D_1$, and $V_k$ for $1\leq k \leq \ell$ will be submodules of the $\varepsilon$-balanced loci of columns where $F$ has depth $\geq D_2$. By applying Lemma \ref{lemmarobustimage}, then there are at most $2^{\gamma n}(|G|/D_1)^{\dim V_0}$ possibilities for $F|_{V_0}$ and at most $2^{\gamma n} (|G|/D_2)^{\dim V_k}$ possibilities for $F|_{V_k}$ for $1\leq k\leq \ell$, where $\gamma>0$ is arbitrarily small and $n$ is sufficiently large. Multiplying these bounds out with the bound that there are at most $|G|^{\dim V_{\ell+1}}$ possibilities for $F|_{V_{\ell+1}}$ yields Lemma \ref{mainlemma}. 

\begin{proof}[Proof of Lemma \ref{mainlemma}]
    If $D_1 = 1$, then this lemma is trivial because $\#\Sur(V,G) \leq |G|^n$. So, let ${D_1 >1}$. Throughout this proof, we set $\lambda = 2$. We fix a random matrix ensemble $M \in \M_{n \times (n+u)}(\Z/a\Z)$ and utilize the conventions from Theorem \ref{thmmoments}. In particular, for $j \in [n+u]$ we let $M_j$ be $(\varepsilon,\tau_{j},A_{j})$-balanced with $|\tau_{j}| \leq \alpha n$ and $A_j \in \GL_n(\Z/a\Z)$. We take $v_i^j \coloneq A_j^{-1}(e_i)$ and $W_j = \langle v_i^j: i \in [n] \setminus \tau_j\rangle$.

    Let $F \in \Sur_{\{n_k\},r}(V,G)$. By Definition \ref{defkey}, choose $j_0 \in [n+u]$ such that $F$ has depth $D_1$ for the column $M_{j_0}$. Let $V_0 \coloneq W_{j_0}$, which is a summand since $W_{j_0}$ is generated by a subset of the basis $\{v_1^{j_0},\ldots,v_n^{j_0}\}$ for $V$. There are at most $n+u$ choices for $j_0$ and hence $V_0$, and at most $s$ choices for the robust image of $F$ for $M_{j_0}$. For $F \in \Sur_{\{n_k\},r}(V,G)$, Lemma \ref{lemmarobustimage} gives \begin{equation}\label{eqV0count}\#\{\text{possibilities for $V_0$ and $F|_{V_0}$}\} \leq (n+u)s \big(n|G|\big)^{|G|\log(n)^2} \left(\frac{|G|}{D_1}\right)^{\dim V_0}.
    \end{equation}

    Using the assumptions of Theorem \ref{thmmoments}, we now have two bases $\{v_1^{j_0},\ldots,v_n^{j_0}\}$ and $\{u_1,\ldots,u_n\}$ for $V$. Recall that $\{v_1,\ldots,v_n\}$ is a basis for $V$ exactly when $\{\ol{v_1},\ldots,\ol{v_n}\}$ is a basis for $\F_p^n$. By working over $\F_p$, this means that we can choose a set $S_1 \subseteq [n]$ such that $\{v_i^{j_0}:i \in [n]\setminus \tau_{j_0}\} \cup \{u_i:i\in S_1\}$ forms a basis for $V$. We fix one deterministic choice of $S_1$ for each $V_0$. As any basis for $V$ has $n$ vectors, then we can take $|S_1| = |\tau_{j_0}| \leq \alpha n$. Using this basis yields that \begin{equation}\label{eqlinind}
        V_0 + \langle u_i:i\in S_1 \rangle = V,
    \end{equation} with the two submodules on the left being linearly independent. 

    We now describe a sequence of linearly independent summands $V_1, V_2, V_3,\ldots, V_{\ell} \subseteq \langle u_i : i \in S_1\rangle$ ($\ell$ will be defined later), where $j_k \in [n+u]$ is such that $V_k \subseteq W_{j_k}$ and $F$ has depth $\geq D_2$ for $M_{j_k}$. In particular, we will describe a sequence of sets $S_1,S_2,S_3\ldots$ where $S_k \supseteq S_{k+1}$, and $V_k = \langle u_i: i \in S_k \setminus S_{k+1}\rangle$. Assuming that $S_k$ is defined, we will show how to define $S_{k+1}$ and $V_k$ in (\ref{eqdefVk}). Since $S_1$ is defined already, this will define $S_{k}$ and $V_k$ for all $1\leq k \leq \ell$.

    By Definition \ref{defkey}, there are at least $\theta n$ columns for which $F$ has depth at least $D_2$. By reordering the columns of $M$, which does not affect anything, call these columns $M_1,\ldots,M_{\ceil{\theta n}}$. Let $1 \leq j \leq \ceil{\theta n}.$ By Definition \ref{defepsbalanced}, $\tilde M_j \coloneq A_j M_j$ has independent entries and we can let $\tau_j = \{i: (\tilde M_j)_i \text{ is not $\varepsilon$-balanced}\}.$

    Using the assumptions of Theorem \ref{thmmoments}, we have  \[\#\{(i,j) \in S_k \times [\ceil{\theta n}]: u_i \not \in W_j\} =\sum_{i \in S_k}\#\{j \in [\ceil{\theta n}]:u_i \not \in W_j\} \leq\sum_{i \in S_k}\#\{j \in [n+u]:u_i \not \in W_j\} \leq |S_k|\beta n.\] As $\theta > \beta$ by Definition \ref{defkey}, this yields the following lower bound: \[\#\{(i,j) \in S_k \times [\ceil{\theta n}]: u_i \in W_j\} \geq |S_k| \ceil{\theta n} -|S_k|\beta n > 0.\] As these entries $(i,j)$ are split among $\ceil{\theta n}$ columns, there exists $j_k \in [\ceil{\theta n}]$ such that \begin{equation}\label{eqbound}\#\{i \in S_k: u_i \in W_{j_k}\} \geq \frac{|S_k|(\ceil{\theta n}-\beta n)}{\ceil{\theta n}} \geq |S_k|\left(1-\frac{\beta}{\theta}\right).\end{equation}
    
    Then, define \begin{equation}\label{eqdefVk}S_{k+1} \coloneq \{i \in S_k: u_i \not \in W_{j_k}\}\text{ and } V_k \coloneq \langle u_i : i \in S_k \text{ and } u_i \in W_{j_k} \rangle = \langle u_i: i \in S_k \setminus S_{k+1}\rangle.\end{equation} This clearly implies that $V_k \subseteq W_{j_k}$. 
    Because $V_k$ is generated by a subset of the basis $\{u_1,\ldots,u_n\}$, then $V_k$ is a summand. Let $F$ have robust image $H_{i_k}$ for $M_{j_k}$, where $[G:H_{i_k}] \geq D_2$. 
    
    Having chosen $j_1,\ldots,j_{k-1}$ and $i_1,\ldots,i_{k-1}$, we will now choose $j_k$ and $i_k$. There are at most $n+u$ choices for $j_k \in [n+u]$ and at most $s$ choices for $i_k \in [s]$. Choosing $j_k$ will inductively determine $S_{k+1}$ and $V_k$ by (\ref{eqdefVk}). For $F \in \Sur_{\{n_k\},r}(V,G)$ with $V_0,V_1,\ldots,V_{k-1}$ fixed, Lemma \ref{lemmarobustimage} gives \begin{equation}\label{eqVkcount}\#\{\text{possibilities for $V_k$ and $F|_{V_k}$}\} \leq (n+u)s \big(n|G|\big)^{|G|\log(n)^2} \left(\frac{|G|}{D_2}\right)^{\dim V_k}.
    \end{equation}

    Let $\ell \geq 1$. Let $V_{\ell+1} \coloneq \langle u_i: i \in S_{\ell+1}\rangle$, so (\ref{eqdefVk}) yields that $V_1 + \cdots + V_{\ell+1} =\langle u_i:i\in S_1 \rangle$ and these are all linearly independent. Then $V_0 + V_1 + \cdots + V_{\ell+1} = V$ by (\ref{eqlinind}), with all of these submodules being linearly independent summands. So, $F$ is determined uniquely by $F|_{V_0}, F|_{V_1},\ldots,F|_{V_\ell},F|_{V_{\ell+1}}$. Note that there are at most $|G|^{|S_{\ell+1}|} = |G|^{\dim V_{\ell+1}}$ possibilities for $F|_{V_{\ell+1}}$ because $F|_{V_{\ell+1}}$ is determined uniquely by $F(u_i)$ for $i \in S_{\ell+1}$. Multiplying this with (\ref{eqV0count}) and (\ref{eqVkcount}), 
    \begin{equation}\label{eqfirst}\#\Sur_{\{n_k\},r}(V,G) \leq ((n+u)s)^{\ell+1} \big(n|G|\big)^{(\ell+1)|G|\log(n)^2} \left(\frac{|G|}{D_1}\right)^{\dim V_0} \prod_{k=1}^\ell \left( \frac{|G|}{D_2}\right)^{\dim V_k} \cdot |G|^{\dim V_{\ell+1}}.
    \end{equation}
    For $n$ sufficiently large, there exists $C >0$ (which can depend on $G$) such that \[(n+u)s \big(n|G|\big)^{|G|\log(n)^2} \leq e^{C \log(n)^3}.\] Plugging this into (\ref{eqfirst}) and using that $n=\dim V_0 + \dim V_1 + \cdots + \dim V_\ell + \dim V_{\ell+1}$ by linear independence yields that, for $n$ sufficiently large,
    \begin{equation}\label{eqsecond}
        \#\Sur_{\{n_k\},r}(V,G)\leq e^{(\ell + 1)C\log(n)^3}|G|^n D_1^{-\dim V_0}  D_2^{-(n-\dim V_0 - \dim V_{\ell+1})}.
    \end{equation}

    By (\ref{eqbound}) and (\ref{eqdefVk}), $|S_{k+1}| \leq |S_k|\frac{\beta}{\theta}$. As $|S_1| \leq \alpha n$, this implies that $\dim V_{\ell + 1} = |S_{\ell+1}| \leq \alpha n \frac{\beta^\ell}{\theta^\ell}$. Because $D_1 \geq D_2$ and $\dim V_0 \geq (1-\alpha)n$, then $D_1^{-\dim V_0}  D_2^{-(n-\dim V_0)} \leq D_1^{(\alpha-1)n} D_2^{-\alpha n}$. This simplifies (\ref{eqsecond}) to
    \begin{equation}\label{eqfinal}\#\Sur_{\{n_k\},r}(V,G) \leq e^{(\ell + 1)C\log(n)^3}|G|^{n} \frac{1}{D_1^{(1-\alpha)n}}   \frac{1}{D_2^{\alpha n \left(1 - \left(\frac{\beta}{\theta}\right)^{\ell}\right)}}.
    \end{equation}
    We now choose $\ell$. Without loss of generality, let $\zeta < \alpha$. If $D_2 > 1$, choose $\ell \geq 1$ such that \[\ell (\log \beta - \log \theta) < \log \zeta -\log \alpha - \log(\log D_2),\] which is possible since $\log \beta - \log \theta < 0$ because $\theta > \beta$. Exponentiating both sides, $\alpha  \left(\frac{\beta}{\theta}\right)^{\ell}\log(D_2) < \zeta$. If $D_2 = 1$, choose $\ell =1$ and this will also hold. Then, for $n$ sufficiently large, because $\log(n)^3 \in o(n)$, we have \[(\ell+1)C \log(n)^3 + \alpha  \left(\frac{\beta}{\theta}\right)^{\ell}\log(D_2)n < \zeta n.\] Exponentiating both sides yields that, for $n$ sufficiently large, \begin{equation*}\label{eqgoal}e^{(\ell + 1)C\log(n)^3} D_2^{\alpha n \left(\frac{\beta}{\theta}\right)^{\ell}} \leq e^{\zeta n}.\end{equation*} Plugging this into (\ref{eqfinal}) yields Lemma \ref{mainlemma} for $n$ sufficiently large, and we can choose $K$ to be large enough so that (\ref{eqLemma53}) holds for all smaller values of $n$.
\end{proof}

\subsection{Putting it all together}

When $D_1 > D_2$, then Proposition \ref{mainprop}(a) follows quickly from Lemmas~\ref{lemmaprobsimple} and \ref{mainlemma}. If $D_1 = D_2$, we need to do some more work. We will split into three cases depending on $\{n_k\}$, which are roughly as follows: (1) $F$ has depth $<D_1$ for many columns; (2) $F$ has the same robust image for most columns; (3) $F$ has many different robust images of index $D_1$. Cases 1 and 2 follow by using more detailed bounds from Lemma \ref{lemmaprobsimple} but Case 3 is subtle, as we will improve the bound in (\ref{eqV0count}). We first find a reasonably large intersection $W = W_{j_0}\cap W_{j}$, where $F$ has distinct robust images $H_0$ and $H$ of index $D$ for $M_{j_0}$ and $M_{j}$, respectively. Using similar methods to the proof of Lemma \ref{lemmarobustimage} will allow us to find a submodule $W' \subseteq W$ of low codimension such that $F(W') \subseteq H_0 \cap H$. This will lower the number of possibilities for $F|_{W_{j_0}} = F|_{V_0}$, lowering the bound in (\ref{eqV0count}).
\begin{proof}[Proof of Proposition \ref{mainprop}(a)]
    Let $N \coloneq \#\Sur_{\{n_k\},r}(V,G)$ and $P \coloneq \displaystyle\max_{F \in \Sur_{\{n_k\},r}(V,G)} \Big(\P(FM = 0)\Big)$, so we want to bound $NP$. Let $q \coloneq \frac{D_1}{D_2}$. By Lemma \ref{lemmaprobsimple}, specifically (\ref{eqprobsimpleright}), \begin{equation}\label{eqP}
        P \leq K |G|^{-n} D_2^n q^{\theta n}.
    \end{equation} By Lemma \ref{mainlemma}, for any $\zeta >0$, \begin{equation}\label{eqN}
        N \leq K |G|^{n} D_2^{-n} q^{-(1-\alpha)n}e^{\zeta n}.
    \end{equation} Multiplying the last two inequalities, \begin{equation}\label{eqmain}
        NP \leq K \left(q^{\theta + \alpha -1}e^\zeta\right)^n.
    \end{equation} Note that $\theta +\alpha- 1 <0$ by Definition \ref{defkey}. If $D_1 >D_2$, then $q > 1$ and we can choose $0 < \zeta <(1 - \alpha - \theta)\log q$. Then $q^{\theta + \alpha -1}e^\zeta < 1$, so (\ref{eqmain}) gives $NP \leq Ke^{-cn}$ for some small $c>0$ and some $K>0$. This yields Proposition \ref{mainprop}(a) when $D_1 > D_2$. 
    
    If $D_1 = 1$, then $F$ has robust image $G$ for all columns of $M$ by Definition \ref{defrobustimage} and we thus have $n_s = n+u$, which contradicts the assumptions of Proposition \ref{mainprop}(a).

    So, we can let $D_1 = D_2 > 1$. Define $D \coloneq D_1 = D_2$. Then $q =1$ so (\ref{eqmain}) becomes $NP \leq Ke^{\zeta n}$. So, it suffices to multiply this bound on $NP$ by a factor of $e^{-\gamma n}$ for any fixed $\gamma > 0$. We will do this by splitting into three cases, using often that $\beta < \theta < 1-\alpha$.

    \textbf{Case 1:} $M$ has at most $(\theta + \alpha)n$ columns of depth $D$.

    Then, $M$ has at least $n(1-\theta-\alpha)>0$ columns of depth less than $D$, which means depth at most $D/2$. So, these columns have robust image of size at least $2|G|/D$, and all $n+u$ columns have robust image of size at least $|G|/D$. By Lemma \ref{lemmaprobsimple}, specifically (\ref{eqprobsimpleleft}), we have \[P \leq K \left(\frac{D}{|G|}\right)^{n(\theta+\alpha)}\left(\frac{D}{2|G|}\right)^{n(1-\theta-\alpha)+u} \leq K |G|^{-n}D^{n}2^{-n(1-\theta-\alpha)}.\] So we can multiply the bound on $P$ from (\ref{eqP}) by $2^{-n(1-\theta-\alpha)}$. Taking $\gamma = (1-\theta-\alpha) \log 2 > 0$ suffices, as we have multiplied the bound on $NP$ from (\ref{eqmain}) by $e^{-\gamma n}$.

    \textbf{Case 2:} There exists $k \in [s]$ such that $[G:H_k] = D$ and $n_k \geq \theta n$. 

    Let $F \in \Sur_{\{n_k,\},r}(V,G)$, so $F$ is a surjection. As $\{u_1,\ldots,u_n\}$ is a basis for $V$, there is some $i \in [n]$ such that $F(u_i) \not \in H_k$. By our assumptions in Theorem \ref{thmmoments}, there exist at most $\beta n$ columns $M_j$ such that $u_i \not \in W_j$. By the assumption that $n_k \geq \theta n$, there are at least $n(\theta - \beta)>0$ columns $M_j$ such that $u_i \in W_j$ and $F$ has robust image $H_k$ for $M_j$. If $u_i \in W_j = A_j^{-1}V_{\setminus \tau_j}$ and $F$ has robust image $H_k$ for $M_j$, Definition \ref{defrobustimage} gives that $F$ has proper robust image for $M_j$. So $F$ has proper robust image for at least $n(\theta - \beta)$ columns $M_j$. Therefore, $r \geq n(\theta - \beta)$. Then, Lemma \ref{lemmaprobsimple} allows us to multiply the bound on $P$ from (\ref{eqP}) by $(1-\varepsilon)^{n(\theta - \beta)}$. Taking $\gamma = -(\theta-\beta) \log(1-\varepsilon) >0$ suffices, as we have multiplied the bound on $NP$ from (\ref{eqmain}) by $e^{-\gamma n}$.
    
    \textbf{Case 3:} Case 1 and Case 2 do not hold. In this case, $F$ has depth $D$ for more than $(\theta + \alpha)n$ columns of $M$ and $F$ has multiple robust images of depth $D$ for different columns of $M$. 

    By the assumptions of Theorem \ref{thmmoments}, \[\#\{(i,j) \in [n] \times [n+u]: u_i \not \in W_j\} = \sum_{i=1}^n \#\{j \in [n+u]: u_i \not \in W_j\}\leq n(\beta n).\] Taking the remaining pairs $(i,j) \in [n] \times [n+u]$, we get \[\#\{(i,j) \in [n] \times [n+u]: u_i \in W_j\} \geq n(n+u)-\beta n^2.\] By permuting columns, using the assumptions of Case 3, let $F$ have depth $D$ for $M_j$ for all $j \in [\ceil{(\theta + \alpha)n}]$. Removing all the other $n+u-\ceil{(\theta + \alpha)n}$ columns, which each have $n$ entries, gives \[\#\{(i,j) \in [n] \times [\ceil{(\theta + \alpha)n}]: u_i \in W_j\} \geq n(n+u)-\beta n^2 - n(n+u-\ceil{(\theta + \alpha)n}) =n\ceil{(\theta + \alpha)n}-\beta n^2.\] By averaging over all $j \in  [\ceil{(\theta + \alpha)n}]$, there exists some $j_0 \in [\ceil{(\theta + \alpha)n}]$ where 
    \begin{equation}\label{eqj}
        \#\{i \in [n]: u_i \in W_{j_0}\} \geq \frac{n\ceil{(\theta + \alpha)n}-\beta n^2}{\ceil{(\theta + \alpha)n}} 
        \geq n - \frac{\beta n}{\theta + \alpha}.
    \end{equation}
    This replaces our choice of $j_0$ in the proof of Lemma \ref{mainlemma}, as in that lemma we chose $j_0$ to be arbitrary such that $F$ has depth $D$ for $M_{j_0}$. Let $F$ have robust image $H_0$ for $M_{j_0}$ with $[G:H_0] = D$. By the assumption of Case 3, we can choose $j \in [n+u]$ such that $F$ has robust image $H$ for $M_{j}$ and $[G:H] = D$ and $H_0 \neq H$. By the assumptions of Theorem \ref{thmmoments}, \begin{equation}\label{eqj'}
        \#\{i \in [n]: v_i^j \in W_{j}\} \geq (1-\alpha)n.
    \end{equation} 
    For notational convenience, we permute the columns of $M$ so that $j_0 = 1$ and $j=2$. This means that $W_{j_0}=V_0$ becomes $W_1$ and $W_j$ becomes $W_2$. We also take $G_1 \coloneq H_0$ and $G_2 \coloneq H$, so that $F$ has robust image $G_k$ for $M_k$ for $k \in \{1,2\}$.
    
    Adding the inequalities $\alpha(1-\theta - \alpha) >0$ and $\theta > \beta$ gives $(1-\alpha)(\theta + \alpha) > \beta$. Hence, $1-\alpha > \frac{\beta}{\theta+\alpha}$. Let $\phi \coloneq (1-\alpha) - \frac{\beta}{\theta+\alpha} > 0$. Then, by using that $\{u_1,\ldots,u_n\}$ and $\{v_1^2,\ldots,v_n^2\}$ are bases for $V$ and adding the dimension bounds which result from (\ref{eqj}) and (\ref{eqj'}), \begin{equation*}\label{eqsumming}
        \dim W_1 + \dim W_2 \geq n - \frac{\beta n}{\theta + \alpha}+(1-\alpha)n \geq n + \phi n. 
    \end{equation*}
    Let $W \coloneq W_1 \cap W_2$. As $\ol{W_1}$ and $\ol{W_2}$ are subspaces of $\F_p^n$ with $\dim_{\F_p}\ol{W_1} + \dim_{\F_p}\ol{W_2} \geq n + \phi n$, then \[\dim W = \dim (W_1 \cap W_2) = \dim_{\F_p}(\ol{W_1}\cap\ol{W_2}) \geq \phi n.\]
    
    Let $H' \coloneq G_1\cap G_2$. Since $G_1 \neq G_2$ have the same size, then $|H'| \leq \frac{|G_1|}{2} = \frac{|G|}{2D}$. We will now find a submodule $W' \subseteq W$ such that $F(W') \subseteq H'$. Specifically, let $k \in \{1,2\}$ and let $\sigma_k \subseteq [n]$ be defined as in Definition \ref{defrobustimage} so that $|\sigma_k| \leq |G|\log(n)^2$ and $F(A_k^{-1}(V_{\setminus \tau_k \cup\sigma_k})) \subseteq G_k$. Then, $F(W') \subseteq H'$ where we take \begin{align*}W' &\coloneq W \cap A_1^{-1}(V_{\setminus \tau_1 \cup\sigma_1}) \cap A_2^{-1}(V_{\setminus \tau_2 \cup \sigma_2})\\ &= W \cap W_{1} \cap A_1^{-1}(V_{\setminus \sigma_1}) \cap W_2  \cap A_2^{-1}(V_{\setminus \sigma_2}) \\&= W \cap A_1^{-1}(V_{\setminus \sigma_1}) \cap A_2^{-1}(V_{\setminus \sigma_2}).\end{align*}
    We can reduce this mod $p$ and use that ${\dim_{\F_p} \Big(\ol{A_1^{-1}(V_{\setminus \sigma_1}) \cap A_2^{-1}(V_{\setminus \sigma_2})}\Big) \geq n-2\floor{|G|\log(n)^{\lambda}}}$ to see that \begin{equation}\label{dimWprime2}
        \dim W' =\dim_{\F_p} \big(\ol{W'}\big) \geq \dim_{\F_p} \big(\ol{W}\big) - 2\floor{|G|\log(n)^{\lambda}} \geq \dim W- 2\floor{|G|\log(n)^{\lambda}}.
    \end{equation}

After $W'$ is fixed, choose $\{w_1,\ldots,w_{\dim W'}\} \subseteq W'$ deterministically so that $\{\ol{w_1},\ldots,\ol{w_{\dim W'}}\}$ is a basis for $\ol{W'}$. This implies $\langle w_1,\ldots,w_{\dim W'}\rangle$ is a summand. As $W_1$ is a summand, extend this deterministically to a basis $\{w_1,\ldots,w_{\dim W_1}\}$ for $W_1$. Define another summand $U = \langle w_{\dim W' +1},\ldots,w_{\dim W_1}\rangle$. Then $\langle w_1,\ldots,w_{\dim W'}\rangle+U = W_1$ and $\dim W' + \dim U = \dim W_1$.

     There are at most $(n+u)^2$ choices of $j_0$ and $j$, which determine $W_1$ and $W_2$ and $W$. There are at most $s^2$ choices of $G_1$ and $G_2$, which determine $H'$. There are at most $\binom{n}{\floor{|G|\log(n)^2}}^2 \leq n^{2|G|\log(n)^2}$ possibilities for $\sigma_1$ and $\sigma_2$, and hence $W'$. As $F(W') \subseteq H'$, there are at most $|H'| \leq |G|/2D$ possibilities for $F(w_i)$ for $1\leq i \leq \dim W'$. As $U \subseteq W_1$ is a summand, Lemma \ref{lemmarobustimage} gives that \[\#\{\text{possibilities for }F|_{U}\} \leq (n|G|)^{|G|\log(n)^2} \left(\frac{|G|}{D}\right)^{\dim U}.\] Multiplying all of these bounds out, using that $F|_{W_1}$ is determined by $F(w_1),\ldots,F(w_{\dim W'})$ and $F|_U$, and using (\ref{dimWprime2}) gives \begin{align*}\#\{\text{possibilities for }W_1\text{ and }F|_{W_1}\} &\leq (n+u)^2s^2(n^3|G|)^{|G|\log(n)^2} \left(\frac{|G|}{2D}\right)^{\dim W'} \left(\frac{|G|}{D}\right)^{\dim U} \\
     &\leq (n+u)^2s^2(n^3|G|)^{|G|\log(n)^2} \left(\frac{|G|}{D}\right)^{\dim W_1} \left(\frac{1}{2}\right)^{\phi n - 2|G|\log(n)^2}.\end{align*}
    As $W_1 = V_0$, this is the same as the bound in \eqref{eqV0count}, except for a factor of \begin{equation*}
        (n+u)s \big(2n\big)^{2|G|\log(n)^2} 2^{-\phi n} \leq 2^{-\phi n/2}, 
    \end{equation*}
    where this inequality holds for $n$ sufficiently large. Repeating the logic of Lemma~\ref{mainlemma}, we get the exact same bound as we achieved there, multiplied by $2^{-\phi n/2}$. With $\gamma = \phi\log(2)/2 > 0$, we have thus multiplied our bound on $N$ from (\ref{eqN}) by $e^{-\gamma n}$, and therefore also our bound on $NP$ from (\ref{eqmain}).
\end{proof}

\section{Proof of Proposition \ref{mainprop}({\normalfont b})}\label{sectmainpropband}

\subsection{Overview}
In this section, we will prove Proposition \ref{mainprop}(b). Throughout this section, we freely assume the conventions of Definition \ref{defstratify} and assume $M$ satisfies the assumptions of Theorem \ref{thmmomentsband}, as shown in Figure~\ref{fig3}(f). In particular, let $u=0$, let $\delta >0$ be a real number, and let $\lambda = 1 + \delta/2$. Take $w_n \coloneq \floor{\log(n)^{1+\delta}}$, and for all $j \in [n]$ let $M_j$ be $(\varepsilon,\tau_j,I_n)$-balanced with \begin{equation}\label{eqtj}
    \tau_j \coloneq \{i \in [n] : |i-j| > w_n\}.
\end{equation} We let $\{v_i\} = \{e_i\}$ be the standard basis for $V = (\Z/a\Z)^n$, as we only need to consider one basis for $V$ in this section. All submodules in this section will be generated by subsets of $\{v_1,\ldots,v_n\}$, so they are automatically summands. In our use of the definitions and lemmas from Section \ref{sectcolumns}, we will always take $A = I_n$.

Like in Section \ref{sectmainprop}, we will prove Proposition \ref{mainprop}(b) by bounding $\P(FM=0)$ and $\#\Sur_{\{n_k\},r}(V,G)$. We again use Lemma \ref{lemmaprobsimple} as our bound on $\P(FM=0)$.
The main technical challenge is to bound $\#\Sur_{\{n_k\},r}(V,G)$. Our analysis delicately depends on $r$, the number of columns where $F$ has proper robust image. Columns where $F$ has proper robust image will contribute a $(1-\varepsilon)$ factor to $\P(FM=0)$, but they will increase $\#\Sur_{\{n_k\},r}(V,G)$ because of the error terms in Lemma \ref{lemmarobustimage}. With careful analysis, we are able to prove the following bound on $\#\Sur_{\{n_k\},r}(V,G)$, which grows slowly enough in $r$ to prove Proposition~\ref{mainprop}(b).

\begin{lemma}\label{lemmabandcount}
    Let $M$ be as in Theorem \ref{thmmomentsband}. There exist constants $K,C >0$ which are independent of $n$ and $\{n_k\}$ and $r$ such that \[\#\Sur_{\{n_k\},r}(V,G) \leq K |H_1|^{n_1}|H_2|^{n_2} \cdots |H_s|^{n_s} \exp\left({\frac{Cr}{\log(n)^{\delta/4}}}\right).\]
\end{lemma}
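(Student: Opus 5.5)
The plan is to count each $F \in \Sur_{\{n_k\},r}(V,G)$ coordinate by coordinate in the standard basis. I will charge each coordinate $i$ to the column $M_i$ (recall $u=0$), so that $F(e_i)$ costs roughly $|H^{(i)}|$ choices, where $H^{(j)}$ denotes the robust image of $F$ for $M_j$. Since exactly $n_k$ columns have robust image $H_k$, the product of these costs is $|H_1|^{n_1}\cdots|H_s|^{n_s}$. What remains is to show that everything else costs only $\exp(Cr/\log(n)^{\delta/4})$. That includes the data needed to describe the profile $j \mapsto H^{(j)}$ and the coordinates where $F(e_i)\notin H^{(i)}$.

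\emph{Non-proper columns.} By Definition~\ref{defrobustimage} with $A=I_n$, a column $j$ that is not proper has $H^{(j)} = F(V_{\setminus\tau_j})$, and $e_j \in V_{\setminus \tau_j}$ by \eqref{eqtj}. So $F(e_i) \in H^{(i)}$ holds exactly whenever column $i$ is not proper, and no exceptional set is involved.

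\emph{Proper columns.} If column $j$ is proper, there is $\sigma_j$ with $|\sigma_j| < |G|\log(n)^{\lambda}$ and $F(e_i)\in H^{(j)}$ for all $i$ in the window $|i-j|\le w_n$ with $i\notin\sigma_j$. The window has size about $2\log(n)^{1+\delta}$, so $\sigma_j$ occupies only a fraction about $\log(n)^{\lambda-1-\delta} = \log(n)^{-\delta/2}$ of it. Instead of charging $i$ to column $i$ itself, I will charge $i$ to any nearby column $j$ with $H^{(j)} = H^{(i)}$ and $i\notin\sigma_j$. An averaging argument should show such a $j$ exists except on a set of coordinates of size at most about $r\log(n)^{-\delta/2}$. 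Specifying these exceptional coordinates and their values costs at most $\binom{n}{\le r\log(n)^{-\delta/2}}|G|^{r\log(n)^{-\delta/2}}$, which I expect to fit within $\exp(Cr\log(n)^{-\delta/4})$ after absorbing the logarithmic factors.

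\emph{Local constancy.} Next I must control the cost of encoding the profile $j\mapsto H^{(j)}$, which is not determined by $\{n_k\}$ alone. I will first prove a local-constancy claim by the argument of Remark~\ref{robustimageunique}. If $|j-j'| < \log(n)^{\lambda}$, then $W_j\setminus W_{j'}$ has at most $|j-j'|$ elements. Removing $\sigma_j\cup\sigma_{j'}\cup(W_j\setminus W_{j'})$ from $V_{\setminus\tau_j}$ then maps into $H^{(j)}\cap H^{(j')}$. By the maximality in Definition~\ref{defdepth}, this forces $H^{(j)}=H^{(j')}$ unless the removed set is too large relative to the depths. So the profile should be constant on stretches, and change points can occur only under specific local configurations.

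\emph{Charging change points, the main obstacle.} I expect the hard step to be charging every change point of the profile, and every exceptional coordinate, to proper columns. Between two non-proper columns $j<j'$ with $H^{(j)}=F(W_j)\ne F(W_{j'})=H^{(j')}$, some coordinate must enter or leave the image. I will try to show this forces the columns near the transition to have proper robust image, since their $\sigma$'s must absorb the coordinates witnessing the change. Then each change point is paid for by a $\log(n)^{1+\delta}$-neighbourhood containing proper columns. To ensure the neighbourhoods charged do not overlap too much, I would try the covering lemma on the path graph $P_n$ (Lemma~\ref{lemma7}): select disjoint balls of radius about $w_n$ centred at proper columns that cover all change points up to a constant-factor enlargement.

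\emph{Assembling the bound.} The number of change points should then be $O(r/\log(n)^{1+\delta-\lambda})$. Encoding their positions and new subgroups costs $\exp(O(r\log\log n/\log(n)^{\delta/2}))$, which is at most $\exp(Cr/\log(n)^{\delta/4})$ for large $n$. Combining this with the product $\prod_k|H_k|^{n_k}$ and the exceptional-coordinate cost, and choosing $K$ large to cover small $n$, gives the lemma.
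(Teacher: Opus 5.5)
Your proposal has a genuine gap. It sits at the step you flag as the main obstacle, and it cannot be fixed inside your framework.

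\textbf{Why the pointwise charging fails.} Charging coordinate $i$ to column $i$ (or to a nearby column with the same robust image) requires knowing the whole profile $j\mapsto H^{(j)}$. On proper columns this profile is not locally constant. The argument of Remark~\ref{robustimageunique} can only force $H^{(j)}=H^{(j')}$ when the two robust images have the same index. When the depths differ, there is nothing to contradict.

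Here is a concrete family. Let $G=\Z/p\Z$, $L=\log(n)^{\lambda}$ and $c=\lceil pL\rceil$. Let $F(e_i)\in\{0,1\}$ have its nonzero entries arranged periodically with period $2w_n+1$, with $c$ of them per period. Occasionally, place one nonzero entry one step to the right of where periodicity would put it, and move it back one period later. Then $\#\{i\in B(j,w_n):F(e_i)\neq 0\}$ equals $c$ except at isolated ``dip'' columns, where it is $c-1$, and harmless ``bump'' columns, where it is $c+1$.

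In this family:
\begin{itemize}
\item each dip column is proper with robust image $0$;
\item every other column away from the boundary is a code with robust image $G$;
\item the dips can be placed independently in $N\asymp nL/w_n$ slots.
\end{itemize}
Taking about $n/w_n$ dips, a single stratum $\Sur_{\{n_k\},r}(V,G)$ contains $F$'s with $\exp(\Theta(r\log\log n))$ different profiles. Each profile has about $2r$ change points, not $O(r/\log(n)^{\delta/2})$. So any argument that fixes the profile and then charges $|H^{(i)}|$ per coordinate loses a factor far larger than $\exp(Cr/\log(n)^{\delta/4})$. The factor $(1-\varepsilon)^r$ in Proposition~\ref{mainprop}(b) cannot absorb this loss.

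Your exceptional-coordinate bound fails as well. Suppose $i$ is proper and $F(e_i)\notin H^{(i)}$. Since $F(V_{\setminus\sigma_j\cup\tau_j})=H^{(j)}$, we get $i\in\sigma_j$ for every $j$ with $|i-j|\le w_n$ and $H^{(j)}=H^{(i)}$. So re-charging never helps such $i$. Placing an unshifted nonzero entry $w_n+1$ to the right of each shifted one puts a nonzero entry at every dip column, and then the exceptional set has size about $r$.

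\textbf{The missing idea.} The paper avoids any coordinate-to-column matching and never records the robust images of proper columns individually.
\begin{itemize}
\item It fixes only the non-robust images, through the sets $T_k=\{j:F(V_{\setminus\tau_j})\subseteq H_k\}$. Lemma~\ref{lemmaregions} shows each $T_k$ is a union of at most $4r/\log(n)^{\lambda}$ intervals, because each interval endpoint is followed by at least $\log(n)^{\lambda}/2$ proper columns. This is the correct form of your ``change points are paid for by proper columns,'' and it holds only for non-robust images.
\item It then argues cumulatively. Let $R_k$ be the columns with proper robust image $H_k$, and order subgroups by size. Lemma~\ref{lemma7} gives $Q_k\subseteq R_1\cup\cdots\cup R_k$ with $|Q_k|=O(r/w_n)$, up to one extra column at the first step. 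With $S_k=S_{k-1}\cup T_k\cup Q_k$, it guarantees $|B(S_k)|\ge\min\{n_\kappa+\cdots+n_k+w_n/2,\,n\}$.
\item Each newly covered coordinate has at most $|H_k|$ choices. For windows of columns in $T_k$ this is direct. For columns of $Q_k$ it comes from Lemma~\ref{lemmarobustimage}, at a cost of $\exp(\log(n)^{1+3\delta/4})$ per column.
\item A telescoping argument using $|H_k|\le|H_{k+1}|$ turns these cumulative lower bounds into $\prod_k|H_k|^{n_k}$.
\end{itemize}
Only $O(rs/w_n)$ proper columns are ever named, so the total error is $\exp(O(r/\log(n)^{\delta/4}))$. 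The extra $w_n/2$, together with ordering equal-size subgroups by decreasing $n_k$, absorbs the one additional column needed when $T_\kappa=\emptyset$ and $r<w_n$. In particular, Lemma~\ref{lemma7} is used to make a few balls cover many coordinates, not to cover change points.
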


To prove Lemma \ref{lemmabandcount}, we again use Lemma \ref{lemmarobustimage} to bound $\#\{\text{possibilities for $F|_W$}\}$ for $W \subseteq V_{\setminus \tau_j}$. However, in this case, $\dim V_{\setminus \tau_j} \leq 2w_n + 1 \in o(n)$. So, to bound the number of possibilities for $F=F|_V$, we will need to use many more columns $M_j$ in our analysis. For each column $M_j$ we use where $F$ has proper robust image, then Lemma \ref{lemmarobustimage} will give us an error term bounded by $\exp(\log(n)^{1+3\delta/4})$. To make sure that our overall error term is at most $\exp\left({\frac{Cr}{\log(n)^{\delta/4}}}\right)$, we must ensure that we only use $O(r/w_n)$ columns where $F$ has proper robust image. To do this, we study the geometry and structure of the band matrix $M$. 

We begin by proving a peculiar graph theory lemma, Lemma \ref{lemma7}, about the path graph $P_n$. The path graph is useful because the entries in $[n] \setminus \tau_j$ as in (\ref{eqtj}) can be viewed as a ball in $P_n$. Lemma \ref{lemma7} will imply that we only need to use Lemma \ref{lemmarobustimage} on $O(r/w_n)$ columns where $F$ has proper robust image. After this, we study the subgroup $F(V_{\setminus \tau_j}) \leq G$, which should be thought of as the (non-robust) image of $F$ for the column $M_j$. We prove Lemma \ref{lemmaregions}, which implies that there are at most $\exp\left(\frac{8rs}{\log(n)^{\delta/2}}\right)$ possibilities for $F(V_{\setminus \tau_j})$ for all $j \in [n]$. This is done by showing that columns where $F$ has certain images will appear in intervals inside $[n]$, and there cannot be too many intervals. So, there is a strong regularity to where the columns for which $F$ has each nonproper robust image may appear. 

Combining Lemma \ref{lemma7} and Lemma \ref{lemmaregions} allows us to prove Lemma \ref{lemmabandcount} by bounding the number of choices for $F(v_i)$ for all $i \in [n]$. To produce these bounds, we use all of the columns where $F$ has nonproper robust image, and apply Lemma \ref{lemmarobustimage} to $O(r/w_n)$ of the columns where $F$ has proper robust image. 

\subsection{A graph theoretic interlude}\label{sectgeometry}
We now study the indices in $[n]$ by viewing them as vertices of the path graph $P_n$, which has an edge between $i$ and $j$ when $|i-j|= 1$. Then, $|i-j|$ is the shortest-path distance from $i$ to $j$. We can reinterpret (\ref{eqtj}) in terms of balls as \begin{equation}\label{eqball}
    [n] \setminus \tau_j = B(j,w_n) \coloneq \{i \in [n]:|i-j| \leq w_n\}.
\end{equation} For $S \subseteq [n]$, define \begin{equation}\label{eqdefphi}B(S) \coloneq\bigcup_{j \in S} B(j,w_n)= \bigcup_{j \in S} ([n] \setminus \tau_j).\end{equation} If $S$ is a set of indices $j$ of columns of $M$, then $B(S)$ is the set of row indices $i \in [n]$ for which $v_i \in V_{\setminus \tau_j}$ for some $j \in S$. The following curious identity about the path graph will be central to the proof of Lemma~\ref{lemmabandcount}.

\begin{lemma}\label{lemma7}
    Let $R, S \subseteq [n]$, not both empty. Then there exists $Q \subseteq R$ such that \begin{equation}{\label{eqgraphtheory}}
        |B(Q \cup S)| \geq \min\{|R \cup S| + w_n/2,n\}\text{ and }|Q| \leq \begin{cases} 
        \frac{40|R|}{w_n} &\text{ if } S \neq \emptyset \\
            \max \left\{\frac{40|R|}{w_n},1\right\}&\text{ if } S = \emptyset.
        \end{cases} 
    \end{equation}
\end{lemma}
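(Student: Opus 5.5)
The plan is a greedy selection of well-separated points of $R$, together with a charging argument carried out interval by interval along $P_n$. The basic geometric fact I will use repeatedly is the following. Every ball $B(j,w_n)$ contains at least $\min\{w_n+1,n\}$ vertices. Moreover, if $T\subseteq[n]$ is nonempty and $B(T)\neq[n]$, then pick $z\notin B(T)$ and let $x\in T$ be a point of $T$ nearest to $z$. The $w_n$ vertices strictly between $x$ and $z$ adjacent to $x$ lie in $B(T)$, and they contain no point of $T$, since such a point would be nearer to $z$. Hence
\[|B(T)|\ \ge\ \min\{|T|+w_n,\ n\}.\]
More precisely, the same argument applied to a single maximal interval (component) $C$ of $B(T)$ with $C\neq[n]$ gives $|C|\ge |C\cap T|+w_n$. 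I will also use the stronger statement that the $w_n$ extra vertices can be taken to avoid any prescribed set $X$ whose points are all at distance $>w_n$ from $T$ on that side.

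\textbf{Small $R$.} First dispose of the case $|R|\le w_n/40$. If $S\neq\emptyset$, take $Q=\emptyset$. By the fact above with $T=S$, we get $|B(S)|\ge\min\{|S|+w_n,n\}\ge \min\{|R\cup S|+w_n/2,n\}$, since $|R\cup S|\le |S|+w_n/40$. If $S=\emptyset$ (so $R\neq\emptyset$), take $Q=\{r\}$ for any single $r\in R$, which is allowed by the $\max\{\cdot,1\}$ in \eqref{eqgraphtheory}. Then $|B(Q)|\ge\min\{w_n+1,n\}\ge\min\{|R|+w_n/2,n\}$.

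\textbf{General $R$.} Here I choose $Q$ greedily from left to right. Scan $R\setminus B(S)$ in increasing order. Whenever the current point $r$ is not yet in $B(Q\cup S)$, look at the window $[r,r+w_n]$. Add to $Q$ the point of $R$ in this window farthest to the right, but only if the window contains at least $w_n/40$ points of $R$; otherwise mark $r$ as \emph{sparse} and move on. Each added point is charged to at least $w_n/40$ points of $R$ in its own window, and these windows are disjoint, because the next scanned point lies outside $B$ of the previous choice. This gives $|Q|\le 40|R|/w_n$. The points of $R$ that are covered by $B(Q\cup S)$ are counted directly in $|B(Q\cup S)|$. What remains is to account for the uncovered sparse points.

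\textbf{Charging the sparse points (main obstacle).} The key step is to show that the sparse points can be paid for by the surplus of $B(Q\cup S)$ over $R\cup S$. I would organize this component by component. Let $C$ be a maximal interval of $B(Q\cup S)$, and let $g$ be the gap immediately to its right. The sparse points lying in $g$ before the next component form, by construction, a set in which every window of length $w_n$ contains fewer than $w_n/40$ points of $R$. The component $C$ contributes a surplus of at least $w_n$ vertices not in $R\cup S$, namely its right end, by the fact above. Splitting the gap into windows of length $w_n$, each window contributes fewer than $w_n/40$ sparse points, while being at distance $>w_n$ from $Q\cup S$.

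The delicate point is that the gap may be much longer than $w_n$, with many sparse windows and no component to pay for them. To handle this, I would modify the greedy rule so that a long run of sparse windows is not allowed. Concretely, if a window is sparse but the next $20$ windows together contain at least $w_n/40$ points of $R$, I would still add a point, charging it to the whole run of length $\le 20w_n$. This keeps the constant $40$ while ensuring that each run of sparse points has total size at most about $w_n/2$ before a new ball of $B(Q\cup S)$ begins. In the end every component except possibly the last one pays its $w_n$ surplus against at most $w_n/2$ sparse points. The final component (or the boundary of $[n]$, when $B=[n]$ is attained) yields the required extra $w_n/2$, giving $|B(Q\cup S)|\ge\min\{|R\cup S|+w_n/2,n\}$.

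Getting the constants in this run-length bookkeeping to close is the step I expect to need the most care. If it does not close as stated, I would fall back to partitioning $[n]$ into blocks of length $\lceil w_n/4\rceil$ and selecting one point of $R$ from each block holding at least $w_n/40$ points of $R$, then redoing the surplus count block by block.
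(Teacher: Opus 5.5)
\textbf{There is a genuine gap in the general case.} Your basic surplus fact and your small-$R$ case are correct. The problem is the selection rule itself, not the constants: a ball is added only where $R$ is dense, so a spread-out $R$ is never paid for.

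Take $S=\{n\}$, and let $R$ be the points of $[1,n-2w_n]$ congruent to $1$ modulo $2w_n+1$. Then every window $[r,r+w_n]$ contains one point of $R$, twenty consecutive windows contain about ten, and a block of length $\lceil w_n/4\rceil$ contains at most one. So for large $n$ your greedy, its twenty-window modification, and the block fallback all return $Q=\emptyset$. This gives $|B(Q\cup S)|=w_n+1$, while $|R\cup S|+w_n/2\approx n/(2w_n)$. A valid $Q$ does exist here: roughly $|R|/(2w_n)$ of the pairwise disjoint balls around points of $R$ suffice.

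There is also a second, independent error. You assert that each component $C$ of $B(Q\cup S)$ has $w_n$ vertices outside $R\cup S$ at its right end, but your fact only bounds $|C|-|C\cap(Q\cup S)|$. The right half of a chosen ball can be filled entirely by unselected points of $R$. For example, take $S=\emptyset$ and $R=[1,L]$, with $L$ chosen so that the last scanned window is sparse. Your greedy then covers exactly $[1,r-1]\subseteq R$, a component with zero surplus, so $|B(Q)|<|R|$.

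\textbf{The missing idea} is to aim at the size target rather than at covering the points of $R$. Your own fact applied to $T=R\cup S$ gives $|B(R\cup S)|\ge\min\{|R\cup S|+w_n,n\}$. So it suffices to cover, with balls centered in $R$, an initial stretch of $B(R\cup S)\setminus B(S)$ of total length $\min\{|R\cup S|+w_n/2,n\}-|B(S)|\le |R|+w_n/2$. Balls should be charged to covered \emph{length}, not to the density of $R$.

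This is the paper's route:
\begin{enumerate}
\item If $|R|\le w_n\,m(B(S))/2$, then $Q=\emptyset$ already works by Lemma~\ref{lemma5}. If $S=\emptyset$ and $|R|\le w_n/2$, a single ball works.
\item Otherwise $|R|>w_n/2$. The set $B(R\cup S)\setminus B(S)$ consists of at most $2m(B(S))$ pieces adjacent to $B(S)$, plus new components of length at least $w_n+1$. Hence the needed stretch meets at most $5|R|/w_n+2$ intervals.
\item The greedy interval cover of Lemma~\ref{lemma8} uses at most $\#[a,b]/w_n+4$ balls for each $[a,b]\subseteq B(R)$. Summing gives $|Q|\le 21|R|/w_n+9\le 40|R|/w_n$.
\end{enumerate}
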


We have not attempted to optimize the constant 40. When $S \neq \emptyset$, the bound $|B(Q \cup S)| \geq |R \cup S|$ would be sufficient to prove Lemma \ref{lemmabandcount}. The stronger bound on $|B(Q \cup S)|$ becomes necessary when $S = \emptyset$ and $|R| \in o(w_n)$. The remainder of Section \ref{sectgeometry} will be devoted to proving Lemma \ref{lemma7}. 

For $a,b \in \Z$, define the interval \[[a,b] \coloneq 
        \{i \in [n]: a \leq i \leq b\}.\] For $j \in [n]$, it is easy to see that $B(j,w_n) = [j-w_n,j+w_n]$, so $w_n+1 \leq |B(j,w_n)| \leq 2w_n+1$.
    
    For $S \subseteq [n]$, let $m(S)$ be the minimal $m \in \Z_{\geq 0}$ such that we can write $S = [a_1,b_1]\cup \cdots \cup [a_m,b_m]$. When we write $S \neq [n]$ as a union of $m(S)$ intervals, this implies that all intervals are disjoint and $a_\ell-1 \not \in S$ and $b_\ell+1 \not \in S$ for all $1 \leq \ell \leq m$. We also assume without loss of generality that $b_\ell < a_{\ell+1}$ for all $1\leq \ell \leq m-1$. We now prove a lemma which relates $|B(S)|$ to $|S|$ and $m(B(S))$.

    \begin{lemma}\label{lemma5}
    If $S \subseteq [n]$ and $B(S) \subsetneq [n]$, then $|B(S)| \geq |S| + w_n m(B(S))$.
\end{lemma}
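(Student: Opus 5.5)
Write $B(S) = [a_1,b_1]\cup\cdots\cup[a_m,b_m]$ with $m = m(B(S))$, the intervals disjoint and ordered, and $a_\ell - 1\notin B(S)$, $b_\ell+1 \notin B(S)$. The plan is to prove the inequality interval by interval: show that for each $\ell$,
\[
|[a_\ell,b_\ell]| \geq |S\cap[a_\ell,b_\ell]| + w_n .
\]
Summing over $\ell$ then gives $|B(S)| \ge |S| + w_n m$. For this to be valid I first check that $S \subseteq B(S)$, which holds since $j\in B(j,w_n)$. Hence the sets $S\cap[a_\ell,b_\ell]$ partition $S$.

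Fix an interval $I=[a,b]=[a_\ell,b_\ell]$ and let $S_I = S\cap I$. This set is nonempty, since each point of $B(S)$ lies in some ball $B(j,w_n)$, and that ball is an interval contained in $B(S)$, so it lies inside the component containing the point; in particular $j\in I$. Let $j_{\min}$ and $j_{\max}$ be the smallest and largest elements of $S_I$. Because $B(S)\subsetneq[n]$, the interval $I$ is not all of $[n]$, so at least one of $a>1$ or $b<n$ holds.

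Suppose $a>1$. Then $a-1\notin B(S)$, so $B(j_{\min},w_n)$ does not reach $a-1$, which forces $j_{\min}-w_n \ge a$. Hence $[a, j_{\min}-1]$ consists of at least $w_n$ elements of $I$ not in $S$. Similarly, if $b<n$ then $[j_{\max}+1,b]$ contains at least $w_n$ elements of $I\setminus S$. In either case $|I\setminus S_I|\ge w_n$, giving the per-interval bound. (When both ends are interior we actually get $2w_n$, but $w_n$ suffices.)

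The only delicate point is the boundary case: an interval touching $1$ or $n$ gets its gap only from the other side, which is exactly why the hypothesis $B(S)\ne[n]$ is needed and why only one $w_n$ per interval is claimed. I expect no real obstacle beyond verifying that every ball centered in $S_I$ stays inside $I$, so that the extreme centers determine the gaps.
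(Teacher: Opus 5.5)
Your proof is correct and takes essentially the same approach as the paper. The paper shrinks each component $[a_\ell,b_\ell]$ by $w_n$ at every endpoint other than $1$ or $n$ and notes that $S$ lies in the union of the shrunken intervals, which is your observation that $S\cap[a_\ell,b_\ell]$ misses a run of $w_n$ points next to an interior endpoint. Your explicit check that each $S\cap[a_\ell,b_\ell]$ is nonempty is a detail the paper leaves implicit in its bound $\#[a_\ell',b_\ell']\le \#[a_\ell,b_\ell]-w_n$: that bound needs the component to have length at least $w_n$.
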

\begin{proof}
Let $B(S)$ be split into intervals \[B(S) = [a_1,b_1] \cup [a_2,b_2] \cup \cdots \cup [a_m,b_m]\subsetneq [n]\] with $m = m(B(S))$, so that $a_\ell - 1 \not \in B(S)$ and  $b_\ell + 1 \not \in B(S)$ for all $\ell \in [m]$. Define \[a_{\ell}' \coloneq \begin{cases}
    a_\ell + w_n &\text{ if $a_\ell > 1$}\\
    1 &\text{ if $a_\ell \leq 1$}
\end{cases} \text{ and }b_{\ell}' \coloneq \begin{cases}
    b_\ell - w_n &\text{ if $b_\ell < n$}\\
    n &\text{ if $b_\ell \geq n$}.
\end{cases}\]
Let $j \in S$, so $B(j,w_n) \subseteq B(S)$. As $B(j,w_n)$ is an interval, we must have $B(j,w_n) \subseteq [a_\ell,b_\ell]$ for some $\ell \in [m]$. By easy casework, this implies $j \in [a_\ell',b_\ell']$. So, we have \[S \subseteq [a_1',b_1'] \cup [a_2',b_2'] \cup \cdots \cup [a_m',b_m'].\] When $[a_\ell,b_\ell] \neq [n]$, then $\#[a_\ell',b_\ell'] \leq \#[a_\ell,b_\ell] - w_n$. As $B(S) \neq [n]$, this implies that $|S| \leq |B(S)| - w_n m$.
\end{proof}

We prove another lemma, which says that if $[a,b]$ is covered by a set of balls of the form $B(j,w_n)$, then it can be covered by a small number of these balls. This is proved using a greedy approach.

\begin{lemma}\label{lemma8}
    If $R \subseteq [n]$ and $[a,b] \subseteq B(R)$, then there exists $Q \subseteq R$ such that $[a,b] \subseteq B(Q)$ and \[|Q| \leq \frac{\#[a,b]}{w_n} +4.\]
\end{lemma}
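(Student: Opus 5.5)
We cover $[a,b]$ greedily from left to right, choosing at each step the ball reaching furthest to the right. Since every ball $B(j,w_n)=[j-w_n,j+w_n]$ is an interval, this is the classical interval-covering greedy algorithm. The analysis rests on one fact: the right endpoints of any two chosen balls whose indices differ by two must be more than $w_n$ apart. This gives roughly $2\#[a,b]/w_n$ balls, and a slightly finer argument removes the factor $2$.

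\textbf{Construction.} If $[a,b]$ is empty, take $Q=\emptyset$. Otherwise set $c_0\coloneq a$ and proceed inductively. Given the leftmost uncovered point $c_k\le b$, note that $c_k\in B(R)$, so some $j\in R$ has $c_k\in B(j,w_n)$. Among all such $j$, choose $q_{k+1}$ to be one maximizing $j$, i.e.\ maximizing the right endpoint $\min\{j+w_n,n\}$. Then set $c_{k+1}\coloneq q_{k+1}+w_n+1$ and stop once $c_{k+1}>b$. Every ball is an interval containing the current leftmost uncovered point, so after each step $[a,c_{k+1}-1]$ is covered. Hence the final $Q=\{q_1,\dots,q_t\}\subseteq R$ covers $[a,b]$.

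\textbf{Counting.} The key step is showing that $c_{k+2}-c_k>w_n$, or better. The point $c_{k+1}$ is not in $B(q_{k+1},w_n)$, and $q_{k+2}$ is a ball containing $c_{k+1}$. Moreover $q_{k+2}$ could not have been chosen at step $k+1$, since it has a larger right endpoint than $q_{k+1}$. Therefore $c_k\notin B(q_{k+2},w_n)$, and as $q_{k+2}\ge c_{k+1}-w_n>c_k$, this means $q_{k+2}-w_n>c_k$. Consequently
\[c_{k+2}=q_{k+2}+w_n+1>c_k+2w_n+1.\]
So every two steps advance the uncovered frontier by more than $2w_n$, i.e.\ about $w_n$ per step. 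All $c_k$ with $k<t$ lie in $[a,b]$, while $c_t$ may overshoot, and there are boundary effects near $1$ and $n$ where balls are truncated. Accounting for these, $t-1\le 2(\#[a,b]-1)/(2w_n)+2$, which gives $|Q|\le \#[a,b]/w_n+4$.

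\textbf{Main obstacle.} The only delicate part is the bookkeeping: handling the parity of $t$, the first ball (which may start far left of $a$), and truncation at the boundary of $[n]$, where $\min\{j+w_n,n\}$ replaces $j+w_n$. Once $c_{k+1}>n\ge b$ the process has already stopped, so truncation at $n$ causes no harm. The additive constant $4$ is generous enough to absorb these boundary effects.
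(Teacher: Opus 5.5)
Your greedy argument is essentially the paper's proof: the paper also takes each new center to be the largest element of $R\cap[j_\ell+1,j_\ell+2w_n+1]$, which is the largest $j\in R$ whose ball contains the first uncovered point, and uses that maximality to get $j_{\ell+2}-j_\ell\ge 2w_n$. The only difference is that the paper's first center $j_1$ is arbitrary, whereas your maximal first choice gives a slightly better additive constant.

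There is one small slip: the inequality $c_{k+1}-w_n>c_k$ fails whenever $q_{k+1}<c_k$. Your conclusion still holds, because $q_{k+2}>q_{k+1}\ge c_k-w_n$ together with $q_{k+2}\notin[c_k-w_n,c_k+w_n]$ (by maximality of $q_{k+1}$) forces $q_{k+2}>c_k+w_n$, and hence $c_{k+2}\ge c_k+2w_n+2$.
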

\begin{proof}
Without loss of generality, let $a,b \in [n]$. We will use that $x \in B(R)$ if and only if $B(x,w_n) \cap R \neq \emptyset$.

Choose $j_1$ in $R$ to be an element of $B(a,w_n) \cap R$, which exists since $a \in B(R)$. Then ${a \in B(j_1,w_n)}$. If $j_1 \geq b-w_n$, we are done by taking $Q = \{j_1\}$. Now, assume that $j_\ell \in [a-w_n,b-w_n-1]$ for some $\ell \geq 1$. Then, $j_\ell + w_n+1 \in [a,b]$. So, we can let $j_{\ell + 1}$ be the maximal element of \begin{equation}\label{eqBR}B(j_\ell + w_n+1,w_n) \cap R = [j_\ell+1,j_\ell+2w_n+1]\cap R.\end{equation} We repeat this process until $j_q \not \in [a-w_n,b-w_n-1]$ for $q \geq 1$, which will eventually happen because $j_{\ell} < j_{\ell+1}$ always. As $j_{q-1} \in [a-w_n,b-w_n-1]$ and ${j_q \in [j_{q-1}+1,j_{q-1}+2w_n+1]}$, then $j_q \in [b-w_n,b+w_n]$. So $b \in B(j_q, w_n)$. As ${j_{\ell + 1} \in [j_\ell+1,j_\ell+2w_n+1]}$ for all $1 \leq \ell < q$, then $B(\{j_\ell,j_{\ell+1}\})$ is an interval. This implies that $B(\{j_1,\ldots,j_q\})$ is an interval which contains $a$ and $b$, so ${[a,b] \subseteq B(\{j_1,\ldots,j_q\})}$. Let $Q = \{j_1,\ldots,j_q\}$ so $|Q| = q$ and $[a,b] \subseteq B(Q)$. 

For all $1\leq \ell \leq q-2$, the maximality of $j_{\ell + 1}$ in (\ref{eqBR}) implies that \[j_{\ell+2} \in [j_{\ell+1} +1,j_{\ell+1}+2w_n+1]\setminus [j_\ell +1,j_\ell+2w_n+1]  \subseteq [j_\ell+2w_n+2,j_\ell+4w_n+2].\] So $j_{\ell+2}-j_{\ell} \geq 2w_n$. So, for all $\ell \geq 1$, if $2\ell+1 \leq q$ then $j_q-j_1 \geq j_{2\ell+1}-j_{1} \geq 2w_n\ell$.

As $j_1 \geq a-w_n$ and $j_q \leq b+w_n$, then $j_q - j_1 \leq \#[a,b] + 2w_n$. So, if $2\ell+1 \leq q$ then $2w_n\ell \leq \#[a,b] + 2w_n$. Let $\ell$ be maximal such that $2\ell+1 \leq q$, which implies that $2\ell +2 \geq q$. Combining this with the previous equation, \[w_nq \leq w_n(2\ell +2) \leq \#[a,b] + 4w_n,\] which yields the lemma.
\end{proof}

We are now ready to prove Lemma \ref{lemma7}. We will apply Lemma \ref{lemma5} to assume that $B(R) \setminus B(S)$ does not have too many intervals, and Lemma \ref{lemma8} to find $Q \subseteq R$ such that $B(Q)$ contains sufficiently many of these intervals.

\begin{proof}[Proof of Lemma \ref{lemma7}]
    Clearly, $|R \cup S| \leq |R|+|S|$. If $B(S) = [n]$, we are done by taking $Q = \emptyset$. So, let $B(S)\subsetneq [n]$. 

    Let $m = m(B(S))$. If $S \neq \emptyset$, then $m \geq 1$. Assume that $|R| \leq w_n m/2$. By Lemma \ref{lemma5}, we have \[|B(S)| \geq |S| + w_nm \geq |S|+|R| +w_n/2 \geq |R \cup S| + w_n/2.\] So we can choose $Q = \emptyset$. If $S = \emptyset$ and $|R| \leq w_n/2$, we can choose $Q = \{x\}$ for any $x \in R$, since $|B(Q)| \geq w_n+1 \geq |R \cup S| + w_n/2$. Henceforth, we assume that $|R| >w_n m/2$ and $|R| > w_n/2$.
        
    Let $B(S) = [a_1,b_1] \cup \cdots \cup [a_m,b_m]$ be written as a disjoint union of intervals. As $S \subseteq R \cup S$, then we can write \begin{equation*}
        B(R \cup S) = [a_1',b_1'] \cup \cdots \cup [a_m',b_m'] \cup [c_1,d_1] \cup \cdots \cup [c_k,d_k]
    \end{equation*} as a union of disjoint intervals where $[a_\ell,b_\ell] \subseteq [a_\ell',b_\ell']$. We can assume without loss of generality that $c_\ell -1 \not \in B(R \cup S)$ and $d_\ell + 1 \not \in B(R \cup S)$ for any $\ell \in [k]$. This implies that $\#[c_\ell,d_\ell] \geq w_n+1$, as $B(j,w_n) \subseteq [c_\ell,d_\ell]$ for some $j \in R \cup S$. As $[a_\ell',b_\ell'] \setminus [a_\ell,b_\ell]$ is the disjoint union of at most two intervals, we can rewrite \begin{equation}\label{eqsetminus}
        B(R \cup S) \setminus B(S) = [x_1,y_1]\cup \cdots \cup [x_N,y_N],
    \end{equation} as a disjoint union of intervals, where $N \leq 2m + k$, and $\#[x_\ell,y_\ell] \geq w_n+1$ for all $\ell >2m$. If we have ${|B(S)| \geq |R \cup S| + w_n/2}$, then taking $Q = \emptyset$ suffices to prove Lemma \ref{lemma7}. 
    
    So, let $|B(S)| < |R \cup S|+w_n/2$. Applying Lemma \ref{lemma5}, we get $|B(R \cup S)| \geq \min\{|R \cup S| + w_n/2,n\}$. So, using (\ref{eqsetminus}), define $t \in [N]$ and $y \in [x_t,y_t]$ uniquely such \begin{equation}\label{eqIZY}
        I_{t,y} \coloneq [x_1,y_1]\cup \cdots \cup [x_{t-1},y_{t-1}] \cup [x_t,y] \text{ satisfies } |I_{t,y}| = \min\{|R \cup S| + w_n/2,n\} - |B(S)|.
    \end{equation} Notice that $|B(S)| \geq |S|$ because $S \subseteq B(S)$. As $|R \cup S| \leq |R| + |S|$, this implies that $|I_{t,y}| \leq |R| +w_n/2$. As $\#[x_\ell,y_\ell] \geq w_n$ for $\ell > 2m$, this implies that $t \leq 2m + \ceil{\frac{|R|+w_n/2}{w_n}} \leq \frac{5|R|}{w_n}+2$.

    By the definition of $I_{t,y}$ and (\ref{eqsetminus}), then $I_{t,y} \subseteq B(R)$. We now apply Lemma \ref{lemma8} to each of the intervals in $I_{t,y}$ to find $Q \subseteq R$ such that $I_{t,y} \subseteq B(Q)$ with \begin{equation*}
        |Q| \leq \sum_{\ell=1}^{t-1} \left(4 + \frac{\#[x_\ell,y_\ell]}{w_n}\right)\nonumber + \left(4 + \frac{\#[x_t,y]}{w_n}\right) \leq 4t + \frac{|I_{t,y}|}{w_n} \leq 4\left(\frac{5|R|}{w_n}+2\right) + \frac{|R|+w_n/2}{w_n} \leq \frac{21|R|}{w_n} +9.
    \end{equation*} 
    As $|R| > w_n/2$, we have $|Q| \leq \frac{40|R|}{w_n}$. As $I_{t,y} \cap B(S) = \emptyset$, (\ref{eqIZY}) yields ${|I_{t,y} \cup B(S)| =  \min\{|R \cup S| + w_n/2,n\}}$. As $I_{t,y} \cup B(S) \subseteq B(Q \cup S)$, we are done.
\end{proof}

\subsection{Bounding Nonproper Robust Images.}
We begin with a simple definition, which is about the value of $F(V_{\setminus \tau_j})$. One should think of this as being about the image of $F$ for $M_j$, rather than the robust image.

\begin{definition}\label{defRk}
    Let $1 \leq k \leq s$. Define the region \[T_k \coloneq \{j \in [n]: F(V_{\setminus \tau_j}) \subseteq H_k\}.\]
\end{definition}

By Definition \ref{defrobustimage}, $T_k$ contains all $j$ such that $F$ has nonproper robust image $H_k$ for $M_j$. This implies that $|T_k| \geq n_k-r$. Note that the data of $T_1,\ldots,T_s$ is equivalent to the data of $F(V_{\setminus \tau_j})$ for all $j \in [n]$. It turns out that there are not that many possibilities for $T_k$, since we will see that $T_k$ splits into intervals which are each bookended by many columns with proper robust image.

\begin{lemma}\label{lemmaregions}
    Let $F \in \Sur_{\{n_k\},r}(V,G)$ and let $1 \leq k \leq s$. There are at most $n^{\frac{8r}{\log(n)^{\lambda}}}$ possibilities for $T_k$. 
\end{lemma}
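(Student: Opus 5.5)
The plan is to describe $T_k$ as an "eroded" set of rows, show that each boundary of a maximal interval of $T_k$ forces a strip of about $\log(n)^{\lambda}$ columns with proper robust image, and conclude that $T_k$ has few intervals. Throughout, $A=I_n$ and $V_{\setminus\tau_j}=\langle e_i: i\in B(j,w_n)\rangle$. Put $Z_k\coloneq\{i\in[n]: F(e_i)\in H_k\}$. Then
\[T_k=\{j\in[n]: B(j,w_n)\subseteq Z_k\},\]
and I would use this description throughout. Write $T_k=[a_1,b_1]\cup\cdots\cup[a_m,b_m]$ with $m=m(T_k)$. Any such set is determined by its $2m$ endpoints, so there are at most $n^{2m}$ possibilities for $T_k$ once $m$ is fixed, and at most $n^{2m+1}$ in total. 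The whole task is therefore to show that $m$ is at most a constant times $r/\log(n)^{\lambda}$, and that $m=0$ when $r$ is small.

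\textbf{Key step: boundary strips.} Let $L\coloneq\lceil\log(n)^{\lambda}\rceil$. Suppose $[a,b]$ is a maximal interval of $T_k$ with $b<n$. Since $b\in T_k$ but $b+1\notin T_k$, the row $i_0\coloneq b+1+w_n$ lies in $[n]$ and satisfies $F(e_{i_0})\notin H_k$.

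Now take $j'\in[b+1,\min(b+L-1,n)]$. The rows of $B(j',w_n)$ outside $Z_k$ all lie in $\sigma\coloneq[b+w_n+1,j'+w_n]$, because the rest of the ball sits inside $B(b,w_n)\subseteq Z_k$. This $\sigma$ has $|\sigma|<L\le\log(n)^{\lambda}$, and $i_0\in\sigma$. Removing $\sigma$ gives
\[F(V_{\setminus\sigma\cup\tau_{j'}})\subseteq H_k\cap F(V_{\setminus\tau_{j'}}),\]
and this is a proper subgroup of $F(V_{\setminus\tau_{j'}})$ because $F(e_{i_0})\notin H_k$. Hence the depth of $F$ for $M_{j'}$ is strictly larger than $[G:F(V_{\setminus\tau_{j'}})]$.

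Suppose $F$ had nonproper robust image $H'$ for $M_{j'}$. Then $H'=F(V_{\setminus\tau_{j'}})$, since always $H'\subseteq F(V_{\setminus\tau_{j'}})$. By Definition~\ref{defrobustimage} the depth would equal $[G:H']=[G:F(V_{\setminus\tau_{j'}})]$, contradicting the previous paragraph. So $F$ has proper robust image for every column in the strip $(b,b+L)$. The symmetric argument with $a>1$ gives the strip $(a-L,a)$.

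\textbf{Counting.} Strips lie in the complement of $T_k$: each such $j'$ has $i_0\in B(j',w_n)$, so $j'\notin T_k$. Right strips of distinct intervals are disjoint, since each lies in the gap before the next interval. If $T_k$ is neither $\emptyset$ nor $[n]$, then $m-1$ intervals have $b<n$, plus one more interval (with $a>1$ or $b<n$) contributes a strip. This gives
\[r\ge \max(m-1,1)\,(L-1),\]
and hence $m\le 2r/(L-1)$ and $n^{2m+1}\le n^{8r/\log(n)^{\lambda}}$ for $n$ large.

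If instead $T_k\in\{\emptyset,[n]\}$, then $T_k$ is actually forced. We have $T_k=[n]$ iff $Z_k=[n]$, iff $F(V)\subseteq H_k$, iff $H_k=G$ by surjectivity of $F$. So there is exactly one possibility, matching the bound $n^{0}=1$ when $r=0$.

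I expect the main obstacle to be the key step: correctly identifying that "nonproper" forces the depth to equal $[G:F(V_{\setminus\tau_{j'}})]$, and checking that the removed set $\sigma$ is small enough, i.e. $|\sigma|<L\le D\log(n)^{\lambda}$ with $D\ge1$. The edge cases near $1$ and $n$, where balls are truncated, also need care.
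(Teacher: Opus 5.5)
Your proposal is correct and matches the paper's proof. Both arguments characterize $T_k$ through the balls $B(j,w_n)$. Both show that each maximal interval ending at $b<n$ is followed by roughly $\log(n)^{\lambda}$ columns with proper robust image, using the removed set $\sigma=[b+w_n+1,j'+w_n]$. Both then conclude $r\ge\max(m-1,1)\cdot(\text{strip length})$.

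Two small differences from the paper:
\begin{itemize}
\item You prove properness by noting that the depth strictly exceeds $[G:F(V_{\setminus\tau_{j'}})]$. This is a slightly more self-contained route than the paper's appeal to uniqueness of robust images.
\item You write $|\sigma|<L\le\log(n)^{\lambda}$, but $L=\lceil\log(n)^{\lambda}\rceil\ge\log(n)^{\lambda}$. The bound you actually need and have is $|\sigma|\le L-1<\log(n)^{\lambda}$.
\end{itemize}
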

\begin{proof}
    With $k = s$ then $T_k = [n]$ and this is trivial. So, assume $k < s$ and hence $H_k<G$ is a proper subgroup. As $F: V \to G$ is surjective, $T_k \subsetneq [n]$.

    Let $T_k \subseteq [n]$ be split into intervals $T_k = [a_1,b_1] \cup [a_2,b_2] \cup \cdots \cup [a_m,b_m]$ with $m = m(T_k)$, so that $a_{\ell} - 1 \not \in T_k$ and  $b_\ell + 1 \not \in T_k$ for all $\ell \in [m]$. Assume without loss of generality that $a_1 \geq 1$ and $b_m \leq n$ and $b_\ell \leq a_{\ell+1}$ for all $\ell \in [m-1]$. It suffices to show that $m \leq \frac{4r}{\log(n)^{\lambda}}$, since there are at most $n$ choices for each of $a_\ell$ and $b_\ell$.

    By (\ref{eqball}), \[V_{\setminus \tau_j} = \langle v_i : i \in [n] \setminus \tau_j\rangle = \langle v_i: i \in B(j,w_n)\rangle.\] So, $j \in T_k$ if and only if $F(v_i) \in H_k$ for all $i \in B(j,w_n)$. Let $b \coloneq b_\ell < n$ for some $\ell \in [m]$. As $b \in T_k$ and $b + 1\not \in T_k$, there exists $i \in (B(b+1,w_n) \setminus B(b,w_n))$ such that $F(v_i) \not \in H_k$. This set is either empty or equal to $\{b + 1 + w_n\}$, so therefore $b + 1 + w_n \leq n$ and $F(v_{b + 1 + w_n}) \not \in H_k$.

    Choose $c \in [b+1,b+\floor{\log(n)^{\lambda}}-1]$. Let \[\sigma_c \coloneq B(c,w_n) \setminus B(b,w_n) = [b+1+w_n,c+w_n],\] so $|\sigma_c|\leq \floor{\log(n)^{\lambda}}-1$. As $b \in T_k$, then $F(V_{\setminus \tau_b}) \subseteq H_k$. As $b+1+w_n \in B(c,w_n)$, then $F(V_{\setminus \tau_c}) \not \subseteq H_k$, so $c \not \in T_k$. Then \[F(V_{\setminus \sigma_c \cup \tau_c})\subseteq F(V_{\setminus \tau_b}) \subseteq H_k.\] Therefore, $F(V_{\setminus \sigma_c \cup \tau_c}) \subsetneq F(V_{\setminus \tau_c})$. By Definition \ref{defrobustimage} and the uniqueness of robust images from Remark \ref{robustimageunique}, this implies that $F$ has proper robust image for $M_c$, since its robust image will be contained in $F(V_{\setminus \sigma_c \cup \tau_c})$.

    So, for all $\ell \in [m]$ where $b_\ell <n$, then $F$ has proper robust image for $M_c$ for all $c \in [b_\ell +1, b_\ell+\floor{\log(n)^{\lambda}}-1]$, and $c \not \in T_k$. As $b + 1 + w_n \leq n$ and $\log(n)^\lambda < w_n$, each interval $[a_\ell,b_\ell]$ where $b_\ell < n$ is followed by \[\#[b_\ell +1, b_\ell+\floor{\log(n)^{\lambda}}-1] = \floor{\log(n)^{\lambda}}-1 > \frac{\log(n)^{\lambda}}{2}\] values $c \in [n]$ such that $F$ has proper robust image for $M_c$. There are at least $m-1$ intervals $[a_\ell,b_\ell]$ where $b_\ell < n$. If $m = 1$ and $b_1 = n$, then $a_1 > 1$ because $T_k \subsetneq [n]$. By similar logic, the interval $[a_1,b_1]$ is preceded by at least $\frac{\log(n)^{\lambda}}{2}$ values $c \in [n]$ such that $F$ has proper robust image for $M_c$.
    
    As there are a total of $r$ elements $c \in [n]$ where $F$ has proper robust image for $M_c$, this means that \[r \geq (\max\{m-1,1\})\frac{\log(n)^{\lambda}}{2} \geq \frac{m}{2} \cdot \frac{\log(n)^{\lambda}}{2}.\] Therefore $m \leq \frac{4r}{\log(n)^\lambda}$, and the lemma follows.
\end{proof}

\subsection{Bounding \texorpdfstring{$\#\Sur_{\{n_k\},r}(V,G)$}{the number of surjections}}
At last, we are ready to bound $\#\Sur_{\{n_k\},r}(V,G)$, using all of our work in this section until now. We first use Lemma \ref{lemmaregions} to fix $T_k$ for all $1 \leq k \leq s$. Then, Lemma~\ref{lemma7} allows us to use only a small fraction of the columns with nonproper robust images. Then, Claim \ref{claimlemma6} describes how we will apply Lemma \ref{lemmarobustimage}. Combining Claim \ref{claimlemma6} with our usage of Lemma \ref{lemma7} allows us to prove Lemma \ref{lemmabandcount}.

\begin{proof}[Proof of Lemma \ref{lemmabandcount}]
    We originally labeled the subgroups of $G$ as $H_1,H_2,\ldots,H_s$ arbitrarily such that ${|H_k| \leq |H_{k+1}|}$ for all $1\leq k\leq s-1$. We now modify this ordering, by requiring that if $|H_k| = |H_{k+1}|$, then $n_k \geq n_{k+1}$. In other words, for each $D \mid G$, we reorder all the subgroups $H_k$ of index $D$ so that they are labeled in decreasing order of $n_k$. This will be useful when $r < w_n$.
    
    If $n_s = n$, the lemma holds because $\#\Sur(V,G) \leq |G|^n$. So, we assume that $n_s < n$. Throughout the proof, we will often let $n$ be sufficiently large, as the constant $K$ can be made large enough so that the lemma holds for smaller values of $n$.

    By Lemma \ref{lemmaregions}, \[\#\{\text{possibilities for $T_1,\ldots,T_s$}\} \leq \left(n^{\frac{8r}{\log(n)^{\lambda}}}\right)^s = \exp\left(\frac{8rs \log(n)}{\log(n)^{\lambda}}\right) = \exp\left(\frac{8rs}{\log(n)^{\delta/2}}\right). \] So, throughout the proof, fix $T_1,\ldots,T_s$ by absorbing this factor into $C$.

    Let $1 \leq k \leq s$. Define \[R_k \coloneq \{j \in [n]: F\text{ has proper robust image $H_k$ for $M_j$}\}.\] By Definitions \ref{defrobustimage} and \ref{defRk}, $R_k \cup T_k$ contains all $j$ such that $F$ has robust image $H_k$ for $M_j$, so $|R_k \cup T_k| \geq n_k$.  Let $\hat{R}_k \coloneq R_1\cup\cdots\cup R_k$ and $\hat{T}_k \coloneq T_1\cup\cdots\cup T_k$. Then, ${|\hat{R}_k \cup \hat{T}_k| \geq n_1 + \cdots + n_k.}$ Definition \ref{defstratify} implies that $|\hat{R}_k| \leq r$.
    
    We now apply Lemma \ref{lemma7} repeatedly. Let $\kappa \geq 1$ be the minimum number such that $n_{\kappa} > 0$. (Notice that $\kappa$ is the same as $k_1$ from Definition \ref{defkey}.) This implies that $R_\kappa \cup T_\kappa \neq \emptyset$ and $R_k \cup T_k = \emptyset$ if $1 \leq k < \kappa$. Let $S_{\kappa-1} = \emptyset$. For $\kappa \leq k \leq s$, we apply Lemma~\ref{lemma7} with $R = \hat R_k$ and $S = S_{k-1} \cup T_k$, which are not both empty, to choose $Q = Q_k \subseteq \hat R_k$. Letting $S_k = S_{k-1} \cup T_k\cup Q_k$, Lemma~\ref{lemma7} gives that, for $\kappa \leq k\leq s$,
    \begin{equation}\label{eqlemma7}|B(S_k)| \geq \min \{|S_{k-1} \cup T_k \cup \hat R_k|+w_n/2,n\} \text{ and } |Q_k| \leq \begin{cases} 
        \frac{40|\hat R_k|}{w_n} &\text{ if } S_{k-1} \cup T_k \neq \emptyset \\
            \max \left\{\frac{40|\hat R_k|}{w_n},1\right\}&\text{ if } S_{k-1} \cup T_k = \emptyset.
        \end{cases}\end{equation}
    Let $\kappa \leq k \leq s$. As $\hat{R}_k \cup \hat{T}_k \subseteq S_{k-1} \cup T_k \cup \hat R_k$, we have that $|B(S_k)| \geq \min \{n_\kappa + \cdots + n_k+w_n/2,n\}$. For $\kappa-1 \leq k \leq s$, define ${V_k \coloneq \langle v_i : i \in B(S_k)\rangle}$. Then, $V_{\kappa - 1} = \{0\}$ and for $\kappa \leq k \leq s$, we have ${\dim V_k \geq \min \{n_\kappa + \cdots + n_k+w_n/2,n\}}$. In particular, $V_s = V$.
    
    We now show that we can apply Lemma \ref{lemmarobustimage} to bound the number of possibilities for $F|_{V_k}$ in terms of the number of possibilities for $F|_{V_{k-1}}$.
        
    \begin{claim}\label{claimlemma6}
        As above, let $\kappa \leq k \leq s$, let $S_k = S_{k-1} \cup T_k \cup Q_k$ where $Q_k \subseteq \hat R_k$, and let ${V_k \coloneq \langle v_i : i \in B(S_k)\rangle}$. Then, for $n$ sufficiently large, \[\#\{\text{possibilities for $F|_{V_k}\}$} \leq \#\{\text{possibilities for $F|_{V_{k-1}}$}\} |H_k|^{|B(S_k)|-|B(S_{k-1})|} \exp\left(|Q_k|\log(n)^{1 + 3\delta/4}\right).\]
    \end{claim}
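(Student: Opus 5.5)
We need to bound the number of extensions of $F|_{V_{k-1}}$ to $F|_{V_k}$. Since all submodules here are spanned by standard basis vectors, $F|_{V_k}$ is determined by $F|_{V_{k-1}}$ together with the values $F(v_i)$ for $i \in B(S_k)\setminus B(S_{k-1})$. We split these new indices into two groups. The first is $B(T_k)\setminus B(S_{k-1})$. The second is $\bigcup_{q\in Q_k} B(q,w_n)\setminus B(S_{k-1}\cup T_k)$. Note that $B(S_k)=B(S_{k-1})\cup B(T_k)\cup B(Q_k)$.

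For the first group, with $T_k$ already fixed, Definition~\ref{defRk} says that $j\in T_k$ forces $F(V_{\setminus\tau_j})\subseteq H_k$. So $F(v_i)\in H_k$ for every $i\in B(T_k)$, and each such index has at most $|H_k|$ choices. There is no error term here.

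For the second group, we process the elements $q\in Q_k$ one at a time, in some fixed order. For each $q$, let $W^{(q)}$ be spanned by the $v_i$ with $i\in B(q,w_n)$ not already handled, meaning $i$ lies outside $B(S_{k-1})$, $B(T_k)$, and the balls of earlier $q$'s. Then $W^{(q)}\subseteq V_{\setminus\tau_q}$ is a summand. Since $q\in\hat R_k=R_1\cup\cdots\cup R_k$, $F$ has robust image $H_{k'}$ for $M_q$ with $k'\le k$, so $|H_{k'}|\le|H_k|$. Lemma~\ref{lemmarobustimage} is a statement about $F|_W$ alone. Because $W^{(q)}$ is spanned by basis vectors disjoint from those already fixed, counting choices of $F|_{W^{(q)}}$ independently gives an upper bound on the number of extensions. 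For each $q$ we sum over the at most $s$ possible values of $k'$. The lemma then gives at most
\[
s\,\bigl((2w_n+1)|G|\bigr)^{|G|\log(n)^{\lambda}}\,|H_k|^{\dim W^{(q)}}
\]
choices.

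Multiplying over both groups, the exponents of $|H_k|$ add up to exactly $|B(S_k)|-|B(S_{k-1})|$, since the sets involved are disjoint. The extra factor per $q$ is
\[
s\exp\Bigl(|G|\log(n)^{1+\delta/2}\log\bigl((2w_n+1)|G|\bigr)\Bigr)=\exp\Bigl(O\bigl(\log(n)^{1+\delta/2}\log\log n\bigr)\Bigr),
\]
using $w_n\le\log(n)^{1+\delta}$. For $n$ large this is at most $\exp(\log(n)^{1+3\delta/4})$, because $\log\log n=o(\log(n)^{\delta/4})$. Taking the product over $q\in Q_k$ gives the claimed factor $\exp(|Q_k|\log(n)^{1+3\delta/4})$.

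The main point to verify carefully is that using Lemma~\ref{lemmarobustimage} with whichever robust image $H_{k'}$ the column $q$ actually has is legitimate, and that it yields the factor $|H_k|$ rather than something larger. This relies on the ordering $|H_{k'}|\le|H_k|$ for $k'\le k$ and on $\hat R_k$ being exactly the union of $R_{k'}$ over $k'\le k$. The remaining steps are routine bookkeeping.
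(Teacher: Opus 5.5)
Your argument is the paper's proof. You split the new indices into $B(T_k)\setminus B(S_{k-1})$, where $F(v_i)\in H_k$ by Definition~\ref{defRk}, and then add the balls $B(j,w_n)$ for $j\in Q_k$ one at a time, applying Lemma~\ref{lemmarobustimage} to each newly added piece with its robust image $H_{k'}$, $k'\le k$, and using $|H_{k'}|\le|H_k|$.

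The only difference is that the paper also charges a factor of $n$ for the choice of each $j\in Q_k$. This choice has to be counted somewhere, because $Q_k\subseteq\hat R_k$ depends on $F$. You omit it by treating $Q_k$ as fixed, but it is harmlessly absorbed into your per-element error factor $\exp(\log(n)^{1+3\delta/4})$.
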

    
    \begin{proof}[Proof of Claim \ref{claimlemma6}]
    By (\ref{eqdefphi}) and Definition \ref{defRk}, if $i \in B(T_k)$ then $i \in B(j,w_n)$ for some $j \in T_k$, so ${F(v_i) \in H_k}$. So there are at most $|H_k|$ choices for $F(v_i)$. Let $V_k^0 \coloneq \langle v_i: i \in B(S_{k-1} \cup T_k)\rangle$. Clearly, $B(S_{k-1}) \subseteq B(S_{k-1}\cup T_k)$. This gives that \begin{equation}\label{eq101}
        \#\{\text{possibilities for $F|_{V_k^0}\}$} \leq \#\{\text{possibilities for $F|_{V_{k-1}}$}\} |H_k|^{|B(S_{k-1}\cup T_k)|-|B(S_{k-1})|}.
    \end{equation} Now, let $Q_k = \{j_1,\ldots,j_q\}$, with $q = |Q_k|$. For $1 \leq \ell \leq q$, we define \[V_k^\ell \coloneq \langle v_i:i\in B(S_{k-1}\cup T_k \cup \{j_1,\ldots,j_{\ell}\})\rangle.\] 
    By definition, $V_k = V_k^q$. Define \[U_k^\ell \coloneq \left\langle v_i: i \in \big(B(j_\ell,w_n) \setminus B(S_{k-1}\cup T_k \cup \{j_1,\ldots,j_{\ell-1}\})\big)\right\rangle \subseteq V_{\setminus \tau_{j_\ell}}.\] 
    Then $V_k^\ell =  V_k^{\ell-1} + U_k^\ell$ and $\dim V_k^\ell = \dim V_k^{\ell-1}+\dim U_k^\ell.$ As $Q_k \subseteq \hat R_k$, let $j_\ell \in R_{k'}$ for $1\leq k' \leq k$, so that $F$ has robust image $H_{k'}$ for $M_{j_\ell}$. There are at most $n$ choices of $j_\ell$. As $k' \leq k$, then $|H_{k'}| \leq |H_k|$ and there are at most $s$ choices of $k'$. By applying Lemma \ref{lemmarobustimage} as $U_k^\ell\subseteq V_{\setminus \tau_{j_\ell}}$ and letting $n$ be sufficiently large, \begin{equation}\label{eq102}
        \#\{\text{possibilities for $F|_{U_k^\ell}$}\}\leq ns\big((n-|\tau_{j_\ell}|)|G|\big)^{|G|\log(n)^{\lambda}}|H_{k'}|^{\dim U_k^\ell} \leq \exp\left(\log(n)^{1+3\delta/4}\right)|H_k|^{\dim V_k^\ell-\dim V_k^{\ell-1}}.
    \end{equation} The second inequality above holds only for $n$ sufficiently large, and uses that $\lambda = 1+\delta/2 < 1 + 3\delta/4$ and $\log(n-|\tau_{j_\ell}|) \leq \log(2w_n+1)$ is less than any positive power of $\log(n)$. As $V_k^\ell =  V_k^{\ell-1} + U_k^\ell$, then $F|_{V_k}$ is determined by $F|_{V_k^0},F|_{U_k^1},\ldots,F|_{U_k^q}$. We now multiply out (\ref{eq101}) and (\ref{eq102}) over all $1 \leq \ell \leq q$ and use that ${\dim V_k^0 = |B(S_{k-1}\cup T_k)|}$ to get \[\#\{\text{possibilities for $F|_{V_k}\}$} \leq \#\{\text{possibilities for $F|_{V_{k-1}}$}\} |H_k|^{\dim V_k - |B(S_{k-1})|} \left(\exp(\log(n)^{1+3\delta/4})\right)^q.\] As $\dim V_k = |B(S_k)|$ and $q = |Q_k|$, this proves Claim \ref{claimlemma6}.
    \end{proof}

    Let $Q \coloneq \sum_{k=\kappa}^s|Q_k|$. As $V_s = V$ and $V_{\kappa-1} = \{0\}$, multiplying out Claim \ref{claimlemma6} for all $\kappa \leq k \leq s$ yields \begin{equation}\label{eqtwoterms}\#\Sur_{\{n_k\},r}(V,G) \leq  \exp\left(Q\log(n)^{1 + 3\delta/4}\right) \cdot \prod_{k = \kappa}^s \left(|H_k|^{|B(S_k)|-|B(S_{k-1})|}\right).\end{equation}
    Let $s' \leq s$ be the minimum value such that $n_\kappa + \cdots + n_{s'}+w_n/2 \geq n$, so $s' \geq \kappa$. We now use that $|B(S_k)| \geq \min \{n_\kappa + \cdots + n_k+w_n/2,n\}$ and $|H_k| \leq |H_{k+1}|$ for all $\kappa \leq k \leq s$ to simplify (\ref{eqtwoterms}): \begin{align}
        \#\Sur_{\{n_k\},r}(V,G) &\leq \exp\left(Q\log(n)^{1 + 3\delta/4}\right) \cdot|H_s|^{|B(S_s)|}\prod_{k = \kappa}^{s-1} \left(\frac{|H_k|}{|H_{k+1}|}\right)^{|B(S_k)|}\nonumber\\
        &\leq \exp\left(Q\log(n)^{1 + 3\delta/4}\right) \cdot|G|^n \prod_{k = \kappa}^{s-1} \left(\frac{|H_k|}{|H_{k+1}|}\right)^{\min\{n_\kappa + \cdots + n_k+w_n/2,n\}}\nonumber \\
        &= \exp\left(Q\log(n)^{1 + 3\delta/4}\right) \cdot|H_{s'}|^{n-(n_\kappa + \cdots + n_{{s'}-1}+w_n/2)}|H_{\kappa}|^{n_\kappa +w_n/2}\prod_{k = \kappa+1}^{{s'}-1} |H_k|^{n_k}\footnotemark\nonumber \\
        &\leq \exp\left(Q\log(n)^{1 + 3\delta/4}\right) \cdot\left(\frac{|H_{\kappa}|}{|H_{s'}|}\right)^{w_n/2}\prod_{k=1}^s |H_k|^{n_k}.\label{eqHsum}
    \end{align}
    \footnotetext{If $\kappa = s'$ this becomes $\exp\left(Q\log(n)^{1 + 3\delta/4}\right)|H_\kappa|^n$, which also satisfies the next inequality.}
    
    By the left inequality of (\ref{eqlemma7}), then $S_k \neq \emptyset$ if $\kappa \leq k \leq s$. Using the right inequality of (\ref{eqlemma7}) and that $|\hat R_k| \leq r$, then $|Q_k| \leq \frac{40r}{w_n}$ if $\kappa < k \leq s$. Adding up the bounds from the right inequality of (\ref{eqlemma7}), we have \begin{equation}\label{eqQ}
        Q = \sum_{k=\kappa}^s|Q_k| \leq \begin{cases}
        \frac{40rs}{w_n} &\text{ if } T_\kappa \neq \emptyset \\
        \frac{40rs}{w_n} + 1 & \text{ if } T_\kappa = \emptyset.
    \end{cases}
    \end{equation}
    If $r \geq w_n = \floor{\log(n)^{1+\delta}}$, plugging (\ref{eqQ}) into (\ref{eqHsum}) and using that $|H_\kappa| \leq |H_{s'}|$ yields that Lemma \ref{lemmabandcount} holds with $C = 41s$. With $r < w_n$ and $T_\kappa \neq \emptyset$, this also yields that Lemma \ref{lemmabandcount} holds with $C = 40s$.

    So, let $r < w_n$ and $T_{\kappa} = \emptyset$. Then $n_{\kappa} \leq | R_{\kappa} \cup T_{\kappa}| = |R_{\kappa}| \leq r<w_n$. By how we ordered the subgroups $H_k$ at the beginning of the proof of Lemma \ref{lemmabandcount}, if $|H_k| = |H_{\kappa}|$ then $n_{\kappa} \geq n_k$. So, there are at most $sn_\kappa \leq sw_n$ columns of depth $[G:H_\kappa]$. As there are at least $n-w_n/2 > sw_n$ columns of depth at least $[G:H_{s'}]$, then $|H_{s'}| > |H_\kappa|$. So $(|H_\kappa|/|H_{s'}|) \leq 1/p$. By (\ref{eqQ}), $Q \leq 41s$. Using that $s\log(n)^{1 + 3\delta/4} \in o(w_n)$, (\ref{eqHsum}) (with $n$ sufficiently large) becomes \[\#\Sur_{\{n_k\},r}(V,G)\leq \exp\left(41s\log(n)^{1 + 3\delta/4}\right) \cdot\left(\frac{1}{p}\right)^{w_n/2}\prod_{k=1}^s |H_k|^{n_k} \leq \prod_{k=1}^s |H_k|^{n_k},\] and the lemma follows.
\end{proof}

\subsection{Putting it all together}

When $r \in \Omega(\log(n)^{\lambda})$, Proposition \ref{mainprop}(b) follows quickly from Lemma \ref{lemmaprobsimple} and Lemma \ref{lemmabandcount}. When $r < \frac{1}{8}\log(n)^{\lambda}$, we use Lemma \ref{lemmaregions} to see that $F$ has proper robust image for all columns on which it is not a code. This means that $F$ has depth 1 for almost all columns of $M$. Using the bounds from Lemma \ref{lemmaprobsimple} and Lemma \ref{lemmanoncodes} yields Proposition~\ref{mainprop}(b).

\begin{proof}[Proof of Proposition \ref{mainprop}(b)]
    Multiplying Lemma \ref{lemmaprobsimple} and Lemma \ref{lemmabandcount} yields \begin{align*}\#\Sur_{\{n_k\},r}(V,G) \max_{F \in \Sur_{\{n_k\},r}(V,G)} \Big(\P(FM = 0)\Big) &\leq K(1-\varepsilon)^r\exp\left(\frac{Cr}{\log (n)^{\delta/4}}\right) \\
    & \leq K \exp\left(r\left(\log(1-\varepsilon) + \frac{C}{\log (n)^{\delta/4}}\right)\right).
    \end{align*}
    As $\log(1-\varepsilon) < 0$, for $n$ sufficiently large the right side of the above is bounded above by $K \exp\left(\frac{r}{2}\log(1-\varepsilon)\right)$. When $r \geq \frac{1}{8}\log(n)^{\lambda}$, this yields Proposition \ref{mainprop}(b).

    If $r < \frac{1}{8}\log(n)^{\lambda}$, then the work in the proof of Lemma~\ref{lemmaregions} gives that $T_k = \emptyset$ for $1 \leq k < s$, as ${m(T_k) \leq \frac{4r}{\log(n)^{\lambda}} < 1}$. This means that $F \in \Sur_{\{n_k\},r}(V,G)$ must have proper robust image for all columns for which it has robust image $H_k$ for $k<s$. So, $n_s \geq n - r$. Lemma \ref{lemmaprobsimple} gives that $\P(FM = 0) \leq K |G|^{r-n}$. As $n_s < n$, then there exists $j \in [n]$ and $k < s$ such that $F$ has robust image $H_k$ for $M_j$. As $[G:H_k] > 1$, Lemma~\ref{lemmanoncodes} gives  that $\#\Sur_{\{n_k\},r}(V,G) \leq K|G|^n\exp(-c\log(n)^{1+\delta})$. Using that $r \in o(\log(n)^{1+\delta})$, \[\#\Sur_{\{n_k\},r}(V,G) \max_{F \in \Sur_{\{n_k\},r}(V,G)} \Big(\P(FM = 0)\Big)  \leq K |G|^{r}\exp\left(-c \log(n)^{1+\delta}\right) \leq K \exp\left(-\frac{c}{2} \log(n)^{1+\delta}\right),\] and the proposition follows.
\end{proof}

\section*{Acknowledgements}

I am deeply grateful to Melanie Matchett Wood for suggesting this topic and advising my whole research process, in particular for her many invaluable suggestions and generous mentorship. I thank Hyungmin Jang, Nathan Kaplan, Jungin Lee, Nikita Lvov, Jiahe Shen, and Roger Van Peski for helpful comments on an earlier draft. This research was partially supported by the MIT Math Department and partially supported by the grant NSF DMS-2140043. ChatGPT noticed that an earlier version of Lemma \ref{lemma7} needed a slightly weaker conclusion if $S = \emptyset$.

\bibliography{bibliography}{}
\bibliographystyle{plain}

\end{document}